	\renewcommand{\geq}{\geqslant}
	\renewcommand{\leq}{\leqslant}
	\renewcommand{\phi}{\varphi}
	\renewcommand{\Im}{\operatorname{Im}\nolimits}
	\renewcommand{\sp}{\mathrm{sp}}
	\providecommand{\corollaryname}{Corollary}
	\providecommand{\definitionname}{Definition}
	\providecommand{\examplename}{Example}
	\providecommand{\lemmaname}{Lemma}
	\providecommand{\notationname}{Notation}
	\providecommand{\propositionname}{Proposition}
	\providecommand{\remarkname}{Remark}
	\providecommand{\theoremname}{Theorem}
	\providecommand{\setupname}{Setup}
	\providecommand{\conjecturename}{Conjecture}
	\providecommand{\questionname}{Question}
	\providecommand{\claimname}{Claim}
	\theoremstyle{plain}
		\newtheorem{thm}{\protect\theoremname}[section] % reset thm numbering for each section
		\newtheorem{thmx}{Theorem}
		\newtheorem{prop}[thm]{\protect\propositionname}
		\newtheorem{lem}[thm]{\protect\lemmaname}
	\theoremstyle{definition}
		\newtheorem{defn}[thm]{\protect\definitionname}
		\newtheorem{example}[thm]{\protect\examplename}
		\newtheorem{setup}[thm]{\setupname}
	\theoremstyle{remark}
		\newtheorem{rem}[thm]{\protect\remarkname}
	\numberwithin{figure}{section}
	\numberwithin{equation}{section}
	\tikzset{commutative diagrams/.cd, 
		mysymbol/.style = {start anchor=center, end anchor = center, draw = none}}
	\newcommand{\BA}{\mathbb{A}}
	\newcommand{\BD}{\mathbb{D}}
	\newcommand{\BE}{\mathbb{E}}
	\newcommand{\BS}{\mathbb{S}}
	\newcommand{\CA}{\mathcal{A}}
	\newcommand{\CB}{\mathcal{B}}
	\newcommand{\CC}{\mathcal{C}}
	\newcommand{\DD}{\mathcal{D}}
	\newcommand{\CI}{\mathcal{I}}
	\newcommand{\CT}{\mathcal{T}}
	\newcommand{\CX}{\mathcal{X}}
	\newcommand{\CY}{\mathcal{Y}}
	\newcommand{\CZ}{\mathcal{Z}}
	\newcommand{\SR}{\mathscr{R}}
		\newcommand{\Ab}{\operatorname{\mathsf{Ab}}\nolimits}
		\newcommand{\add}{\operatorname{\mathsf{add}}\nolimits}
		\newcommand{\ind}{\operatorname{\mathsf{indec}}\nolimits}
		\newcommand{\skel}{\mathrm{skel}}
		\newcommand{\op}{\mathrm{op}}
		\newcommand{\field}{k}
		\newcommand{\sse}{\subseteq}
		\newcommand{\spse}{\supseteq}
		\newcommand{\obj}{\operatorname{obj}\nolimits}
		\newcommand{\Ker}{\operatorname{Ker}\nolimits}
		\newcommand{\Cok}{\operatorname{Coker}\nolimits}
		\newcommand{\iso}{\cong}
		\newcommand{\Hom}{\operatorname{Hom}\nolimits}
		\newcommand{\onto}{\rightarrow\mathrel{\mkern-14mu}\rightarrow}
		\newcommand{\idfunc}[1]{\mathbbm{1}_{#1}}
		\newcommand{\iden}[1]{\tensor[]{\mathrm{id}}{_{#1}}}
		\newcommand{\indxx}[1]{\tensor[]{\operatorname{\mathrm{index}}}{_{#1}}}
		\newcommand{\newindx}[1]{\tensor[]{\operatorname{\mathrm{ind}}}{_{#1}}}
		\newcommand{\Ext}{\operatorname{Ext}\nolimits}
		\newcommand{\fs}{\mathfrak{s}}
		\newcommand{\sus}{\Sigma} % shift/suspension functor notation
		\newcommand{\rmod}[1]{\operatorname{\mathsf{mod}}\nolimits{#1}}
		\newcommand{\rMod}[1]{\operatorname{\mathsf{Mod}}\nolimits{#1}}
		\newcommand{\combul}{\raisebox{0.5pt}{\scalebox{0.6}{\(\bullet\)}}}
	\newcommand{\deff}{\coloneqq}
	\newcommand{\eps}{\varepsilon}
	\newcommand{\lan}{\langle}
	\newcommand{\ran}{\rangle}
	\newcommand{\wt}[1]{\widetilde{#1}}
	\newcommand{\ol}[1]{\overline{#1}}
	\newcommand{\ul}[1]{\underline{#1}}
	\newcommand\restr[2]{{\left.\kern-\nulldelimiterspace#1
						\right|_{#2}}}
	\renewcommand{\andify}{%
		\nxandlist{\unskip, }{\unskip{} \@@and~}{\unskip \penalty-2 \space \@@and~}}
	\renewcommand\author@andify{%
  		\nxandlist {\unskip ,\penalty-1 \space\ignorespaces}%
		{\unskip {} \@@and~}%
		{\unskip \penalty-2 \space \@@and~}
	}
	    \newenvironment{acknowledgements}{%
	    \renewcommand\abstractname{Acknowledgements}
%\ifx\maketitle\relax
%\ClassWarning{\@classname}{Abstract should precede
%\protect\maketitle\space in AMS document classes; reported}%
%\fi
\global\setbox\abstractbox=\vtop \bgroup
\normalfont\Small
\list{}{\labelwidth\z@
\leftmargin3pc \rightmargin\leftmargin
\listparindent\normalparindent \itemindent\z@
\parsep\z@ \@plus\p@

}%
\item[\hskip\labelsep\scshape\abstractname.]%
}{%
\endlist\egroup
\ifx\@setabstract\relax \@setabstracta \fi
}
\def\@setaddresses{\par
    \nobreak \begingroup
    \setstretch{0.5} %% Changes spacing
    \footnotesize
    \def\author##1{\nobreak\addvspace\bigskipamount}%
    \def\\{\unskip, \ignorespaces}%
    \interlinepenalty\@M
    \def\address##1##2{\begingroup
        \par\addvspace\bigskipamount\indent
    \@ifnotempty{##1}{(\ignorespaces##1\unskip) }%
    {\scshape\ignorespaces##2}\par\endgroup}%
    \def\curraddr##1##2{\begingroup
    \@ifnotempty{##2}{\nobreak\indent\curraddrname
    \@ifnotempty{##1}{, \ignorespaces##1\unskip}\/:\space
    ##2\par}\endgroup}%
    \def\email##1##2{\begingroup
    \@ifnotempty{##2}{\nobreak\indent\emailaddrname
    \@ifnotempty{##1}{, \ignorespaces##1\unskip}\/:\space
    \ttfamily##2\par}\endgroup}%
    \def\urladdr##1##2{\begingroup
    \def~{\char'\~}%
    \@ifnotempty{##2}{\nobreak\indent\urladdrname
    \@ifnotempty{##1}{, \ignorespaces##1\unskip}\/:\space
    \ttfamily##2\par}\endgroup}%
    \addresses
    \endgroup
}
\let\oldtocsection=\tocsection
\let\oldtocsubsection=\tocsubsection
\renewcommand{\tocsection}[2]{\vspace*{0pt}\hspace{0em}\oldtocsection{#1}{#2}}
    \renewcommand{\tocsubsection}[2]{\vspace*{0pt}\hspace{21pt}\oldtocsubsection{#1}{#2}}
\let\amph\&
\begin{document}

\title{The index with respect to a rigid subcategory of a triangulated category}
\author[J{\o{}}rgensen]{Peter J{\o{}}rgensen}
	\address{
		Department of Mathematics\\
		Aarhus University\\
%		Ny Munkegade 118\\
		8000 Aarhus C\\
		Denmark
	}
    \email{peter.jorgensen@math.au.dk}

\author[Shah]{Amit Shah}
	\address{
		Department of Mathematics\\
		Aarhus University\\
%		Ny Munkegade 118\\
		8000 Aarhus C\\
		Denmark
	}
    \email{amit.shah@math.au.dk}

\date{\today}

\keywords{%
	Contravariantly finite subcategory, 
	extriangulated category,
	Grothendieck group,
	index,
	\(n\)-cluster tilting subcategory, 
	rigid subcategory, 
	triangulated category}

\subjclass[2020]{%
Primary 16E20; 
Secondary 18E05, 18G80%
}

{\setstretch{1}\begin{abstract}

Palu defined the index with respect to a cluster tilting object in a suitable triangulated category, in order to better understand the Caldero-Chapoton map that exhibits the connection between cluster algebras and representation theory. We push this further by proposing an index with respect to a contravariantly finite, rigid subcategory, and we show this index behaves similarly to the classical index. 

Let \(\CC\) be a skeletally small triangulated category with split idempotents, which is thus an extriangulated category \((\CC,\BE,\fs)\). 
Suppose \(\CX\) is a contravariantly finite, rigid subcategory in \(\CC\). 
We define the index \(\newindx{\CX}(C)\) of an object \(C\in\CC\) with respect to \(\CX\) as the \(\tensor[]{K}{_{0}}\)-class \([C\tensor[]{]}{_{\CX}}\) in Grothendieck group \(\tensor[]{K}{_{0}}(\CC,\tensor[]{\BE}{_{\CX}},\tensor[]{\fs}{_{\CX}})\) of the relative extriangulated category \((\CC,\tensor[]{\BE}{_{\CX}},\tensor[]{\fs}{_{\CX}})\). 
By analogy to the classical case, we give an additivity formula with error term for \(\newindx{\CX}\) on triangles in \(\CC\).

In case \(\CX\) is contained in another suitable subcategory \(\CT\) of \(\CC\), there is a surjection 
\(Q\colon \tensor[]{K}{_{0}}(\CC,\tensor[]{\BE}{_{\CT}},\tensor[]{\fs}{_{\CT}}) \onto \tensor[]{K}{_{0}}(\CC,\tensor[]{\BE}{_{\CX}},\tensor[]{\fs}{_{\CX}})\). Thus, in order to describe \(\tensor[]{K}{_{0}}(\CC,\tensor[]{\BE}{_{\CX}},\tensor[]{\fs}{_{\CX}})\), it suffices to determine \(\tensor[]{K}{_{0}}(\CC,\tensor[]{\BE}{_{\CT}},\tensor[]{\fs}{_{\CT}})\) and \(\Ker Q\). We do this under certain assumptions. 
\end{abstract}}

\maketitle

%%%%%%%%%%%%%%%%%%%%%%%%%%%%%%%%%%%%%%%%%%%%%%%%
%%%%%%%%%%%%%%%%%%%%%%%%%%%%%%%%%%%%%%%%%%%%%%%%

\section{Introduction}
\label{sec:introduction}

Cluster algebras were introduced by Fomin--Zelevinsky in \cite{FominZelevinsky-cluster-algebras-I} and have had an impact not only in mathematical fields, such as combinatorics and geometry, but also in seemingly more distant disciplines, especially particle physics where there is a strong connection to scattering amplitudes. The introduction of the cluster category by Buan--Marsh--Reineke--Reiten--Todorov \cite{BMRRT-cluster-combinatorics} has established a substantial bond between cluster theory and representation theory. Cluster categories give ``categorifications'' of cluster algebras via a certain map introduced by Caldero--Chapoton \cite{CalderoChapoton-cluster-algebras-as-hall-algebras-of-quiver-representations}. In suitable contexts, Palu showed in \cite{Palu-cluster-characters-for-2-calabi-yau-triangulated-categories} that a version proposed by Caldero--Keller in \cite{CalderoKeller-from-triangulated-categories-to-cluster-algebras-I} of the original Caldero-Chapoton map induces a bijection between the indecomposable rigid objects in the cluster category and the cluster variables in the corresponding cluster algebra 
(see also \cite{BuanMarshReitenTodorov-Clusters-and-seeds-in-acyclic-cluster-algebras}, \cite{CalderoKeller-from-triangulated-categories-to-cluster-algebras-II}).
Results of this kind have generated vast interest in the Caldero-Chapoton map in various settings; see e.g.\
\cite{CalderoChapoton-cluster-algebras-as-hall-algebras-of-quiver-representations}, 
\cite{CalderoKeller-from-triangulated-categories-to-cluster-algebras-II}, 
\cite{Demonet-categorification-of-skew-symmetrizable-cluster-algebras}, 
\cite{DerksenWeymanZelevinsky-quivers-with-potentials-and-their-representations-I}, 
\cite{DerksenWeymanZelevinsky-quivers-with-potentials-and-their-representations-II}, 
\cite{FuKeller-on-cluster-algebras-with-coefficients-and-2-calabi-Yau-categories}, 
\cite{HolmJorgensen-generalized-friezes-and-a-modified-caldero-chapoton-map-depending-on-a-rigid-object-1}, 
\cite{HolmJorgensen-generalized-friezes-and-a-modified-caldero-chapoton-map-depending-on-a-rigid-object-2}, 
\cite{JorgensenPalu-a-caldero-chapoton-map-for-infinite-clusters}, 
\cite{Plamondon-cluster-characters-for-cluster-categories-with-infinite-dimensional-morphism-spaces}, 
\cite{ZhouZhu-cluster-algebras-arising-from-cluster-tubes}.

A particularly fruitful aspect of this relationship between cluster theory and representation theory has been the study of cluster tilting objects. In order to better understand the Caldero-Chapoton map, Palu defined the index with respect to a cluster tilting object in a suitable triangulated category. Although initially used as a tool, this index is important and interesting in its own right. 
For example: 
\begin{itemize}[leftmargin=20pt, labelwidth=10pt, labelsep=5pt, noitemsep]

	\item Dehy--Keller \cite{DehyKeller-on-the-combinatorics-of-rigid-objects-in-2-calabi-yau-categories} 
	showed that the index induces a bijection between the set of isomorphism classes of rigid indecomposable objects reachable from a cluster tilting subcategory and the corresponding \textbf{g}-vectors (see \cite[proof of Thm.\ 2.10]{JorgensenYakimov-c-vectors-of-2-calabi-yau-categories-and-borel-subalgebras}); 
	\item Grabowski--Pressland  \cite{Grabowski-graded-cluster-algebras}, \cite{GrabowskiPressland-graded-frobenius-cluster-categories} used the index to define graded cluster categories; and 
	\item J{\o}rgensen \cite{Jorgensen-tropical-friezes-and-the-index-in-higher-homological-algebra} has generalised the index to a higher homological algebra setting, using it to construct higher-dimensional tropical friezes (see also \cite{Guo-on-tropical-friezes-associated-with-dynkin-diagrams}).

\end{itemize}

Our initial setup is very general. Suppose \(\CC\) is a skeletally small, idempotent complete, triangulated category with suspension functor \(\sus\). Suppose \(\CT\sse\CC\) is a \emph{(2-)cluster tilting} subcategory (see Definition~\ref{def:n-cluster-tilting}) and let \(C\in\CC\) be an object. 
Since \(\CT\) is cluster tilting, there is a triangle of the form 
\(\begin{tikzcd}[column sep=0.5cm]
\tensor*[]{T}{_{1}^{C}} \arrow{r}& \tensor*[]{T}{_{0}^{C}} \arrow{r}& C \arrow{r} &\sus \tensor*[]{T}{_{1}^{C}},
\end{tikzcd}\)
with \(\tensor*[]{T}{_{i}^{C}}\in\CT\). 
The \emph{index of \(C\) with respect to \(\CT\)}, in the sense of Palu \cite[Sec.\ 2.1]{Palu-cluster-characters-for-2-calabi-yau-triangulated-categories}, is the element 
\([\tensor*[]{T}{_{0}^{C}}\tensor*[]{]}{_{\CT}^{\sp}}-[\tensor*[]{T}{_{1}^{C}}\tensor*[]{]}{_{\CT}^{\sp}}\) of the split Grothendieck group \(\tensor*[]{K}{_{0}^{\sp}}(\CT)\) of \(\CT\).
Note that the way in which these \(\tensor*[]{T}{_{i}^{C}}\) are found really relies on \(\CT\) being cluster tilting, so there seems to be no immediate way to extend this theory to the more general rigid subcategory case. 

Padrol--Palu--Pilaud--Plamondon recently showed in \cite{PadrolPaluPilaudPlamondon-associahedra-for-finite-type-cluster-algebras-and-minimal-relations-between-g-vectors} that \(\tensor*[]{K}{_{0}^{\sp}}(\CT)\) shows up in another way using the theory of \emph{extriangulated categories} of Nakaoka--Palu \cite{NakaokaPalu-extriangulated-categories-hovey-twin-cotorsion-pairs-and-model-structures}. 
Since \(\CC\) is a triangulated category, we can view it as an extriangulated category \((\CC,\BE,\fs)\) (see Example~\ref{example:triangulated-category-is-extriangulated}). In this notation, \(\BE\colon \tensor[]{\CC}{^{\op}}\times\CC \to \Ab\) is a biadditive functor to the category \(\Ab\) of abelian groups given by 
\(\BE(C,A) = \CC(C,\sus A)\) on objects. Each element \(\gamma \in \BE(C,A)\) is part of a so-called \emph{extriangle} in \((\CC,\BE,\fs)\) of the form  
\(\begin{tikzcd}[column sep=0.5cm]
A \arrow{r}{} & B \arrow{r}{} & C \arrow[dashed]{r}{\gamma} & {},
\end{tikzcd}\)
where 
\(\begin{tikzcd}[column sep=0.5cm]
A \arrow{r}{} & B \arrow{r}{} & C \arrow{r}{\gamma} & \sus A
\end{tikzcd}\)
is a distinguished triangle in \(\CC\). 

In order to recover \(\tensor*[]{K}{_{0}^{\sp}}(\CT)\) as the Grothendieck group of a certain extriangulated category, we need to tweak this structure just described on \(\CC\) using \(\CT\). 
The subcategory \(\CT\) induces a \emph{relative} extriangulated structure on \(\CC\) in the sense of \cite[Sec.\ 3.2]{HerschendLiuNakaoka-n-exangulated-categories-I-definitions-and-fundamental-properties} (see Subsection~\ref{subsec:extriangulated-categories}), which yields an extriangulated category \((\CC,\tensor[]{\BE}{_{\CT}},\tensor[]{\fs}{_{\CT}})\), where
\begin{equation}
\label{eqn:relative-extriangulated-structure-wrt-T-in-intro}
\tensor[]{\BE}{_{\CT}}(C,A)
	= \Set{ \gamma\in\BE(C,A) | 
	 	  \gamma \circ \xi = 0 \text{ for all } \xi\colon T \to C \text{ with } T\in\CT
		  }. 
\end{equation}
That is, the extriangles in \((\CC,\tensor[]{\BE}{_{\CT}},\tensor[]{\fs}{_{\CT}})\) are of the form 
\(\begin{tikzcd}[column sep=0.5cm]
A \arrow{r}{} & B \arrow{r}{} & C \arrow[dashed]{r}{\gamma} & {},
\end{tikzcd}\)
where again 
\(\begin{tikzcd}[column sep=0.5cm]
\hspace{-5pt}A \arrow{r}{} & B \arrow{r}{} & C \arrow{r}{\gamma} & \sus A
\end{tikzcd}\)
is a triangle in \(\CC\) but now where we demand that \(\restr{\CC(-,\gamma)}{\CT} = 0\). 
The \emph{Grothendieck group of \((\CC,\tensor[]{\BE}{_{\CT}},\tensor[]{\fs}{_{\CT}})\)} is defined to be 
\begin{equation}
\label{eqn:grothendieck-group-of-cluster-tilting-subcategory-in-intro}
\tensor[]{K}{_{0}}(\CC,\tensor[]{\BE}{_{\CT}},\tensor[]{\fs}{_{\CT}}) = \tensor*[]{K}{_{0}^{\sp}}(\CC)   /   \tensor[]{\CI}{_{\CT}}
\end{equation}
where 
\begin{equation}
\label{eqn:relations-for-grothendieck-group-of-cluster-tilting-subcategory-in-intro}
\tensor[]{\CI}{_{\CT}}  \deff  
	\Braket { [A\tensor*[]{]}{_{\CC}^{\sp}}-[B\tensor*[]{]}{_{\CC}^{\sp}}+[C\tensor*[]{]}{_{\CC}^{\sp}} | 
		 \begin{aligned}
			&\hspace{-5pt}\begin{tikzcd}[column sep=0.5cm,ampersand replacement=\&]
			A \arrow{r}{} \& B \arrow{r}{} \& C \arrow[dashed]{r}{\gamma} \& {}
			\end{tikzcd}\text{is an}\\[-7pt]
			&\text{extriangle in } (\CC,\tensor[]{\BE}{_{\CT}},\tensor[]{\fs}{_{\CT}})
		\end{aligned}
		};
\end{equation}
see \cite{Haugland-the-grothendieck-group-of-an-n-exangulated-category}, 
\cite{ZhuZhuang-grothendieck-groups-in-extriangulated-categories}, 
or Definition~\ref{def:Grothendieck-group-of-extriangulated-category} and Remark~\ref{rem:grothendieck-group-of-relative-structure}. 
For \(C\in\CC\), we denote its class in \(\tensor[]{K}{_{0}}(\CC,\tensor[]{\BE}{_{\CT}},\tensor[]{\fs}{_{\CT}})\) by \([C\tensor[]{]}{_{\CT}}\).
It then follows from \cite[Prop.\ 4.11]{PadrolPaluPilaudPlamondon-associahedra-for-finite-type-cluster-algebras-and-minimal-relations-between-g-vectors} that there is an isomorphism 
\begin{equation}
\label{eqn:isomorphism-of-grothendieck-groups-for-cluster-tilting-T}
\begin{aligned}[t]
\tensor[]{K}{_{0}}(\CC,\tensor[]{\BE}{_{\CT}},\tensor[]{\fs}{_{\CT}}) 		&\overset{\iso}{\longrightarrow}		\tensor*[]{K}{_{0}^{\sp}}(\CT) \\[5pt]
[C\tensor[]{]}{_{\CT}}					&\longmapsto					[\tensor*[]{T}{_{0}^{C}}\tensor*[]{]}{_{\CT}^{\sp}}-[\tensor*[]{T}{_{1}^{C}}\tensor*[]{]}{_{\CT}^{\sp}}  \\[5pt]
[T\tensor[]{]}{_{\CT}}					&\longmapsfrom				[T\tensor*[]{]}{_{\CT}^{\sp}}
\end{aligned}
\end{equation} 
induced by the index of Palu. 
We prove an \(n\)-cluster tilting analogue of this isomorphism 
in Section~\ref{sec:descriptions-of-K0CXEXsX}; see Theorem~\ref{thmx:analogue-of-PPPP-Prop-4-11} below. 

Since the index is a powerful invariant, the isomorphism of Padrol--Palu--Pilaud--Plamon\-don is a striking result that demonstrates the significance of extriangulated categories: 
by transporting along the isomorphism \eqref{eqn:isomorphism-of-grothendieck-groups-for-cluster-tilting-T}, we can think of the equivalence class \([C\tensor[]{]}{_{\CT}}\in \tensor[]{K}{_{0}}(\CC,\tensor[]{\BE}{_{\CT}},\tensor[]{\fs}{_{\CT}})\) also as an index of \(C\) with respect to \(\CT\). Moreover, rather than needing to pass to the split Grothendieck group \(\tensor*[]{K}{_{0}^{\sp}}(\CT)\) of \(\CT\), we can instead adjust the extriangulated structure on \(\CC\) itself. This is something that would be impossible if we were to stay strictly in the realm of triangulated categories.

We push this perspective further, using that any subcategory can induce an extriangulated structure on \(\CC\). In particular, suppose \(\CX\) is a suitable contravariantly finite, rigid subcategory (see Setup~\ref{setup:1}), and consider the extriangulated category \((\CC,\tensor[]{\BE}{_{\CX}},\tensor[]{\fs}{_{\CX}})\) and its Grothendieck group \(\tensor[]{K}{_{0}}(\CC,\tensor[]{\BE}{_{\CX}},\tensor[]{\fs}{_{\CX}})\). 
Note that 
\eqref{eqn:relative-extriangulated-structure-wrt-T-in-intro}, \eqref{eqn:grothendieck-group-of-cluster-tilting-subcategory-in-intro} and \eqref{eqn:relations-for-grothendieck-group-of-cluster-tilting-subcategory-in-intro} above do not use any special properties of \(\CT\), so 
\((\CC,\tensor[]{\BE}{_{\CX}},\tensor[]{\fs}{_{\CX}})\), \(\tensor[]{\CI}{_{\CX}}\) and \(\tensor[]{K}{_{0}}(\CC,\tensor[]{\BE}{_{\CX}},\tensor[]{\fs}{_{\CX}})\) are all defined similarly. 
For an object \(C\in\CC\) consider its equivalence class \([C\tensor[]{]}{_{\CX}}\in \tensor[]{K}{_{0}}(\CC,\tensor[]{\BE}{_{\CX}},\tensor[]{\fs}{_{\CX}})\). 
We show that this assignment behaves 
like 
the classical index with respect to a cluster tilting subcategory. 
Hence, we may consider 
\(\newindx{\CX}(-) \deff [-\tensor[]{]}{_{\CX}}\) 
to be the \emph{index with respect to \(\CX\)}; see Definition~\ref{def:relative-index-wrt-X}.
In particular, we show that \(\newindx{\CX}\) is additive on triangles in \(\CC\) up to an error term; see Theorem~\ref{thmx:X-index-additive-on-triangles-with-error-term} below, or Proposition~\ref{prop:definition-of-theta-sub-CX} and Theorem~\ref{thm:X-index-additive-on-triangles-with-error-term}. 
We denote by \(\rmod{\CX}\) the category of all finitely presented, additive functors \(\tensor[]{\CX}{^{\op}}\to \Ab\); see Subsection~\ref{subsec:conventions-notation}\ref{item:functor-categories}. 

\begin{thmx}
\label{thmx:X-index-additive-on-triangles-with-error-term}
There is a group homomorphism \(\tensor[]{\theta}{_{\CX}} \colon \tensor[]{K}{_{0}}(\rmod{\CX}) \to \tensor[]{K}{_{0}}(\CC,\tensor[]{\BE}{_{\CX}},\tensor[]{\fs}{_{\CX}})\), such that if 
\(\begin{tikzcd}[column sep=0.5cm]
	A \arrow{r}{}& B \arrow{r}{}& C \arrow{r}{\gamma}& \sus A
\end{tikzcd}\) 
is a triangle in \(\CC\), then 
\[
\newindx{\CX}(A) - \newindx{\CX}(B) + \newindx{\CX}(C) 
	=  \tensor[]{\theta}{_{\CX}}\big( [\Im ( \restr{\CC(-,\gamma)}{\CX} )  ]\big).
\]
\end{thmx}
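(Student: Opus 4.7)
My plan has three parts: construct $\theta_\CX$, reduce the formula to a key identity via $\CX$-covers and a homotopy pullback, and close that identity with an octahedron.

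First, for any $F\in\rmod\CX$, choose a projective presentation $\CX(-,X_1)\to\CX(-,X_0)\to F\to 0$ (which exists by contravariant finiteness of $\CX$) and set $\theta_\CX([F])\deff[X_0]_\CX-[X_1]_\CX\in K_0(\CC,\BE_\CX,\fs_\CX)$. Well-definedness on isomorphism classes follows from a Schanuel-type comparison of two presentations via a common refinement, and additivity on short exact sequences in $\rmod\CX$ follows from the horseshoe lemma applied to projective presentations of $F'$ and $F''$.

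Second, given the triangle $A\xrightarrow{\alpha}B\xrightarrow{\beta}C\xrightarrow{\gamma}\sus A$, take an $\CX$-cover $p\colon X_C\to C$ and complete to a triangle $M\to X_C\xrightarrow{p}C\xrightarrow{\delta}\sus M$. The approximation property of $p$ forces $\delta\in\BE_\CX(C,M)$, so this is an $\BE_\CX$-extriangle, yielding $[C]_\CX=[X_C]_\CX-[M]_\CX$. The homotopy pullback $P$ of $\beta$ and $p$ produces two triangles,
\[
A\to P\xrightarrow{\pi}X_C\xrightarrow{\gamma p}\sus A \qquad\text{and}\qquad M\to P\to B\xrightarrow{\delta\beta}\sus M,
\]
the second being an $\BE_\CX$-extriangle because $\delta\beta$ factors through $\delta$. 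Thus $\newindx\CX(A)-\newindx\CX(B)+\newindx\CX(C)=[A]_\CX+[X_C]_\CX-[P]_\CX$. Applying $\CC(-,-)|_\CX$ to the first pullback triangle identifies $\Im(\CC(-,\gamma)|_\CX)$ with $\Cok(\pi_*)$ in $\rmod\CX$. Taking an $\CX$-cover $q\colon X_P\to P$ and setting $s\deff\pi q$ yields the projective presentation $\CX(-,X_P)\xrightarrow{s_*}\CX(-,X_C)\to\Im(\CC(-,\gamma)|_\CX)\to 0$, so $\theta_\CX([\Im(\CC(-,\gamma)|_\CX)])=[X_C]_\CX-[X_P]_\CX$. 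The problem thus reduces to showing $[A]_\CX+[X_P]_\CX=[P]_\CX$.

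Third, apply the octahedron axiom to $X_P\xrightarrow{q}P\xrightarrow{\pi}X_C$: combined with the cocone triangles $K_P\to X_P\to P\to\sus K_P$ (of $q$) and $L\to X_P\xrightarrow{s}X_C\to\sus L$ (of $s$), it produces a new triangle $K_P\to L\to A\to\sus K_P$ whose connecting morphism $A\to\sus K_P$ factors as $A\to P\to\sus K_P$. Since $P\to\sus K_P$ is an $\BE_\CX$-extension by the cover property of $q$, this new triangle is itself an $\BE_\CX$-extriangle, giving $[L]_\CX=[A]_\CX+[K_P]_\CX$. Combined with $[P]_\CX=[X_P]_\CX-[K_P]_\CX$ from the cover triangle of $P$, the desired identity reduces to $[L]_\CX=0$.

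The main obstacle is this final vanishing. Note that because both $X_P,X_C\in\CX$, applying $\CC(-,-)|_\CX$ to the triangle $L\to X_P\xrightarrow{s}X_C\to\sus L$ and using rigidity (so that $\CC(-,\sus X_P)|_\CX=0$) gives $\CC(-,\sus L)|_\CX\iso\Im(\CC(-,\gamma)|_\CX)$. I would then close the argument by iterating the $\CX$-cover construction on $L$: each iteration produces a further $\BE_\CX$-extriangle whose contribution telescopes, and the rigidity hypothesis together with the coherence conditions of the paper's setup should force this process to terminate and the accumulated classes to cancel to zero in $K_0(\CC,\BE_\CX,\fs_\CX)$.
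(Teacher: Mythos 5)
There is a genuine gap, and it is located at the very first step: your definition of \(\tensor[]{\theta}{_{\CX}}\) is not the right homomorphism, and with it the asserted formula is false. Concretely, let \(\CC\) be the cluster category of type \(\mathbb{A}_1\), with indecomposables \(T\) and \(\sus T\), \(\tensor*[]{\sus}{^{2}}\iso\operatorname{id}\), and \(\CX=\add T\). Then \(\tensor[]{K}{_{0}}(\CC,\tensor[]{\BE}{_{\CX}},\tensor[]{\fs}{_{\CX}})\iso\tensor*[]{K}{_{0}^{\sp}}(\CX)\iso\BZ\) with \([T]_{\CX}\mapsto 1\), and the triangle \(T\to 0\to\sus T\xrightarrow{1}\sus T\) is an \(\tensor[]{\BE}{_{\CX}}\)-extriangle (as \(\CC(T,\sus T)=0\)), so \([\sus T]_{\CX}=-[T]_{\CX}\mapsto -1\). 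Now apply the theorem to the triangle \(\sus T\to 0\to T\xrightarrow{1} T=\sus\sus T\): the left-hand side is \([\sus T]_{\CX}+[T]_{\CX}=0\), while \(\Im(\restr{\CC(-,1_T)}{\CX})=\CX(-,T)\) has projective presentation \(0\to\CX(-,T)\to\CX(-,T)\to 0\), so your \(\tensor[]{\theta}{_{\CX}}\) returns \([T]_{\CX}-[0]_{\CX}=1\neq 0\). (Your \(\tensor[]{\theta}{_{\CX}}\) is not even well defined independently of the chosen presentation, since \([X]_{\CX}\neq 0\) for \(X\in\CX\); but the point above already kills the minimal-presentation version.) The paper instead defines \(\tensor[]{\theta}{_{\CX}}([\tensor*[]{H}{_{\CX}}(Z)])=\newindx{\CX}(Z)+\newindx{\CX}(\tensor*[]{\sus}{^{-1}}Z)\) for \(Z\in\CX*\sus\CX\); unwinding a presentation triangle \(\tensor[]{X}{_{1}}\to\tensor[]{X}{_{0}}\to Z\to\sus\tensor[]{X}{_{1}}\), this equals \([\tensor[]{X}{_{0}}]_{\CX}-[\tensor[]{X}{_{1}}]_{\CX}+[\tensor*[]{\sus}{^{-1}}Z]_{\CX}\), i.e.\ your formula is missing the ``defect'' term \([\tensor*[]{\sus}{^{-1}}Z]_{\CX}\).

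This missing term is exactly the obstruction you run into at the end. Your reductions in the second and third parts are essentially sound (the cover triangles and the homotopy-pullback triangles are indeed \(\tensor[]{\BE}{_{\CX}}\)-extriangles, and \(\CX(-,\tensor[]{X}{_{P}})\to\CX(-,\tensor[]{X}{_{C}})\to\Im(\restr{\CC(-,\gamma)}{\CX})\to 0\) is a valid presentation), but the residual class \([L]_{\CX}\) you hope to show vanishes is precisely \(-[\tensor*[]{\sus}{^{-1}}Z]_{\CX}\) for the relevant \(Z\in\CX*\sus\CX\), and it is nonzero in general: in the example above one gets \(L\iso\sus T\) and \([L]_{\CX}=-1\). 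No amount of iterating \(\CX\)-covers will make it cancel. To repair the argument you must build this correction term into \(\tensor[]{\theta}{_{\CX}}\) from the start, and then the real work becomes showing that \(Z\mapsto\newindx{\CX}(Z)+\newindx{\CX}(\tensor*[]{\sus}{^{-1}}Z)\) depends only on \(\tensor*[]{H}{_{\CX}}(Z)\) up to isomorphism and is additive on short exact sequences in \(\rmod{\CX}\) --- which the paper does via the localisation \((\CC/[\tensor[]{\CX}{^{\perp_{0}}}]\tensor[]{)}{_{\SR}}\simeq\rmod{\CX}\) and a realisation of short exact sequences by triangles with terms in \(\CX*\sus\CX\).
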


Note that an extriangle
\(\begin{tikzcd}[column sep=0.5cm]
	A \arrow{r}{}& B \arrow{r}{}& C \arrow[dashed]{r}{\gamma}& {}
\end{tikzcd}\)
in \((\CC,\tensor[]{\BE}{_{\CX}},\tensor[]{\fs}{_{\CX}})\) satisfies \(\restr{\CC(-,\gamma)}{\CX} = 0\). 
Thus, it is a special case of 
Theorem~\ref{thmx:X-index-additive-on-triangles-with-error-term} that 
\(\newindx{\CX}\) is \emph{additive on extriangles in \((\CC,\tensor[]{\BE}{_{\CX}},\tensor[]{\fs}{_{\CX}})\)} (see Definition~\ref{def:additive-function-on-extriangulated-category} and Proposition~\ref{lemma7}).

Suppose now that \(\CC\) is a skeletally small, \(\field\)-linear (where \(\field\) is a field), \(\Hom\)-finite, idempotent complete, triangulated category. 
Note that the index with respect to \(\CX\) induces a homomorphism 
\(
\tensor*[]{K}{_{0}^{\sp}}(\CC) \to \tensor[]{K}{_{0}}(\CC,\tensor[]{\BE}{_{\CX}},\tensor[]{\fs}{_{\CX}})
\)
of abelian groups, which we also denote by \(\newindx{\CX}\). 
Another benefit of 
this 
index 
is its compatibility with containments of subcategories. 
Indeed, if \(\CX \sse \CT\), then it follows that \(\tensor[]{\CI}{_{\CT}}\sse\tensor[]{\CI}{_{\CX}}\). There is thus an induced surjective homomorphism 
\(Q \colon \tensor[]{K}{_{0}}(\CC,\tensor[]{\BE}{_{\CT}},\tensor[]{\fs}{_{\CT}}) \onto \tensor[]{K}{_{0}}(\CC,\tensor[]{\BE}{_{\CX}},\tensor[]{\fs}{_{\CX}})\) of abelian groups, such that 
\(\newindx{\CX} = Q \circ \newindx{\CT}\) as abelian group homomorphisms \(\tensor*[]{K}{_{0}^{\sp}}(\CC) \to \tensor[]{K}{_{0}}(\CC,\tensor[]{\BE}{_{\CX}},\tensor[]{\fs}{_{\CX}})\) (see Remark~\ref{remark19}). 
In particular, 
\begin{equation}
\label{eqn:K0X-is-quotient-of-K0T-in-intro}
\tensor[]{K}{_{0}}(\CC,\tensor[]{\BE}{_{\CX}},\tensor[]{\fs}{_{\CX}}) \iso \tensor[]{K}{_{0}}(\CC,\tensor[]{\BE}{_{\CT}},\tensor[]{\fs}{_{\CT}}) / \Ker Q. 
\end{equation}
Thus, to determine \(\tensor[]{K}{_{0}}(\CC,\tensor[]{\BE}{_{\CX}},\tensor[]{\fs}{_{\CX}})\) reduces to giving a description of 
\(\tensor[]{K}{_{0}}(\CC,\tensor[]{\BE}{_{\CT}},\tensor[]{\fs}{_{\CT}})\) and 
\(\Ker Q\). 
In Section~\ref{sec:descriptions-of-K0CXEXsX} we give several presentations of \(\Ker Q\). 
A general description in terms of \(\tensor[]{\theta}{_{\CT}}\) (see Theorem~\ref{thmx:X-index-additive-on-triangles-with-error-term}) is given in Proposition~\ref{prop:KerQ-general-description} assuming only that \(\CX\) and \(\CT\) are additive, contravariantly finite, rigid subcategories closed under isomorphisms and direct summands; and a more explicit presentation using simple modules in \(\rmod{\CT}\) is given in Theorem~\ref{thm:KerQ-T-locally-bounded-has-right-almost-split-maps-description} if \(\CT\) is also locally bounded and admits right almost split morphisms. 

If \(\CT\) is \emph{\(n\)-cluster tilting} (see Definition~\ref{def:n-cluster-tilting}) and locally bounded, and \(\CC\) admits a Serre functor, then Theorem~\ref{thm:Ker-Q-in-terms-of-AR-n-plus-2-angles} describes \(\Ker Q\) in terms of Auslander-Reiten \((n+2)\)-angles in \(\CT\). 
When \(n=2\), and under some more mild assumptions, \(\Ker Q\) is isomorphic to the subgroup \(N \leq \tensor*[]{K}{_{0}^{\sp}}(\CT)\) defined in \cite[Def.\ 2.4]{HolmJorgensen-generalized-friezes-and-a-modified-caldero-chapoton-map-depending-on-a-rigid-object-2} via \eqref{eqn:isomorphism-of-grothendieck-groups-for-cluster-tilting-T} above; see Remark~\ref{remark26}. 
In particular, the quotient \(\tensor*[]{K}{_{0}^{\sp}}(\CT)/N\) was used by Holm--J{\o{}}rgensen to define a \emph{modified Caldero-Chapoton map} in \cite{HolmJorgensen-generalized-friezes-and-a-modified-caldero-chapoton-map-depending-on-a-rigid-object-2}, which generalises the original from \cite{CalderoChapoton-cluster-algebras-as-hall-algebras-of-quiver-representations} in different ways. 
In Remark~\ref{remark26} we also recall how 
\[
\tensor[]{K}{_{0}}(\CC,\tensor[]{\BE}{_{\CX}},\tensor[]{\fs}{_{\CX}}) 
	\iso \tensor[]{K}{_{0}}(\CC,\tensor[]{\BE}{_{\CT}},\tensor[]{\fs}{_{\CT}}) / \Ker Q
	\iso \tensor*[]{K}{_{0}^{\sp}}(\CT)/N
\]
has been used in \cite{JorgensenShah-grothendieck-groups-of-d-exangulated-categories-and-a-modified-CC-map} to define the \emph{\(\CX\)-Caldero-Chapoton map} (see \cite[Def.\ 5.3]{JorgensenShah-grothendieck-groups-of-d-exangulated-categories-and-a-modified-CC-map}), which recovers the construction of Holm--J{\o{}}rgensen. 
These generalised Caldero-Chapoton maps are, under certain conditions, generalised friezes; see 
\cite[Def.\ 3.4]{HolmJorgensen-generalized-friezes-and-a-modified-caldero-chapoton-map-depending-on-a-rigid-object-1} and 
\cite[Thm.\ A]{HolmJorgensen-generalized-friezes-and-a-modified-caldero-chapoton-map-depending-on-a-rigid-object-2}. 
They also recover the combinatorial generalised friezes of \cite{BessenrodtHolmJorgensen-generalized-frieze-pattern-determinants-and-higher-angulations-of-polygons}; see also \cite{CanakciJorgensen-driezes-weak-friezes-and-T-paths}.

Due to the isomorphism \eqref{eqn:isomorphism-of-grothendieck-groups-for-cluster-tilting-T} for a 2-cluster tilting subcategory, it is natural to ask if there is a similar isomorphism for an \(n\)-cluster tilting subcategory \(\CT\), where \(n\geq 2\). 
We answer this positively in Section~\ref{sec:descriptions-of-K0CXEXsX} (see Theorem~\ref{thmx:analogue-of-PPPP-Prop-4-11} below), using the \emph{triangulated index with respect to \(\CT\)} (see \cite[Def.\ 3.3]{Jorgensen-tropical-friezes-and-the-index-in-higher-homological-algebra}, or Definition~\ref{def:triangulated-index-wrt-n-cluster-tilting-subcategory}), which is a generalisation of Palu's index seen above.
This gives a higher analogue of \cite[Prop.\ 4.11]{PadrolPaluPilaudPlamondon-associahedra-for-finite-type-cluster-algebras-and-minimal-relations-between-g-vectors}. 
\begin{thmx}[=Theorem~\ref{thm:analogue-of-PPPP-Prop-4-11}]
\label{thmx:analogue-of-PPPP-Prop-4-11}

If \(\CT\) is \(n\)-cluster tilting, then the triangulated index \(\indxx{\CT}\) of \cite{Jorgensen-tropical-friezes-and-the-index-in-higher-homological-algebra} induces an isomorphism 
\(\Psi \colon \tensor[]{K}{_{0}}(\CC,\tensor[]{\BE}{_{\CT}},\tensor[]{\fs}{_{\CT}}) \overset{\iso}{\longrightarrow}  \tensor*[]{K}{_{0}^{\sp}}(\CT)\)
of abelian groups.
\end{thmx}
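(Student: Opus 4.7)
The plan is to adapt \cite[Prop.\ 4.11]{PadrolPaluPilaudPlamondon-associahedra-for-finite-type-cluster-algebras-and-minimal-relations-between-g-vectors} from the \(2\)-cluster tilting setting to the \(n\)-cluster tilting setting by constructing mutually inverse maps \(\Psi\) and \(\Phi\), with the higher triangulated index \(\indxx{\CT}\) of \cite{Jorgensen-tropical-friezes-and-the-index-in-higher-homological-algebra} replacing Palu's classical index. Define \(\Phi \colon \tensor*[]{K}{_{0}^{\sp}}(\CT) \to \tensor[]{K}{_{0}}(\CC, \tensor[]{\BE}{_{\CT}}, \tensor[]{\fs}{_{\CT}})\) by \([T\tensor*[]{]}{_{\CT}^{\sp}} \mapsto [T\tensor[]{]}{_{\CT}}\); this is well-defined because a split triangle \(T_{1} \to T_{1}\oplus T_{2} \to T_{2} \to 0\) in \(\CT\) is an extriangle in \((\CC,\tensor[]{\BE}{_{\CT}},\tensor[]{\fs}{_{\CT}})\). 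The candidate inverse is \(\Psi([C\tensor[]{]}{_{\CT}}) \deff \indxx{\CT}(C)\).

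Modulo well-definedness of \(\Psi\), the two inverse identities reduce to straightforward computations. For \(T\in\CT\), the trivial \(n\)-cluster tilting resolution (\(\tensor*[]{T}{_{0}^{T}} = T\) and \(\tensor*[]{T}{_{i}^{T}} = 0\) for \(i \geq 1\)) gives \(\indxx{\CT}(T) = [T\tensor*[]{]}{_{\CT}^{\sp}}\), so \(\Psi \circ \Phi = \mathrm{id}\). For the reverse identity \(\Phi \circ \Psi = \mathrm{id}\) (and surjectivity of \(\Phi\)), the \(n\)-cluster tilting hypothesis furnishes for every \(C\in\CC\) a chain of triangles
\[ X_{i+1} \longrightarrow \tensor*[]{T}{_{i}^{C}} \longrightarrow X_{i} \overset{\gamma_{i}}{\longrightarrow} \sus X_{i+1}, \qquad 0 \leq i \leq n-1, \]
with \(X_{0} = C\), \(X_{n} = 0\), each \(\tensor*[]{T}{_{i}^{C}}\in\CT\), and each \(\tensor*[]{T}{_{i}^{C}} \to X_{i}\) a right \(\CT\)-approximation. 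The approximation property is precisely the condition \(\restr{\CC(-,\gamma_{i})}{\CT} = 0\), so every such triangle is an extriangle in \((\CC,\tensor[]{\BE}{_{\CT}},\tensor[]{\fs}{_{\CT}})\) and yields the relation \([X_{i}\tensor[]{]}{_{\CT}} = [\tensor*[]{T}{_{i}^{C}}\tensor[]{]}{_{\CT}} - [X_{i+1}\tensor[]{]}{_{\CT}}\); telescoping gives \([C\tensor[]{]}{_{\CT}} = \sum_{i=0}^{n-1}(-1)^{i}[\tensor*[]{T}{_{i}^{C}}\tensor[]{]}{_{\CT}} = \Phi(\indxx{\CT}(C))\).

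The main obstacle is well-definedness of \(\Psi\), namely showing that \(\indxx{\CT}\) annihilates the defining relations of \(\tensor[]{K}{_{0}}(\CC,\tensor[]{\BE}{_{\CT}},\tensor[]{\fs}{_{\CT}})\): for every triangle \(A \to B \to C \overset{\gamma}{\to} \sus A\) with \(\restr{\CC(-,\gamma)}{\CT} = 0\), one must verify \(\indxx{\CT}(A) - \indxx{\CT}(B) + \indxx{\CT}(C) = 0\) in \(\tensor*[]{K}{_{0}^{\sp}}(\CT)\). I propose a higher horseshoe construction: the hypothesis on \(\gamma\) ensures that every morphism \(T \to C\) with \(T\in\CT\) lifts through \(B \to C\), so combining right \(\CT\)-approximations \(\tensor*[]{T}{_{0}^{A}} \to A\) and \(\tensor*[]{T}{_{0}^{C}} \to C\) yields a right \(\CT\)-approximation \(\tensor*[]{T}{_{0}^{A}} \oplus \tensor*[]{T}{_{0}^{C}} \to B\). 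The octahedral axiom then assembles this into a \(3 \times 2\) diagram of triangles whose cosyzygies fit into a new triangle satisfying the same approximation condition, thanks to the vanishing \(\CC(\CT, \tensor[]{\sus}{^{j}}(-)) = 0\) on the relevant layers built into the \(n\)-cluster tilting axiom. Iterating produces a \(3 \times n\) array in which the middle column is the direct sum of the outer two at each level, so the alternating sum in \(\tensor*[]{K}{_{0}^{\sp}}(\CT)\) collapses to zero. The technical heart of the argument is the inductive control of cosyzygies across the horseshoe, which is exactly what \(n\)-cluster tilting provides.
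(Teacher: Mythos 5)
Your proposal is correct and structurally identical to the paper's proof: both directions are handled by the same pair of mutually inverse homomorphisms ($[T]^{\sp}_{\CT}\mapsto[T]_{\CT}$ one way, $[C]_{\CT}\mapsto\indxx{\CT}(C)$ the other), with $\Phi\circ\Psi=\mathrm{id}$ coming from telescoping the tower of triangles exactly as you describe, and $\Psi\circ\Phi=\mathrm{id}$ from the trivial resolution of an object of $\CT$. The one place you diverge is the well-definedness of the index map on the relative Grothendieck group: the paper simply cites \cite[Lem.\ 3.7]{Jorgensen-tropical-friezes-and-the-index-in-higher-homological-algebra} for the additivity $\indxx{\CT}(A)-\indxx{\CT}(B)+\indxx{\CT}(C)=0$ on triangles with $\tensor*[]{H}{_{\CT}}(\gamma)=0$, whereas you sketch a proof via a higher horseshoe construction. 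Your sketch is the right idea (and is essentially how that lemma is established), but as written it leaves the two genuinely technical points unverified: (i) the inductive claim that the connecting morphisms of the cosyzygy triangles again vanish under $\tensor*[]{H}{_{\CT}}$, which needs the $n$-cluster tilting vanishing on the intermediate objects of the tower, and (ii) the fact that $\indxx{\CT}$, which is defined via \emph{minimal} right $\CT$-approximations, can be computed from the non-minimal resolutions your horseshoe produces. Both are true, but they are precisely the content of the cited lemma, so if you intend to avoid the citation you must supply them; otherwise your argument is complete.
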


In the case when \(\CT\) is locally bounded and \(n\)-cluster tilting, 
combining \eqref{eqn:K0X-is-quotient-of-K0T-in-intro} and Theorem~\ref{thmx:analogue-of-PPPP-Prop-4-11} shows that 
\(\tensor[]{K}{_{0}}(\CC,\tensor[]{\BE}{_{\CX}},\tensor[]{\fs}{_{\CX}})\) 
is isomorphic to a quotient of the split Grothendieck group 
\(\tensor*[]{K}{_{0}^{\sp}}(\CT)\); see Remark~\ref{rem:K0CX-is-quotient-of-split-G-group-of-T}.

This paper is organised as follows. 
In Section~\ref{sec:background} we lay out any conventions we adopt and collect the main background material. 
In Section~\ref{sec:relative-index-wrt-rigid-subcategory} we define the index with respect to a rigid subcategory in a triangulated category and prove Theorem~\ref{thmx:X-index-additive-on-triangles-with-error-term}. 
In Section~\ref{sec:descriptions-of-K0CXEXsX} we 
study 
\(\tensor[]{K}{_{0}}(\CC,\tensor[]{\BE}{_{\CX}}, \tensor[]{\fs}{_{\CX}})\) 
via the canonical surjection 
\(Q \colon \tensor[]{K}{_{0}}(\CC,\tensor[]{\BE}{_{\CT}},\tensor[]{\fs}{_{\CT}}) \onto \tensor[]{K}{_{0}}(\CC,\tensor[]{\BE}{_{\CX}},\tensor[]{\fs}{_{\CX}})\). 
In particular, we establish Theorem~\ref{thmx:analogue-of-PPPP-Prop-4-11} in Subsection~\ref{subsec:relation-to-AR-angles}.

%%%%%%%%%%%%%%%%%%%%%%%%%%%%%%%%%%%%%%%%%%%%%%%%
%%%%%%%%%%%%%%%%%%%%%%%%%%%%%%%%%%%%%%%%%%%%%%%%

\section{Background}
\label{sec:background}

In Subsection~\ref{subsec:conventions-notation} we detail our general conventions and notation. 
In Subsections~\ref{subsec:approximation} and \ref{subsec:localisation} we provide a summary of the theory of approximation and localisation in triangulated categories that we will need in our main sections. 
In Subsection~\ref{subsec:extriangulated-categories} we recall the necessary details about (relative) extriangulated categories that we will use later.

%%%%%%%%%%%%%%%%%%%%%%%%%%%%%%%%%%%%%%%%%%%%%%%%
%%%%%%%%%%%%%%%%%%%%%%%%%%%%%%%%%%%%%%%%%%%%%%%%

\subsection{Conventions and notation}
\label{subsec:conventions-notation}

For the following, suppose \(\CA\) is an additive category. 

\begin{enumerate}[label=(\roman*)]
	
	\item\label{item:def-additive-category} For us, an \emph{additive subcategory} \(\CB\) of \(\CA\) is a full subcategory which is closed under isomorphisms, direct sums and direct summands. 
	
	\item If \(\CB\sse \CA\) is a subcategory closed under finite direct sums, then \([\CB]\) denotes the (two-sided) ideal of \(\CA\) consisting of morphisms factoring through \(\CB\). In this case, \(\CA/[\CB]\) denotes the corresponding additive quotient. 

	Fix a skeleton \(\tensor[]{\CA}{^{\skel}}\) of \(\CA\). 
	By \(\ind \CA\) we mean the class of indecomposable objects of \(\tensor[]{\CA}{^{\skel}}\). 
	Moreover, if \(\CB\sse\CA\) is an additive subcategory, we impose that a skeleton \(\tensor[]{\CB}{^{\skel}}\) of \(\CB\) is chosen inside \(\tensor[]{\CA}{^{\skel}}\). In particular, this forces \(\ind\CB\sse\ind\CA\).

	\item\label{item:functor-categories} Suppose now that \(\CA\) is also skeletally small. 
	Let \(\Ab\) denote the category of all abelian groups. 
	A (covariant) functor \(M\colon \tensor[]{\CA}{^{\op}}\to \Ab\) is \emph{finitely presented} if there is an exact sequence 
	\(\begin{tikzcd}[column sep=0.5cm]
	\CA(-,A) \arrow{r}& \CA(-,B) \arrow{r}& M \arrow{r}& 0
	\end{tikzcd}\)
	for some objects \(A,B\in\CA\); see \cite[p.\ 155]{Beligiannis-on-the-freyd-cats-of-additive-cats}. 	
	(Note that these functors have been called `coherent' in e.g.\ \cite{Auslander-coherent-functors} and \cite{IyamaYoshino-mutation-in-tri-cats-rigid-CM-mods}. There is a more general notion of a coherent functor (see \cite[Def.\ B.5]{Fiorot-n-quasi-abelian-categories-vs-n-tilting-torsion-pairs}), but this agrees with what we call finitely presented when, for example, \(\CA\) is an idempotent complete triangulated category. See \cite[App.\ B]{Fiorot-n-quasi-abelian-categories-vs-n-tilting-torsion-pairs} and the references therein for more details.) 

	Denote by \(\rmod{\CA}\) the category of finitely presented, additive functors \(\tensor[]{\CA}{^{\op}}\to \Ab\). This category is additive. 
	If \(\CA\) is also idempotent complete and has weak kernels 
	(e.g.\ \(\CA\) is a contravariantly finite, additive subcategory of an idempotent complete, triangulated category), then \(\rmod{\CA}\) is abelian; see e.g.\ \cite[Def.\ 2.9]{IyamaYoshino-mutation-in-tri-cats-rigid-CM-mods}. See \cite[Sec.\ III.2]{Auslander-representation-dimension-of-artin-algebras-QMC} for more details; see also \cite[Sec.\ 2]{Auslander-coherent-functors} and \cite{Auslander-selected-works-part-1}.

	\item\label{item:H-functor-perp-category} 
	Suppose that \(\CA\) is a skeletally small, idempotent complete, triangulated category and that \(\CB\) is a contravariantly finite, additive subcategory of \(\CA\). 
	Then there is a cohomological, covariant functor 
	\begin{center}
	\(
	\begin{tikzcd}[ampersand replacement=\&, column sep=2cm]
		\begin{aligned}[t]
		\CA \\
		A
		\end{aligned} 
	\arrow[two heads]{r}{\tensor[]{H}{_{\CB}}}  \arrow[maps to, yshift={-0.75cm}]{r}\&
		\begin{aligned}[t]
		& \rmod{\CB} \\
		& \restr{\CA(-, A)}{\CB};
		\end{aligned}
	\end{tikzcd}
	\)	
	\end{center}
	see \cite[p.\ 846]{Beligiannis-rigid-objects-triangulated-subfactors-and-abelian-localizations}. 
	Lastly, we set  
	\[
	\tensor[]{\CB}{^{\perp_{0}}} 
		\deff \Ker \tensor[]{H}{_{\CB}} 
		= \set{A\in\CA | \CA(\CB,A) = 0},
	\] 
	considered as an additive subcategory of \(\CA\). 
	Note that \(\tensor[]{\CB}{^{\perp_{0}}}\) is closed under finite direct sums, so \([\tensor[]{\CB}{^{\perp_{0}}}]\) forms an ideal of \(\CA\). 

\end{enumerate}

We typically assume our categories are skeletally small and idempotent complete in Sections~\ref{sec:relative-index-wrt-rigid-subcategory}--\ref{sec:descriptions-of-K0CXEXsX}. See each setup at the beginning of the relevant (sub)section.

%%%%%%%%%%%%%%%%%%%%%%%%%%%%%%%%%%%%%%%%%%%%%%%%
%%%%%%%%%%%%%%%%%%%%%%%%%%%%%%%%%%%%%%%%%%%%%%%%

\subsection{Approximation in triangulated categories}
\label{subsec:approximation}

In the general setup for our main sections, 
there is always an ambient triangulated category. 
We utilise a collection of results from approximation theory in triangulated categories, especially in Section~\ref{sec:relative-index-wrt-rigid-subcategory}. 
For the convenience of the reader we recall them here, following Beligiannis \cite{Beligiannis-rigid-objects-triangulated-subfactors-and-abelian-localizations}.

\begin{setup}
\label{setup:approximation}

Let \(\CC\) be a skeletally small, idempotent complete, triangulated category. 
Suppose \(\CX\sse\CC\) is an additive (see Subsection~\ref{subsec:conventions-notation}\ref{item:def-additive-category}), contravariantly finite, rigid subcategory.

\end{setup}

The subcategory \(\CX\) is \emph{rigid} in \(\CC\) if 
\(\tensor*[]{\Ext}{_{\CC}^{1}}(\CX,\CX) 
	= \CC(\CX,\sus\CX)
	= 0
\); see \cite[Sec.\ 2.4]{Beligiannis-rigid-objects-triangulated-subfactors-and-abelian-localizations}. 
If \(A\) is an object in \(\CC\), then a \emph{right \(\CX\)-approximation of A} is a morphism \(\xi\colon X \to A\) in \(\CC\), such that \(X\in\CX\) and for any \(X'\in\CX\) the induced homomorphism 
\(
\CC(X',\xi)\colon \CC(X',X) \to \CC(X',A)
\)
is surjective. 
If for each \(A\in\CC\) there is a right \(\CX\)-approximation \(X\to A\), then \(\CX\) is said to be \emph{contravariantly finite}. 
See \cite[pp.\ 113--114]{AuslanderReiten-apps-of-contravariantly-finite-subcats}.

Suppose \(\CY,\CZ\sse\CC\) are additive subcategories. Then \(\add(\CY * \CZ)\) denotes the closure under direct summands of 
\[
\CY * \CZ \deff 
\Set{ C\in\CC | \text{there exists a triangle } Y\to C \to Z \to \sus Y \text{ with } Y\in\CY, Z\in\CZ},
\] 
considered as a full subcategory of \(\CC\). 

\begin{rem}
We note that it is frequently the case that \(\CY * \CZ = \add(\CY * \CZ)\) by \cite[Prop.\ 2.1(1)]{IyamaYoshino-mutation-in-tri-cats-rigid-CM-mods}. 
In particular, if \(\CX\) is an additive, 
rigid subcategory of a Krull--Schmidt triangulated category \(\CC\), then \(\CX * \sus \CX\) is closed under direct summands. 
Note also that we always have \(\CY,\CZ \sse \add(\CY * \CZ)\). 
\end{rem}

Now we make several observations related to the subcategory \(\CX * \sus \CX \) of \(\CC\). 
\begin{enumerate}[(\roman*)]

	\item \label{item:3iii} Up to isomorphism, each \(L\in\rmod{\CX}\) has the form \(\tensor*[]{H}{_{\CX}}(Z)\), where \(Z\in\CC\) permits a triangle 
	\begin{equation}\label{eqn:presentation-of-A}
	\begin{tikzcd}
		\tensor[]{X}{_{1}} \arrow{r}{\tensor[]{\xi}{_{1}}}& \tensor[]{X}{_{0}}\arrow{r} & Z\arrow{r} & \sus \tensor[]{X}{_{1}}
	\end{tikzcd}
	\end{equation}
	in \(\CC\) with \(\tensor[]{X}{_{i}}\in\CX\) and, in particular, \(Z\in\CX * \sus\CX\).
	Indeed, by definition of \(\rmod{\CX}\), the functor \(L\) fits into an exact sequence 
	\begin{center}
	\(
	\begin{tikzcd}
	\CX(-, \tensor[]{X}{_{1}}) \arrow{r}{\tensor[]{\Xi}{_{1}}}& \CX(-,\tensor[]{X}{_{0}})\arrow{r} & L(-)\arrow{r} & 0
	\end{tikzcd}
	\)
	\end{center}
	in \(\rmod{\CX}\), where \(\tensor[]{X}{_{i}}\in\CX\); see Subsection~\ref{subsec:conventions-notation}\ref{item:functor-categories}.
	By Yoneda's Lemma, we have \(\tensor[]{\Xi}{_{1}} = \CX(-,\tensor[]{\xi}{_{1}})\) for some morphism \(\tensor[]{\xi}{_{1}}\colon \tensor[]{X}{_{1}}\to \tensor[]{X}{_{0}}\) in \(\CC\). Completing \(\xi\) to a distinguished triangle~\eqref{eqn:presentation-of-A} and applying the cohomological functor \(\tensor*[]{H}{_{\CX}}\) (see Subsection~\ref{subsec:conventions-notation}\ref{item:H-functor-perp-category}) yields an exact sequence 
	\begin{center}
	\(
	\begin{tikzcd}[column sep=1.5cm]
	\CX(-, \tensor[]{X}{_{1}}) \arrow{r}{\CX(-,\tensor[]{\xi}{_{1}})}& \CX(-,\tensor[]{X}{_{0}})\arrow{r} & \restr{\CC(-,Z)}{\CX}\arrow{r} & 0,
	\end{tikzcd}
	\) 
	\end{center}
	whence \(L(-) \iso \restr{\CC(-,Z)}{\CX} = \tensor*[]{H}{_{\CX}}(Z)\). This argument is well-known; see, for example, \cite[Sec.\ 4.4]{Beligiannis-rigid-objects-triangulated-subfactors-and-abelian-localizations}, or \cite[proof of Lem.\ 4.3]{BuanMarsh-BM1}.
			
	\item \label{item:3iv} For each \(A\in\CC\), there is a triangle 
	\(\begin{tikzcd}[column sep=0.5cm]
	X \arrow{r}{\xi}& A \arrow{r}& Y \arrow{r} & \sus X
	\end{tikzcd}\)
	in \(\CC\), such that 
	\(X\in\CX\), 
	\(Y\in\tensor[]{\CX}{^{\perp_{0}}}\) and 
	\(\xi\) is a right \(\CX\)-approximation. 		
	See \cite[Sec.\ 2.1, Lem.\ 2.2(i)]{Beligiannis-rigid-objects-triangulated-subfactors-and-abelian-localizations}, also \cite[Lem.\ 1.2]{BuanMarsh-BM1}.
	
		\item \label{item:3ii} Given \(A\in\CC\), there is a triangle 
		\(\begin{tikzcd}[column sep=0.5cm]
			Y \arrow{r}& Z\arrow{r}{\zeta} & A\arrow{r} & \sus Y
		\end{tikzcd}\)
	in \(\CC\), such that 
	\(Y\in\tensor[]{\CX}{^{\perp_{0}}}\), and 
	\(Z\in\add(\CX * \sus \CX)\) and \(\zeta\) is a right \(\add(\CX * \sus\CX)\)-approximation. 
	Furthermore, for any \(Z'\in \add(\CX * \sus\CX)\), the homomorphism 
	\(\CC/[\tensor[]{\CX}{^{\perp_{0}}}](Z',\zeta)\colon \CC/[\tensor[]{\CX}{^{\perp_{0}}}](Z',Z) \to \CC/[\tensor[]{\CX}{^{\perp_{0}}}](Z',A)\) 
	is bijective. 
	See \cite[Eq.\ (4.2), Rem.\ 4.3, Lem.\ 4.4 and its proof]{Beligiannis-rigid-objects-triangulated-subfactors-and-abelian-localizations}, also \cite[Lem.\ 3.3]{BuanMarsh-BM1}.

\end{enumerate}

%%%%%%%%%%%%%%%%%%%%%%%%%%%%%%%%%%%%%%%%%%%%%%%%
%%%%%%%%%%%%%%%%%%%%%%%%%%%%%%%%%%%%%%%%%%%%%%%%

\subsection{Localisation in triangulated categories}
\label{subsec:localisation}

We also make use of localisation theory in triangulated categories. 
Again, we follow Beligiannis \cite{Beligiannis-rigid-objects-triangulated-subfactors-and-abelian-localizations}. The setup here is the same as for Subsection~\ref{subsec:approximation}.

\begin{setup}
\label{setup:localisation}

Let \(\CC\) be a skeletally small, idempotent complete, triangulated category. 
Suppose \(\CX\sse\CC\) is an additive (see Subsection~\ref{subsec:conventions-notation}\ref{item:def-additive-category}), contravariantly finite, rigid subcategory.

\end{setup}

Following \cite{Beligiannis-rigid-objects-triangulated-subfactors-and-abelian-localizations}, we denote by 
\(\ul{(-)}\colon \CC\to \CC/[\tensor[]{\CX}{^{\perp_{0}}}]\) the canonical quotient functor. 
Since \(\tensor[]{\CX}{^{\perp_{0}}} = \Ker \tensor*[]{H}{_{\CX}}\), there is a commutative diagram as follows.
\begin{equation}
\label{eqn:H-X-comm-diag}
\begin{tikzcd}
	\CC \arrow{r}{\ul{(-)}}\arrow{d}[swap]{\tensor*[]{H}{_{\CX}}}& \CC/[\tensor[]{\CX}{^{\perp_{0}}}] \arrow{dl}{\tensor[]{\ul{H}}{_{\CX}}}\\
	\rmod{\CX}&
\end{tikzcd}
\end{equation}
Beligiannis proved that the class \(\SR\deff\tensor[]{\SR}{_{\CX}}\) of regular morphisms in \(\CC/[\tensor[]{\CX}{^{\perp_{0}}}]\) admits a \emph{calculus of left and right fractions} (in the sense of Gabriel--Zisman \cite[Sec.\ I.2]{GabrielZisman-calc-of-fractions}); see \cite[Thm.\ 4.6]{Beligiannis-rigid-objects-triangulated-subfactors-and-abelian-localizations}. 
Recall that a morphism is \emph{regular} if it is simultaneously a monomorphism and an epimorphism; see \cite[p.\ 163]{Rump-almost-abelian-cats}.

\begin{rem}
Generalising work of Buan--Marsh \cite{BuanMarsh-BM2}, Beligiannis showed that 
if \(\tensor[]{\CX}{^{\perp_{0}}}\) is contravariantly finite in \(\CC\), 
then the additive category \(\CC/[\tensor[]{\CX}{^{\perp_{0}}}]\) has the structure of an integral category; see \cite[Prop.\ 4.12]{Beligiannis-rigid-objects-triangulated-subfactors-and-abelian-localizations}. Under other assumptions, this quotient category has more structure, e.g.\ 
quasi-abelian 
	\cite[Thm.\ 5.5]{Shah-quasi-abelian-hearts-of-twin-cotorsion-pairs-on-triangulated-cats}, 
abelian 
	\cite[Thm.\ 2.2]{BuanMarshReiten-cluster-tilted-algebras}, 
	\cite[Thm.\ 6.4]{Nakaoka-cotorsion-pairs-I}. 
An analogue in the context of higher homological algebra has also attracted interest \cite[Thm.\ 0.6]{JacobsenJorgensen-d-abelian-quotients-of-dplus2-angulated-categories}.
\end{rem}

The next result will often be useful for us; it follows from \cite[Lem.\ 4.1]{Beligiannis-rigid-objects-triangulated-subfactors-and-abelian-localizations} (see also \cite[Lem.\ 3.3]{BuanMarsh-BM2}).

\begin{lem}
\label{lem:Beligiannis-Lem-4-1-consequences}

Let 
\(\begin{tikzcd}[column sep=0.5cm]
A \arrow{r}{\alpha}& B \arrow{r}{\beta}& C \arrow{r}{\gamma}& \sus C
\end{tikzcd}\)
be a triangle in \(\CC\). 
Then the following hold. 
\begin{enumerate}[\textup{(\roman*)}]

	\item\label{item:Bel-Lem-4-1-i-zero} \(\ul{\beta} = 0\) in \(\CC/[\tensor[]{\CX}{^{\perp_{0}}}]\) 
				\(\iff\) \(\tensor[]{\ul{H}}{_{\CX}}(\ul{\beta}) = \tensor*[]{H}{_{\CX}}(\beta) = 0\) in \(\rmod{\CX}\). 
	
	\item\label{item:Bel-Lem-4-1-ii-epi} \(\ul{\beta}\) is an epimorphism in \(\CC/[\tensor[]{\CX}{^{\perp_{0}}}]\) 
				\(\iff\) \(\tensor[]{\ul{H}}{_{\CX}}(\ul{\beta}) = \tensor*[]{H}{_{\CX}}(\beta)\) is an epimorphism in \(\rmod{\CX}\) 
				\(\iff\) \(\ul{\gamma} = 0\) in \(\CC/[\tensor[]{\CX}{^{\perp_{0}}}]\). 
	
	\item\label{item:Bel-Lem-4-1-iii-mono} \(\ul{\beta}\) is a monomorphism in \(\CC/[\tensor[]{\CX}{^{\perp_{0}}}]\) 
				\(\iff\) \(\tensor[]{\ul{H}}{_{\CX}}(\ul{\beta}) = \tensor*[]{H}{_{\CX}}(\beta)\) is a monomorphism in \(\rmod{\CX}\) 
				\(\iff\) \(\ul{\alpha} = 0\) in \(\CC/[\tensor[]{\CX}{^{\perp_{0}}}]\). 
	
	\item\label{item:Bel-Lem-4-1-iv-iso} \(\ul{\beta}\in\SR\) 
				\(\iff\) \(\tensor[]{\ul{H}}{_{\CX}}(\ul{\beta}) = \tensor*[]{H}{_{\CX}}(\beta)\) is an isomorphism in \(\rmod{\CX}\) 
				\(\iff\) \(\ul{\alpha} = 0 = \ul{\gamma}\) in \(\CC/[\tensor[]{\CX}{^{\perp_{0}}}]\). 

\end{enumerate}
\end{lem}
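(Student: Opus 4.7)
The plan is to derive all four parts from (i), which I view as the central assertion that $\ul{\beta}=0$ in $\CC/[\tensor[]{\CX}{^{\perp_{0}}}]$ if and only if $\tensor*[]{H}{_{\CX}}(\beta)=0$ in $\rmod{\CX}$. First I would establish (i). The forward direction is immediate: if $\beta$ factors through some $Y\in\tensor[]{\CX}{^{\perp_{0}}}$, then $\CC(X,\beta)$ factors through $\CC(X,Y)=0$ for every $X\in\CX$, so $\tensor*[]{H}{_{\CX}}(\beta)=0$. For the converse, I would apply Subsection~\ref{subsec:approximation}\ref{item:3iv} to obtain a triangle $X' \xrightarrow{\xi'} A \xrightarrow{\pi} Y' \to \sus X'$ with $X'\in\CX$ and $Y'\in\tensor[]{\CX}{^{\perp_{0}}}$, where $\xi'$ is a right $\CX$-approximation. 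The assumption $\tensor*[]{H}{_{\CX}}(\beta)=0$ evaluated at $X'$ yields $\beta\circ\xi'=0$; hence $\beta$ factors through $\pi$, and thus through an object of $\tensor[]{\CX}{^{\perp_{0}}}$, giving $\ul{\beta}=0$.

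For (ii) and (iii), I would apply the cohomological functor $\tensor*[]{H}{_{\CX}}$ to the given triangle, producing an exact sequence
\[
\tensor*[]{H}{_{\CX}}(A) \to \tensor*[]{H}{_{\CX}}(B) \to \tensor*[]{H}{_{\CX}}(C) \to \tensor*[]{H}{_{\CX}}(\sus A)
\]
in the abelian category $\rmod{\CX}$. In this setting, $\tensor*[]{H}{_{\CX}}(\beta)$ is epic iff $\tensor*[]{H}{_{\CX}}(\gamma)=0$, and monic iff $\tensor*[]{H}{_{\CX}}(\alpha)=0$; combined with (i) applied to $\alpha$ and $\gamma$, this establishes the second and third clauses in the equivalence chains. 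To complete (ii), I would argue directly that $\ul{\beta}$ is epic iff $\ul{\gamma}=0$: the forward implication follows from $\gamma\circ\beta=0$ in $\CC$, which becomes $\ul{\gamma}\circ\ul{\beta}=0$ and forces $\ul{\gamma}=0$ by epicness of $\ul{\beta}$; for the converse, any relation $\ul{\delta}\circ\ul{\beta}=0$ translates via (i) to $\tensor*[]{H}{_{\CX}}(\delta)\circ\tensor*[]{H}{_{\CX}}(\beta)=0$, and epicness of $\tensor*[]{H}{_{\CX}}(\beta)$ then forces $\tensor*[]{H}{_{\CX}}(\delta)=0$, so $\ul{\delta}=0$ by (i) again. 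The argument for (iii) is dual, using $\beta\circ\alpha=0$. Finally, (iv) follows formally: $\ul{\beta}$ is regular iff monic and epic, which by (ii) and (iii) amounts to $\ul{\alpha}=0=\ul{\gamma}$, and equivalently to $\tensor*[]{H}{_{\CX}}(\beta)$ being both monic and epic in the abelian category $\rmod{\CX}$, i.e.\ an isomorphism.

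The main obstacle is the converse direction in (i), where contravariant finiteness of $\CX$ carries the entire argument: without it, there is no right $\CX$-approximation whose cone lies in $\tensor[]{\CX}{^{\perp_{0}}}$, and so no mechanism to detect a factorization of $\beta$ through $\tensor[]{\CX}{^{\perp_{0}}}$ from the vanishing of $\tensor*[]{H}{_{\CX}}(\beta)$. Once (i) is in hand, everything else is a formal consequence of the cohomological exactness of $\tensor*[]{H}{_{\CX}}$ together with the triangle relations $\gamma\circ\beta=0=\beta\circ\alpha$.
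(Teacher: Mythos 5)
Your proposal is correct, and it is a reasonable self-contained substitute for the paper's treatment: the paper does not prove this lemma but defers to Beligiannis [Lem.\ 4.1] (and Buan--Marsh), and your argument is essentially the standard one underlying that reference. The key observations are all in place: (i) reduces to the approximation triangle of Subsection~\ref{subsec:approximation}\ref{item:3iv}, whose cone lies in \(\tensor[]{\CX}{^{\perp_{0}}}\), so that \(\tensor*[]{H}{_{\CX}}(\beta)=0\) forces \(\beta\) to factor through that cone; the remaining parts then follow from the long exact sequence obtained by applying the cohomological functor \(\tensor*[]{H}{_{\CX}}\), the relations \(\gamma\beta=0=\beta\alpha\), fullness of the quotient functor, and the fact that \(\rmod{\CX}\) is abelian (so monic plus epic gives iso in part (iv)). Note also that your proof of (i) works for an arbitrary morphism, which justifies applying it to \(\alpha\), \(\gamma\) and the test morphisms \(\delta\), as you implicitly do.

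One small slip to fix: in the converse direction of (i) you take the approximation triangle \(X'\xrightarrow{\xi'} A\xrightarrow{\pi} Y'\to\sus X'\) for the object \(A\), but then form the composite \(\beta\circ\xi'\), which only typechecks if the approximation triangle is taken for \(B\), the domain of \(\beta\). The triangle should read \(X'\xrightarrow{\xi'} B\xrightarrow{\pi} Y'\to\sus X'\); with that correction the factorisation of \(\beta\) through \(Y'\in\tensor[]{\CX}{^{\perp_{0}}}\) via exactness of \(\CC(-,C)\) applied to this triangle goes through exactly as you describe.
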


Since \(\SR\) admits a calculus of left and right fractions, there is a commutative diagram
\begin{equation}
\label{eqn:GZ-localisation-diagram}
\begin{tikzcd}[row sep=1.2cm]
	\CC/[\tensor[]{\CX}{^{\perp_{0}}}] \arrow{r}{\tensor[]{\ul{H}}{_{\CX}}}\arrow{d}[swap]{\tensor[]{L}{_{\SR}}}& \rmod{\CX}\\
	(\CC/[\tensor[]{\CX}{^{\perp_{0}}}]\tensor[]{)}{_{\SR}}\arrow{ur}{\simeq}[swap]{\tensor[]{\wt{H}}{_{\CX}}},&
\end{tikzcd}
\end{equation}
where \(\tensor[]{L}{_{\SR}}\) is the canonical localisation functor and \(\tensor[]{\wt{H}}{_{\CX}}\) is the functor induced from the fact that \(\tensor[]{\ul{H}}{_{\CX}}\) sends morphisms in \(\SR\) to isomorphisms in \(\rmod{\CX}\). 
Moreover, it follows from \cite[Thm.\ 4.6 and its proof]{Beligiannis-rigid-objects-triangulated-subfactors-and-abelian-localizations} that \(\tensor[]{\wt{H}}{_{\CX}}\) is an equivalence of categories (although this is not the equivalence in the statement of \cite[Thm.\ 4.6]{Beligiannis-rigid-objects-triangulated-subfactors-and-abelian-localizations} itself).

%%%%%%%%%%%%%%%%%%%%%%%%%%%%%%%%%%%%%%%%%%%%%%%%
%%%%%%%%%%%%%%%%%%%%%%%%%%%%%%%%%%%%%%%%%%%%%%%%

\subsection{Extriangulated categories from triangulated categories}
\label{subsec:extriangulated-categories}

In the pursuit of unifying the theory of exact and triangulated categories, Nakaoka--Palu defined \emph{extriangulated categories}; see \cite[Def.\ 2.12]{NakaokaPalu-extriangulated-categories-hovey-twin-cotorsion-pairs-and-model-structures}. 
This attempt has been very successful, with many results that hold individually for exact and triangulated categories having been generalised to the extriangulated framework; see e.g.\ 
\cite{HassounShah-integral-and-quasi-abelian-hearts-of-twin-cotorsion-pairs-on-extriangulated-categories}, 
\cite{IyamaNakaokaPalu-Auslander-Reiten-theory-in-extriangulated-categories}, 
\cite{LiuYNakaoka-hearts-of-twin-cotorsion-pairs-on-extriangulated-categories},
\cite{Msapato-the-karoubi-envelope-and-weak-idempotent-completion-of-an-extriangulated-category}, 
\cite{NakaokaOgawaSakai-localization-of-extriangulated-categories}, 
amongst many others.

In this section, we will briefly recall the definition of an extriangulated category. However, since we will not be dealing with the axioms of extriangulated categories, we do not provide all the details here. For more details we refer the reader to \cite[Sec.\ 2]{NakaokaPalu-extriangulated-categories-hovey-twin-cotorsion-pairs-and-model-structures}. 
The extriangulated categories we consider in this article arise from triangulated categories, and we explain this relationship in Example~\ref{example:triangulated-category-is-extriangulated}.

\begin{defn}

An \emph{extriangulated category} is a triple \((\CC,\BE,\fs)\), satisfying the following.
\begin{enumerate}[(\roman*)]
	\item \(\CC\) is an additive category.
	\item \(\BE\colon \tensor[]{\CC}{^{\op}}\times \CC \to \Ab\) is a biadditive functor.
	\item \(\fs\) is an assignment that, for each \(A,C\in\CC\), assigns to each \emph{extension} \(\delta\in\BE(C,A)\) an 
	\emph{equivalence class}
	\(
	[ \begin{tikzcd}[column sep=0.5cm]
	A \arrow{r} & B \arrow{r} & C
	\end{tikzcd} ]
	\) of a \(3\)-term complex in \(\CC\),
	where two complexes 
	\(
	\begin{tikzcd}[column sep=0.5cm]
		\hspace{-5pt}A \arrow{r} & B \arrow{r} & C
	\end{tikzcd} 
	\) 
	and 
	\(
	\begin{tikzcd}[column sep=0.5cm]
	A \arrow{r} & B' \arrow{r} & C
	\end{tikzcd} 
	\) 
	are \emph{equivalent} if there is an isomorphism \(b\colon B\to B'\) and a commutative diagram 
	\[
	\begin{tikzcd}%[column sep=0.5cm]
	A \arrow{r}\arrow[equals]{d} & B \arrow{r} \arrow{d}{\iso}[swap]{b}& C \arrow[equals]{d}\\
	A \arrow{r} & B' \arrow{r} & C.
	\end{tikzcd}
	\]
	\item \(\fs\) is an \emph{additive realisation} of \(\BE\) in the sense of \cite[Defs.\ 2.9, 2.10]{NakaokaPalu-extriangulated-categories-hovey-twin-cotorsion-pairs-and-model-structures}.
	\item \(\CC\), \(\BE\) and \(\fs\) satisfy the axioms in \cite[Def.\ 2.12]{NakaokaPalu-extriangulated-categories-hovey-twin-cotorsion-pairs-and-model-structures}.
\end{enumerate}
\end{defn}

If \((\CC,\BE,\fs)\) is an extriangulated category with \(\delta\in\BE(C,A)\) 
and 
\[
\fs(\delta) 
	= [ \begin{tikzcd}%[column sep=0.5cm]
			A \arrow{r} & B \arrow{r} & C
		 \end{tikzcd} ],
\]
then we call 
\(
X^{\combul}\colon
	\begin{tikzcd}[column sep=0.5cm]
		A \arrow{r} & B \arrow{r} & C
	\end{tikzcd}
\)
an \emph{\(\fs\)-conflation}. 
Furthermore, in this case, \(\lan X^{\combul}, \delta \ran\) is called an \emph{\(\BE\)-triangle} (or simply an \emph{extriangle} if there is no confusion) and we usually depict this with the diagram
\(\begin{tikzcd}[column sep=0.5cm]
A \arrow{r} & B \arrow{r} & C	 \arrow[dashed]{r}{\delta} & {}.
\end{tikzcd}\) 
See \cite[Defs.\ 2.15, 2.19]{NakaokaPalu-extriangulated-categories-hovey-twin-cotorsion-pairs-and-model-structures}.

Each triangulated category is an example of an extriangulated category. The following explains how.

\begin{example}
\label{example:triangulated-category-is-extriangulated}

Suppose \(\CC\) is a triangulated category with suspension functor \(\sus\). 
This data allows us to define a biadditive functor 
\(\BE\colon \tensor[]{\CC}{^{\op}}\times\CC\to\Ab\) 
and an additive realisation \(\fs\) of \(\BE\) as follows. 
We define \(\BE\) on objects by 
\[
\BE(C,A)\deff \CC(C,\sus A).
\]
For morphisms \(\gamma\colon C'\to C\) and \(\alpha\colon A\to A'\), 
the morphism 
\(
\BE(\gamma,\alpha)\colon \BE(C,A) \to \BE(C',A')
\)
 is given by 
\[
\BE(\gamma,\alpha)(\delta)\deff (\sus \alpha)\circ \delta\circ \gamma.
\]

Now let \(\delta\in\BE(C,A)\) be an extension, i.e.\ \(\delta\colon C \to \sus A\) is a morphism in \(\CC\), and so fits into a triangle of the form 
\begin{equation}
\label{eqn:triangle-for-delta}
\begin{tikzcd}%[column sep=0.5cm]
A \arrow{r}{\alpha} & B \arrow{r}{\beta} & C	 \arrow{r}{\delta} & \sus A.
\end{tikzcd}
\end{equation}
In order to define a realisation \(\fs\) of \(\BE\), we first need to assign an equivalence class of an \(3\)-term complex to \(\delta\) that has first term \(A\) and last term \(C\). There is a canonical choice given a triangle \eqref{eqn:triangle-for-delta}: we put 
\[\fs(\delta) = 
	[ \begin{tikzcd}%[column sep=0.5cm]
	A \arrow{r}{\alpha} & B \arrow{r}{\beta} & C
	\end{tikzcd} ]. 
\]
Nakaoka--Palu showed that \(\fs\) defined in this way gives an additive realisation of \(\BE\) 
and, moreover, \((\CC,\BE,\fs)\) is an extriangulated category; 
see \cite[Prop.\ 3.22]{NakaokaPalu-extriangulated-categories-hovey-twin-cotorsion-pairs-and-model-structures}. 
\end{example}

%%%%%%%%%%%%%%%%%%%%%%%%%%%%%%%%%%%%%%%%%%%%%%%%
%%%%%%%%%%%%%%%%%%%%%%%%%%%%%%%%%%%%%%%%%%%%%%%%

We will need the theory of relative exangulated structures in the sense of \cite[Sec.\ 3.2]{HerschendLiuNakaoka-n-exangulated-categories-I-definitions-and-fundamental-properties}. 
Note that a category is extriangulated if and only if it is \(1\)-exangulated in the sense of Herschend--Liu--Nakaoka; see \cite[Prop.\ 4.3]{HerschendLiuNakaoka-n-exangulated-categories-I-definitions-and-fundamental-properties}. 
However, since we will need relative exangulated structures solely for extriangulated categories arising from triangulated categories, we provide a short exposition only in this context. 

Suppose \(\CC\) is a triangulated category with suspension functor \(\sus\), and denote the corresponding extriangulated category by \((\CC,\BE,\fs)\); see Example~\ref{example:triangulated-category-is-extriangulated}.  
Given a full subcategory \(\DD\) of \(\CC\), 
we recall how one obtains a \emph{relative} extriangulated category 
\((\CC,\tensor[]{\BE}{_{\DD}},\tensor[]{\fs}{_{\DD}})\). 
First, recall from Subsection~\ref{subsec:conventions-notation}\ref{item:H-functor-perp-category} that the functor 
\(\tensor[]{H}{_{\DD}}\colon \CC \to \rmod{\DD}\) is given by \(\tensor[]{H}{_{\DD}}(C) = \restr{\CC(-,C)}{\DD}\).  
Define the subfunctor \(\tensor[]{\BE}{_{\DD}} \colon \tensor[]{\CC}{^{\op}}\times\CC\to\Ab\) of \(\BE\) on objects as follows:
\begin{align}
	\tensor[]{\BE}{_{\DD}}(C,A)  
		&\deff \Set{ \delta\in\BE(C,A) | \tensor[]{H}{_{\DD}}(\delta) = 0 } \label{eqn:definition-of-relative-bifunctor}\\[5pt]
		& \hspace{2.5pt}= \Set{ \delta\in\BE(C,A) | 
				\begin{aligned}
					&\hspace{-5pt}\begin{tikzcd}[column sep=0.5cm, ampersand replacement=\&]
					D \arrow{r}{\xi}\& C \arrow{r}{\delta}\& \sus A
					\end{tikzcd}
					\text{ is zero for}\\[-7pt]
					&\text{all } \xi\colon D \to C \text{ with } D\in\DD
				\end{aligned}}.\nonumber
\end{align}
An additive realisation of \(\tensor[]{\BE}{_{\DD}}\) is given by the restriction 
\(\tensor[]{\fs}{_{\DD}}\)
of \(\fs\) to \(\tensor[]{\BE}{_{\DD}}\). 
Then \((\CC,\tensor[]{\BE}{_{\DD}},\tensor[]{\fs}{_{\DD}})\) is an extriangulated category. 
Furthermore, in this relative extriangulated category, all objects in $\DD$ become \emph{$\tensor[]{\BE}{_{\DD}}$-projective} in the sense of \cite[Exam.\ 5.9]{Bennett-TennenhausHauglandSandoyShah-the-category-of-extensions-and-a-characterisation-of-n-exangulated-functors}
(see also \cite[Def.\ 3.23]{NakaokaPalu-extriangulated-categories-hovey-twin-cotorsion-pairs-and-model-structures}).
See \cite[Def.\ 2.11, Prop.\ 3.16, Def.\ 3.18, Prop.\ 3.19]{HerschendLiuNakaoka-n-exangulated-categories-I-definitions-and-fundamental-properties}.

As noted in \cite[Thm.\ 2.12]{JorgensenShah-grothendieck-groups-of-d-exangulated-categories-and-a-modified-CC-map}, the extriangulated category \((\CC,\tensor[]{\BE}{_{\DD}},\tensor[]{\fs}{_{\DD}})\) is an \emph{extriangulated subcategory} in the sense of \cite[Def.\ 3.7]{Haugland-the-grothendieck-group-of-an-n-exangulated-category}. In particular, the pair 
\((\idfunc{\CC},\Delta) \colon (\CC,\tensor[]{\BE}{_{\DD}},\tensor[]{\fs}{_{\DD}})\to (\CC,\BE,\fs) \) 
is an extriangulated functor in the sense of \cite[Def.\ 2.32]{Bennett-TennenhausShah-transport-of-structure-in-higher-homological-algebra}, where \(\idfunc{\CC}\) is the identity functor on \(\CC\) and \(\Delta \colon \tensor[]{\BE}{_{\DD}} \Rightarrow \BE\) is the canonical natural transformation given by inclusion.

%%%%%%%%%%%%%%%%%%%%%%%%%%%%%%%%%%%%%%%%%%%%%%%%
%%%%%%%%%%%%%%%%%%%%%%%%%%%%%%%%%%%%%%%%%%%%%%%%

\section{The index with respect to a rigid subcategory}
\label{sec:relative-index-wrt-rigid-subcategory}

In this section, we prove Theorem~\ref{thmx:X-index-additive-on-triangles-with-error-term} from Section~\ref{sec:introduction}. As such, 
\(\tensor[]{K}{_{0}}\)-groups of skeletally small extriangulated categories play a central role here. 
Before we restrict to Setup~\ref{setup:1}, we provide some definitions in more generality. 
The following definition is equivalent to that of the Grothendieck group of an \(n\)-exangulated category in the sense of Haugland \cite{Haugland-the-grothendieck-group-of-an-n-exangulated-category} with \(n=1\); see \cite[Rem.\ 2.3]{Fedele-grothendieck-groups-of-triangulated-categories-via-cluster-tilting-subcategories}, \cite[Def.\ 2.18]{JorgensenShah-grothendieck-groups-of-d-exangulated-categories-and-a-modified-CC-map}.

\begin{defn}
\label{def:Grothendieck-group-of-extriangulated-category}
Suppose \((\CC,\BE,\fs)\) is a skeletally small extriangulated category. 
The \emph{Grothendieck group} of 
\((\CC,\BE,\fs)\) is defined to be 
\[
\tensor[]{K}{_{0}}(\CC,\BE,\fs) 
	\deff 
		\tensor*[]{K}{_{0}^{\sp}}(\CC)  /  \CI,
\] 
where 
\(
\CI
	\deff 
\left.\big\lan\,
	[A\tensor*[]{]}{_{\CC}^{\sp}} - [B\tensor*[]{]}{_{\CC}^{\sp}} + [C\tensor*[]{]}{_{\CC}^{\sp}} \,\middle|\, \hspace{-5pt}\begin{tikzcd}[column sep=0.5cm, ampersand replacement=\&]
			A \arrow{r}\& B \arrow{r}\& C
			\end{tikzcd} 
			\text{ is an } \fs\text{-conflation in } (\CC,\BE,\fs) \,\big\ran\right.
\).
\end{defn}

\begin{rem}
\label{rem:grothendieck-group-of-relative-structure}
Suppose \((\CC,\BE,\fs)\) is a skeletally small extriangulated category and consider its Grothendieck group 
\(
\tensor[]{K}{_{0}}(\CC,\BE,\fs) 
	= \tensor*[]{K}{_{0}^{\sp}}(\CC)  /  \CI,
\) 
as given in Definition~\ref{def:Grothendieck-group-of-extriangulated-category}.
Let \(\DD\) be a full subcategory of \(\CC\). 
Then the Grothendieck group
of 
\((\CC,\tensor[]{\BE}{_{\DD}},\tensor[]{\fs}{_{\DD}})\) 
is by definition
\(
\tensor[]{K}{_{0}}(\CC,\tensor[]{\BE}{_{\DD}},\tensor[]{\fs}{_{\DD}})
	 = \tensor*[]{K}{_{0}^{\sp}}(\CC)/\tensor*[]{\CI}{_{\DD}},
\)
where 
\[
\tensor*[]{\CI}{_{\DD}}  \deff 
\Braket{
	[A\tensor*[]{]}{_{\CC}^{\sp}} - [B\tensor*[]{]}{_{\CC}^{\sp}} + [C\tensor*[]{]}{_{\CC}^{\sp}} | \hspace{-5pt}\begin{tikzcd}[column sep=0.5cm, ampersand replacement=\&]
			A \arrow{r}\& B \arrow{r}\& C
			\end{tikzcd} 
			\text{ is an } \tensor[]{\fs}{_{\DD}}\text{-conflation in } (\CC,\tensor[]{\BE}{_{\DD}},\tensor[]{\fs}{_{\DD}})
}.
\]
Thus, the generators of \(\tensor*[]{\CI}{_{\DD}}\) arise from extriangles 
\(
\begin{tikzcd}[column sep=0.5cm]
	A \arrow{r} & B \arrow{r} & C \arrow[dashed]{r}{\delta} & {}
\end{tikzcd} 
\)
in \((\CC,\BE,\fs)\) for which 
\(
\tensor[]{H}{_{\DD}}(\delta) = 0
\). 
In particular, 
\(\tensor*[]{\CI}{_{\DD}}\) is contained in \(\CI\). 
\end{rem}

Our first result (see Proposition~\ref{lemma7} below) concerning the index is that it is \emph{additive on extriangles} in a certain extriangulated category in the following sense.

\begin{defn}
\label{def:additive-function-on-extriangulated-category}
Let \((\CC,\BE,\fs)\) be an extriangulated category, and let \(\obj(\CC)\) denote the class of objects in \(\CC\). 
Let \(G\) be any abelian group. 
An assignment \(f\colon \obj(\CC)\to G\) is \emph{additive on extriangles in \((\CC,\BE,\fs)\)} if, for each extriangle 
\(\begin{tikzcd}[column sep=0.5cm]
A \arrow{r}& B \arrow{r}& C \arrow[dashed]{r}{\delta} & {},
\end{tikzcd}\)
we have 
\(
f(A) - f(B) + f(C) = 0
\).
\end{defn}

The prototypical example of an additive assignment on extriangles in an extriangulated category is the length function on short exact sequences in a module category.

%%%%%%%%%%%%%%%%%%%%%%%%%%%%%%%%%%%%%%%%%%%%%%%%
%%%%%%%%%%%%%%%%%%%%%%%%%%%%%%%%%%%%%%%%%%%%%%%%

\begin{setup}
\label{setup:1}

Let \(\CC\) be a skeletally small, idempotent complete, triangulated category. 
Suppose \(\CX\sse\CC\) is an additive, contravariantly finite, rigid subcategory. 
Recall that by an \emph{additive} subcategory we assume closure under isomorphisms, direct sums and direct summands (see Subsection~\ref{subsec:conventions-notation}\ref{item:def-additive-category}).

\end{setup}

In particular, Setup~\ref{setup:1} is the same as Setups~\ref{setup:approximation} and \ref{setup:localisation}.

Our goal for the remainder of this section is to establish Theorem~\ref{thmx:X-index-additive-on-triangles-with-error-term}. 
We begin by defining the index on \(\CC\) with respect to \(\CX\) (see Definition~\ref{def:relative-index-wrt-X}). 
We show this index is 
additive on extriangles in \((\CC,\tensor[]{\BE}{_{\CX}},\tensor[]{\fs}{_{\CX}})\) (see Proposition~\ref{lemma7}), 
and 
additive on triangles in \(\CC\) up to an error term (see Theorem~\ref{thm:X-index-additive-on-triangles-with-error-term}). 
In order to prove this, we develop several analogues of results from \cite{Jorgensen-tropical-friezes-and-the-index-in-higher-homological-algebra}. 
We omit the proofs that are entirely similar, but state the needed modifications.

\begin{defn}
\label{def:relative-index-wrt-X}

If \(C\in\CC\), then denote by \(\newindx{\CX}(C) \deff [C\tensor[]{]}{_{\CX}}\) the class of \(C\) in the Grothendieck group 
\(\tensor[]{K}{_{0}}(\CC,\tensor[]{\BE}{_{\CX}},\tensor[]{\fs}{_{\CX}})\).  
We call  \(\newindx{\CX}\) the \emph{index with respect to \(\CX\)}.

\end{defn}

The index with respect to \(\CX\) also induces a well-defined group homomorphism 
\(\tensor*[]{K}{_{0}^{\sp}}(\CC) \to \tensor[]{K}{_{0}}(\CC,\tensor[]{\BE}{_{\CX}},\tensor[]{\fs}{_{\CX}})\), 
which we also denote by \(\newindx{\CX}\) by abuse of notation.

\begin{prop}
\label{lemma7}
If 
\(\begin{tikzcd}[column sep=0.5cm]
	A \arrow{r} & B \arrow{r}& C \arrow{r}{\delta} & \sus A
\end{tikzcd}\)
is a triangle in \(\CC\) with \(\tensor*[]{H}{_{\CX}}(\delta) = 0\), then 
\(\newindx{\CX}(B) = \newindx{\CX}(A) + \newindx{\CX}(C)\). 
In particular, \(\newindx{\CX}\) is additive on extriangles in \((\CC,\tensor[]{\BE}{_{\CX}},\tensor[]{\fs}{_{\CX}})\).

\end{prop}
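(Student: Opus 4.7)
The plan is to observe that this proposition is essentially a direct unwinding of definitions: the hypothesis $\tensor*[]{H}{_{\CX}}(\delta) = 0$ is precisely the condition that places the extension $\delta$ in the relative bifunctor $\tensor[]{\BE}{_{\CX}}(C,A)$, and the rest follows from the defining relations of the Grothendieck group.

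More concretely, I would first recall from \eqref{eqn:definition-of-relative-bifunctor} that
\[
\tensor[]{\BE}{_{\CX}}(C,A) = \Set{ \delta \in \BE(C,A) | \tensor*[]{H}{_{\CX}}(\delta) = 0 }.
\]
Hence the assumption on the given triangle $\begin{tikzcd}[column sep=0.4cm] A \arrow{r} & B \arrow{r} & C \arrow{r}{\delta} & \sus A \end{tikzcd}$ means $\delta \in \tensor[]{\BE}{_{\CX}}(C,A)$. Since $\tensor[]{\fs}{_{\CX}}$ is obtained by restricting $\fs$ to $\tensor[]{\BE}{_{\CX}}$ (see Subsection~\ref{subsec:extriangulated-categories}), the complex $\begin{tikzcd}[column sep=0.4cm] A \arrow{r} & B \arrow{r} & C \end{tikzcd}$ is an $\tensor[]{\fs}{_{\CX}}$-conflation in $(\CC,\tensor[]{\BE}{_{\CX}},\tensor[]{\fs}{_{\CX}})$.

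Next, by Remark~\ref{rem:grothendieck-group-of-relative-structure}, the element $[A\tensor*[]{]}{_{\CC}^{\sp}} - [B\tensor*[]{]}{_{\CC}^{\sp}} + [C\tensor*[]{]}{_{\CC}^{\sp}}$ lies in the ideal $\tensor*[]{\CI}{_{\CX}}$ defining $\tensor[]{K}{_{0}}(\CC,\tensor[]{\BE}{_{\CX}},\tensor[]{\fs}{_{\CX}}) = \tensor*[]{K}{_{0}^{\sp}}(\CC) / \tensor*[]{\CI}{_{\CX}}$. Passing to the quotient and using Definition~\ref{def:relative-index-wrt-X}, this gives $\newindx{\CX}(A) - \newindx{\CX}(B) + \newindx{\CX}(C) = 0$, i.e.\ $\newindx{\CX}(B) = \newindx{\CX}(A) + \newindx{\CX}(C)$, as desired.

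For the ``in particular'' clause, I would note that any extriangle $\begin{tikzcd}[column sep=0.4cm] A \arrow{r} & B \arrow{r} & C \arrow[dashed]{r}{\delta} & {} \end{tikzcd}$ in $(\CC,\tensor[]{\BE}{_{\CX}},\tensor[]{\fs}{_{\CX}})$ arises from an extension $\delta \in \tensor[]{\BE}{_{\CX}}(C,A)$, which automatically satisfies $\tensor*[]{H}{_{\CX}}(\delta) = 0$; thus the first statement applies. There is no genuine obstacle here: the work has already been done in setting up the relative extriangulated structure and its Grothendieck group, and the proposition is really a compatibility check between these definitions and Definition~\ref{def:additive-function-on-extriangulated-category}. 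The only mild subtlety is to make sure one cites the correct definitions (the subfunctor $\tensor[]{\BE}{_{\CX}}$ as described in \eqref{eqn:definition-of-relative-bifunctor} and the Grothendieck group as in Remark~\ref{rem:grothendieck-group-of-relative-structure}) so that the equality of classes in $\tensor[]{K}{_{0}}(\CC,\tensor[]{\BE}{_{\CX}},\tensor[]{\fs}{_{\CX}})$ is truly forced.
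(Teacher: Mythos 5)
Your proposal is correct and follows essentially the same route as the paper's proof: both arguments observe that \(\tensor*[]{H}{_{\CX}}(\delta)=0\) places \(\delta\) in \(\tensor[]{\BE}{_{\CX}}(C,A)\), so that \(A\to B\to C\) is an \(\tensor[]{\fs}{_{\CX}}\)-conflation whose associated relation in \(\tensor*[]{\CI}{_{\CX}}\) forces \(\newindx{\CX}(A)-\newindx{\CX}(B)+\newindx{\CX}(C)=0\), and both deduce the ``in particular'' clause by noting that every extriangle of \((\CC,\tensor[]{\BE}{_{\CX}},\tensor[]{\fs}{_{\CX}})\) satisfies the hypothesis. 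No gaps.
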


\begin{proof}

Since \(\tensor*[]{H}{_{\CX}}(\delta) = 0\), we have that \(\delta\) lies in \(\tensor[]{\BE}{_{\CX}}(C,A)\) (see \eqref{eqn:definition-of-relative-bifunctor} in Subsection~\ref{subsec:extriangulated-categories}), so 
\(\tensor[]{\fs}{_{\CX}}(\delta) = 
	[
	\begin{tikzcd}[column sep=0.5cm]
		A \arrow{r} & B \arrow{r}& C
	\end{tikzcd}
	]
\). 
Thus,
\[
\newindx{\CX}(A) - \newindx{\CX}(B) + \newindx{\CX}(C) = [A\tensor[]{]}{_{\CX}} - [B\tensor[]{]}{_{\CX}} + [C\tensor[]{]}{_{\CX}}= 0
\] in \(\tensor[]{K}{_{0}}(\CC,\tensor[]{\BE}{_{\CX}},\tensor[]{\fs}{_{\CX}})\) and the first claim follows. 

Recall that the extriangles 
\(\begin{tikzcd}[column sep=0.5cm]
	A \arrow{r} & B \arrow{r}& C \arrow[dashed]{r}{\delta} & {}
\end{tikzcd}\) 
in \((\CC,\tensor[]{\BE}{_{\CX}},\tensor[]{\fs}{_{\CX}})\) 
satisfy 
\(\tensor*[]{H}{_{\CX}}(\delta) = 0\) (see \eqref{eqn:definition-of-relative-bifunctor} in Subsection~\ref{subsec:extriangulated-categories}). 
Thus, the equation above shows that \(\newindx{\CX}\) is indeed additive on extriangles in \((\CC,\tensor[]{\BE}{_{\CX}},\tensor[]{\fs}{_{\CX}})\) 
in the sense of Definition~\ref{def:additive-function-on-extriangulated-category}.
\end{proof}

We see later that \(\newindx{\CX}\) is additive on triangles in \(\CC\) up to an error term; see Theorem~\ref{thm:X-index-additive-on-triangles-with-error-term}. 
How far this index is from being additive is measured by a certain abelian group homomorphism 
\(\tensor[]{\theta}{_{\CX}} \); 
see Proposition~\ref{prop:definition-of-theta-sub-CX}. 
For this result we need several preliminary lemmas.

\begin{lem}
\label{lemma8}

For \(C,D\in\CC\), the following are equivalent.
\begin{enumerate}[\textup{(\roman*)}]

	\item \(\tensor*[]{H}{_{\CX}}(C) \iso \tensor*[]{H}{_{\CX}}(D)\) in \(\rmod{\CX}\).
	
	\item There is a diagram
		\(\begin{tikzcd}[column sep=0.5cm]
		C \arrow{r}{\gamma}& A & \arrow{l}[swap]{\delta}D
		\end{tikzcd}\)
		in \(\CC\) with \(\ul{\gamma}, \ul{\delta}\in\SR\).
		
	\item There is a diagram
		\(\begin{tikzcd}[column sep=0.5cm]
		C & B \arrow{l}[swap]{\mu} \arrow{r}{\nu} & D
		\end{tikzcd}\)
		in \(\CC\) with \(\ul{\mu}, \ul{\nu}\in\SR\).

\end{enumerate}

\end{lem}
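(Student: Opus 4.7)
The plan is to pass between the three conditions through the equivalence of categories $\tensor[]{\wt{H}}{_{\CX}}\colon (\CC/[\tensor[]{\CX}{^{\perp_{0}}}])_{\SR} \xrightarrow{\simeq} \rmod{\CX}$ from diagram \eqref{eqn:GZ-localisation-diagram}, combined with the fact that $\SR$ admits a calculus of both left and right fractions.

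The easy implications are (ii)$\Rightarrow$(i) and (iii)$\Rightarrow$(i). In both cases, by Lemma~\ref{lem:Beligiannis-Lem-4-1-consequences}\ref{item:Bel-Lem-4-1-iv-iso} the hypothesis that each leg lies in $\SR$ yields that the corresponding $\tensor*[]{H}{_{\CX}}$-images are isomorphisms in $\rmod{\CX}$, and composing (respectively, the inverse composition) produces the required isomorphism $\tensor*[]{H}{_{\CX}}(C) \iso \tensor*[]{H}{_{\CX}}(D)$.

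For (i)$\Rightarrow$(iii), start with an isomorphism $\phi \colon \tensor*[]{H}{_{\CX}}(C) \iso \tensor*[]{H}{_{\CX}}(D)$ in $\rmod{\CX}$. Since $\tensor[]{\wt{H}}{_{\CX}}$ is an equivalence, there is a unique isomorphism $\tilde{\phi}\colon C \to D$ in $(\CC/[\tensor[]{\CX}{^{\perp_{0}}}])_{\SR}$ with $\tensor[]{\wt{H}}{_{\CX}}(\tilde{\phi}) = \phi$. Because $\SR$ admits a calculus of right fractions, one can represent $\tilde{\phi} = \tensor[]{L}{_{\SR}}(\ul{\nu}) \circ \tensor[]{L}{_{\SR}}(\ul{\mu})^{-1}$ with $\ul{\mu}\colon B\to C$ in $\SR$ and $\ul{\nu}\colon B\to D$ a morphism in $\CC/[\tensor[]{\CX}{^{\perp_{0}}}]$. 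This rewrites as $\tilde{\phi} \circ \tensor[]{L}{_{\SR}}(\ul{\mu}) = \tensor[]{L}{_{\SR}}(\ul{\nu})$, and because the left-hand side is a composition of isomorphisms in the localisation, $\tensor[]{L}{_{\SR}}(\ul{\nu})$ is an isomorphism as well. Applying $\tensor[]{\wt{H}}{_{\CX}}$ shows $\tensor*[]{H}{_{\CX}}(\nu)$ is an isomorphism in $\rmod{\CX}$; completing $\nu$ to a triangle and invoking Lemma~\ref{lem:Beligiannis-Lem-4-1-consequences}\ref{item:Bel-Lem-4-1-iv-iso} therefore gives $\ul{\nu}\in\SR$. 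This yields the required diagram $\begin{tikzcd}[column sep=0.5cm] C & B \arrow{l}[swap]{\mu} \arrow{r}{\nu} & D\end{tikzcd}$.

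The implication (i)$\Rightarrow$(ii) is entirely dual: using the calculus of left fractions this time, represent the same $\tilde{\phi}$ as $\tensor[]{L}{_{\SR}}(\ul{\delta})^{-1}\circ \tensor[]{L}{_{\SR}}(\ul{\gamma})$ with $\ul{\delta}\colon D\to A$ in $\SR$, and then conclude $\ul{\gamma}\in\SR$ by the same argument via Lemma~\ref{lem:Beligiannis-Lem-4-1-consequences}\ref{item:Bel-Lem-4-1-iv-iso}. The main obstacle is conceptual rather than technical: one must be comfortable shuttling between the abstract equivalence $\tensor[]{\wt{H}}{_{\CX}}$ and the explicit representation of isomorphisms by roofs, but once one observes that \emph{both} legs of a roof representing an isomorphism must be in $\SR$ (as $\SR$ is saturated with respect to isos in the localisation by Lemma~\ref{lem:Beligiannis-Lem-4-1-consequences}\ref{item:Bel-Lem-4-1-iv-iso}), the argument is essentially a translation.
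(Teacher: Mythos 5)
Your proposal is correct and follows essentially the same route as the paper: pass through the equivalence $\tensor[]{\wt{H}}{_{\CX}}$, represent the resulting isomorphism in the localisation by a fraction with one leg in $\SR$, and deduce the other leg is in $\SR$ via Lemma~\ref{lem:Beligiannis-Lem-4-1-consequences}\ref{item:Bel-Lem-4-1-iv-iso}. The only cosmetic difference is that you write out both (i)$\Rightarrow$(ii) and (i)$\Rightarrow$(iii) (via left and right fractions respectively), whereas the paper proves only (i)$\Leftrightarrow$(ii) and declares the other case similar.
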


\begin{proof}

We only show (i)\(\iff\)(ii); the proof of (i)\(\iff\)(iii) is similar.

(ii)\(\implies\)(i): \;\;This follows from Lemma~\ref{lem:Beligiannis-Lem-4-1-consequences}\ref{item:Bel-Lem-4-1-iv-iso}. 

(i)\(\implies\)(ii): \;First, \(\tensor*[]{H}{_{\CX}}(C) \iso \tensor*[]{H}{_{\CX}}(D)\) implies \(\tensor[]{\ul{H}}{_{\CX}}(C) \iso \tensor[]{\ul{H}}{_{\CX}}(D)\) (see \eqref{eqn:H-X-comm-diag}). 
Using that \(\tensor[]{\wt{H}}{_{\CX}}\colon (\CC/[\tensor[]{\CX}{^{\perp_{0}}}]\tensor[]{)}{_{\SR}} \to \rmod{\CX}\) is an equivalence (see \eqref{eqn:GZ-localisation-diagram}), 
we see there is an isomorphism \(\phi\colon \tensor[]{L}{_{\SR}}(C) \to \tensor[]{L}{_{\SR}}(D)\) in the localisation \((\CC/[\tensor[]{\CX}{^{\perp_{0}}}]\tensor[]{)}{_{\SR}}\). Since \(\SR\) admits a calculus of left fractions in \(\CC/[\tensor[]{\CX}{^{\perp_{0}}}]\), the morphism \(\phi\) is represented by a roof/left fraction 
\(\begin{tikzcd}[column sep=0.5cm]
	C \arrow{r}{\ul{\gamma}}& A & \arrow{l}[swap]{\ul{\delta}}D
\end{tikzcd}\) 
with \(\ul{\delta}\in\SR\) (see \cite[Sec.\ I.2]{GabrielZisman-calc-of-fractions}). 
In particular, \(\tensor[]{\ul{H}}{_{\CX}}(\ul{\delta})\) is an isomorphism in \(\rmod{\CX}\) by Lemma~\ref{lem:Beligiannis-Lem-4-1-consequences}\ref{item:Bel-Lem-4-1-iv-iso}. 
Thus, \(\tensor[]{\wt{H}}{_{\CX}}(\phi) = (\tensor[]{\ul{H}}{_{\CX}}(\ul{\delta})\tensor[]{)}{^{-1}}\tensor[]{\ul{H}}{_{\CX}}(\ul{\gamma})\) implies 
\(\tensor[]{\ul{H}}{_{\CX}}(\ul{\gamma}) = \tensor[]{\ul{H}}{_{\CX}}(\ul{\delta}) \tensor[]{\wt{H}}{_{\CX}}(\phi)\) 
is a composition of isomorphisms, and hence an isomorphism itself. So \(\ul{\gamma}\in\SR\) by 
Lemma~\ref{lem:Beligiannis-Lem-4-1-consequences}\ref{item:Bel-Lem-4-1-iv-iso}. 
\end{proof}

The following result is an analogue of \cite[Lem.\ 4.2]{Jorgensen-tropical-friezes-and-the-index-in-higher-homological-algebra}.

\begin{lem}
\label{lemma9}

Suppose \(\tensor*[]{H}{_{\CX}}(C) \iso \tensor*[]{H}{_{\CX}}(D)\) in \(\rmod{\CX}\) for some objects \(C,D\in\CC\). 
Then 
\(\newindx{\CX}(C) + \newindx{\CX}(\tensor*[]{\sus}{^{-1}}C) 
	= \newindx{\CX}(D) + \newindx{\CX}(\tensor*[]{\sus}{^{-1}}D)\). 

\end{lem}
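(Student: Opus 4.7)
The plan is to apply Lemma~\ref{lemma8} to reduce the desired equality to the following claim: if \(\gamma\colon C \to A\) is any morphism in \(\CC\) with \(\ul{\gamma} \in \SR\), then
\(\newindx{\CX}(C) + \newindx{\CX}(\sus^{-1}C) = \newindx{\CX}(A) + \newindx{\CX}(\sus^{-1}A)\).
Granting this, Lemma~\ref{lemma8} furnishes morphisms \(\gamma\colon C \to A\) and \(\delta\colon D \to A\) with \(\ul{\gamma}, \ul{\delta} \in \SR\), and applying the reduced claim to each of \(\gamma\) and \(\delta\) then yields the stated equality.

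To prove the reduced claim, I would complete \(\gamma\) to a triangle
\(\sus^{-1}U \xrightarrow{\alpha} C \xrightarrow{\gamma} A \xrightarrow{\eps} U\)
in \(\CC\). The hypothesis \(\ul{\gamma} \in \SR\) implies, via Lemma~\ref{lem:Beligiannis-Lem-4-1-consequences}\ref{item:Bel-Lem-4-1-iv-iso}, that \(\tensor*[]{H}{_{\CX}}(\gamma)\) is an isomorphism in \(\rmod{\CX}\). Chasing the long exact sequence obtained from \(\CC(T,-)\) for \(T \in \CX\), the surjectivity of \(\CC(T, \gamma)\) gives \(\tensor*[]{H}{_{\CX}}(\eps) = 0\), whilst the injectivity of \(\CC(T, \gamma)\) gives \(\tensor*[]{H}{_{\CX}}(\alpha) = 0\). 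The first vanishing identifies the original triangle as an extriangle in \((\CC, \tensor[]{\BE}{_{\CX}}, \tensor[]{\fs}{_{\CX}})\), so Proposition~\ref{lemma7} produces the identity \(\newindx{\CX}(C) = \newindx{\CX}(\sus^{-1}U) + \newindx{\CX}(A)\).

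Next, I would observe that the triangle
\(\sus^{-1}C \xrightarrow{\sus^{-1}\gamma} \sus^{-1}A \xrightarrow{\sus^{-1}\eps} \sus^{-1}U \xrightarrow{-\alpha} C\),
obtained by shifting the original triangle by \(\sus^{-1}\) and rotating once, has extension \(-\alpha\). The vanishing \(\tensor*[]{H}{_{\CX}}(\alpha) = 0\) established above therefore makes this a second extriangle in \((\CC, \tensor[]{\BE}{_{\CX}}, \tensor[]{\fs}{_{\CX}})\). Proposition~\ref{lemma7} applied to this extriangle yields \(\newindx{\CX}(\sus^{-1}A) = \newindx{\CX}(\sus^{-1}C) + \newindx{\CX}(\sus^{-1}U)\). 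Eliminating \(\newindx{\CX}(\sus^{-1}U)\) between the two identities then delivers the reduced claim.

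The main obstacle I expect is identifying the correct rotation of the shifted triangle: the naive shift \(\sus^{-2}U \to \sus^{-1}C \to \sus^{-1}A \to \sus^{-1}U\) has extension \(\sus^{-1}\eps\), and there is no obvious reason for \(\tensor*[]{H}{_{\CX}}(\sus^{-1}\eps)\) to vanish, since this would essentially require control over maps from \(\sus\CX\) rather than \(\CX\). The key insight is that one further rotation instead produces a triangle whose extension is \(-\alpha\), and the vanishing of \(\tensor*[]{H}{_{\CX}}(\alpha)\) is precisely what the injectivity half of \(\tensor*[]{H}{_{\CX}}(\gamma)\) being an isomorphism guarantees, bringing Proposition~\ref{lemma7} back into play.
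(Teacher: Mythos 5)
Your proposal is correct and follows essentially the same route as the paper: reduce via Lemma~\ref{lemma8} to a single morphism whose image in \(\CC/[\tensor[]{\CX}{^{\perp_{0}}}]\) is regular, complete it to a triangle, use Lemma~\ref{lem:Beligiannis-Lem-4-1-consequences} (your long exact sequence argument is just this lemma unwound) to see that both the original triangle and its once-rotated desuspension are extriangles in \((\CC,\tensor[]{\BE}{_{\CX}},\tensor[]{\fs}{_{\CX}})\), and apply Proposition~\ref{lemma7} to each before eliminating the common term. Your explicit two-fold application of the reduced claim to the roof \(C\to A\leftarrow D\) is, if anything, slightly more careful than the paper's phrasing of the reduction.
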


\begin{proof}

By Lemma~\ref{lemma8}, it is enough to prove this when there is a morphism \(\psi\colon C \to D\) with \(\ul{\psi}\in\SR\), i.e.\ when \(\tensor*[]{H}{_{\CX}}(\psi)\) is an isomorphism (see Lemma~\ref{lem:Beligiannis-Lem-4-1-consequences}). 
Completing \(\psi\) to a triangle in \(\CC\) yields 
\(\begin{tikzcd}[column sep=0.5cm]
	B \arrow{r}{\chi}& C \arrow{r}{\psi}& D \arrow{r}{\phi}& \sus B.
\end{tikzcd}\) 
Rotating this triangle yields another triangle 
\(\begin{tikzcd}[column sep=1.3cm]
	\tensor*[]{\sus}{^{-1}}C \arrow{r}{-\tensor*[]{\sus}{^{-1}}\psi}& \tensor*[]{\sus}{^{-1}}D \arrow{r}{-\tensor*[]{\sus}{^{-1}}\phi}& B \arrow{r}{\chi}& C.
\end{tikzcd}\) 
Since \(\tensor*[]{H}{_{\CX}}(\psi)\) is an isomorphism, we have \(\tensor*[]{H}{_{\CX}}(\phi) = \tensor*[]{H}{_{\CX}}(\chi) = 0\) by 
Lemma~\ref{lem:Beligiannis-Lem-4-1-consequences}. 
Thus, by Proposition~\ref{lemma7} applied to these triangles, we see that
\begin{align*}
	\newindx{\CX}(B) - \newindx{\CX}(C) + \newindx{\CX}(D) &=  0,\\
	\newindx{\CX}(\tensor*[]{\sus}{^{-1}}C) - \newindx{\CX}(\tensor*[]{\sus}{^{-1}}D) + \newindx{\CX}(B) &=  0.
\end{align*}
Combining these gives the desired result.
\end{proof}

The next lemma gives a generalisation of \cite[Lem.\ 1.11(ii)]{HolmJorgensen-generalized-friezes-and-a-modified-caldero-chapoton-map-depending-on-a-rigid-object-1}, and 
it can be checked that the proof given there works for our purposes as well with the addition of the ``General Case'' we provide.
However, the proof we give below is inspired by \cite[Lem.\ 3.6, Prop.\ 3.7]{BuanMarsh-BM1}. Buan--Marsh attribute the proof of \cite[Lem.\ 3.6]{BuanMarsh-BM1} to Yann Palu. 
Recall that the subcategory \(\add(\CX*\sus\CX)\) of \(\CC\) consists of direct summands of objects \(Z\) in \(\CC\) that admit a triangle 
\(\begin{tikzcd}[column sep=0.5cm]
	\tensor[]{X}{_{0}}\arrow{r} & Z \arrow{r}& \sus \tensor[]{X}{_{1}} \arrow{r}&\sus \tensor[]{X}{_{0}}
\end{tikzcd}\) 
with \(\tensor[]{X}{_{i}}\in\CX\).

\begin{lem}
\label{lemma10}

If \(Z\in\add(\CX*\sus\CX)\) and \(C\in\CC\), then the mapping 
\[
(\tensor*[]{H}{_{\CX}}\tensor[]{)}{_{Z,C}}\colon \CC(Z,C) \to (\rmod{\CX})(\tensor*[]{H}{_{\CX}}(Z), \tensor*[]{H}{_{\CX}}(C))
\]
of \(\Hom\)-sets induced by \(\tensor*[]{H}{_{\CX}}\) is surjective. 
\end{lem}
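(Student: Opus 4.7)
The plan is to reduce to the case $Z\in\CX*\sus\CX$ and then to lift a given natural transformation against a projective presentation of $H_{\CX}(Z)$ via Yoneda and the triangulated lifting axiom.

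\emph{Reduction to $Z\in\CX*\sus\CX$.} Suppose $Z$ is a direct summand of some $Z'\in\CX*\sus\CX$, with inclusion $\iota\colon Z\to Z'$ and retraction $\pi\colon Z'\to Z$. Since $H_{\CX}$ is additive, $H_{\CX}(\iota)$ and $H_{\CX}(\pi)$ exhibit $H_{\CX}(Z)$ as a summand of $H_{\CX}(Z')$. Given $\phi\colon H_{\CX}(Z)\to H_{\CX}(C)$, the composite $\phi\circ H_{\CX}(\pi)\colon H_{\CX}(Z')\to H_{\CX}(C)$ lifts, by the case of $Z'$, to some $\psi'\colon Z'\to C$ with $H_{\CX}(\psi')=\phi\circ H_{\CX}(\pi)$. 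Then $\psi\deff\psi'\circ\iota\colon Z\to C$ satisfies $H_{\CX}(\psi)=\phi\circ H_{\CX}(\pi)\circ H_{\CX}(\iota)=\phi$.

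\emph{Main case $Z\in\CX*\sus\CX$.} By assumption there is a triangle
\[
\begin{tikzcd}
X_{1}\arrow{r}{\xi_{1}}& X_{0}\arrow{r}{\xi_{0}}& Z\arrow{r}{\eta}&\sus X_{1}
\end{tikzcd}
\]
in $\CC$ with $X_{i}\in\CX$. Applying the cohomological functor $H_{\CX}$ and restricting yields the exact sequence
\[
\begin{tikzcd}[column sep=1.2cm]
\CX(-,X_{1})\arrow{r}{\CX(-,\xi_{1})}& \CX(-,X_{0})\arrow{r}{\epsilon}& H_{\CX}(Z)\arrow{r}& 0
\end{tikzcd}
\]
in $\rmod{\CX}$, which is a projective presentation of $H_{\CX}(Z)$. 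Given $\phi\colon H_{\CX}(Z)\to H_{\CX}(C)$, the composite $\phi\circ\epsilon\colon \CX(-,X_{0})\to H_{\CX}(C)=\restr{\CC(-,C)}{\CX}$ corresponds by the Yoneda lemma to a morphism $f\colon X_{0}\to C$ in $\CC$.

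\emph{Lifting via the triangulated axiom.} The composite $(\phi\circ\epsilon)\circ\CX(-,\xi_{1})$ is zero by exactness. Evaluating at $X_{1}\in\CX$ on the identity morphism shows $f\circ\xi_{1}=0$ in $\CC$. Applying the cohomological functor $\CC(-,C)$ to the triangle above and using $\CC(\xi_{1},C)(f)=0$, there exists $\psi\colon Z\to C$ with $\psi\circ\xi_{0}=f$.

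\emph{Verification.} It remains to check $H_{\CX}(\psi)=\phi$. Since $\epsilon$ is an epimorphism in $\rmod{\CX}$, it suffices to verify equality after precomposing with $\epsilon$. Now $H_{\CX}(\psi)\circ\epsilon=H_{\CX}(\psi)\circ\CX(-,\xi_{0})=H_{\CX}(\psi\circ\xi_{0})=H_{\CX}(f)$, and this equals $\phi\circ\epsilon$ by the Yoneda identification defining $f$. I expect the only mildly delicate point to be the careful tracking of the Yoneda correspondence, but this is entirely routine; the triangulated lifting step and the summand reduction are straightforward.
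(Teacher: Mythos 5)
Your proof is correct, but it takes a genuinely different route from the paper. The paper works through the Gabriel--Zisman localisation: it shows that \((L_{\SR}\circ\ul{(-)})_{Z,C}\colon \CC(Z,C)\to(\CC/[\tensor[]{\CX}{^{\perp_{0}}}]\tensor[]{)}{_{\SR}}(Z,C)\) is surjective by taking a left fraction \(Z\xrightarrow{\ul{\gamma}}A\xleftarrow{\ul{\delta}}C\) with \(\ul{\delta}\in\SR\) and straightening it via a morphism-of-triangles argument (this is where rigidity kills the connecting map \(\ul{\beta}\)), and then invokes the equivalence \(\tensor[]{\wt{H}}{_{\CX}}\) from diagram \eqref{eqn:GZ-localisation-diagram}. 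You instead argue directly in \(\rmod{\CX}\): the defining triangle of \(Z\) gives a projective presentation of \(\tensor*[]{H}{_{\CX}}(Z)\), Yoneda converts the given natural transformation into a morphism \(f\colon X_0\to C\), and the weak-cokernel property of triangles lifts \(f\) to \(\psi\colon Z\to C\). This is essentially the classical fullness argument for \(\tensor*[]{H}{_{\CX}}\) on \(\CX*\sus\CX\) (in the spirit of Buan--Marsh), and it is more elementary and self-contained, avoiding the calculus-of-fractions machinery entirely; the paper's route has the advantage of reusing the localisation framework that is already set up for Lemma~\ref{lemma8} and elsewhere. One point you should make explicit: the right-exactness of your presentation, i.e.\ that \(\epsilon=\tensor*[]{H}{_{\CX}}(\xi_0)\) is an epimorphism, follows from \(\tensor*[]{H}{_{\CX}}(\sus X_1)=0\), which is exactly where the rigidity of \(\CX\) enters your argument. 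Your summand reduction matches the paper's "General Case".
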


\begin{proof}
We will show that if \(Z\in\add(\CX*\sus\CX)\) and \(C\in\CC\), then 
\[
(\tensor[]{L}{_{\SR}}\circ\ul{(-)}\tensor[]{)}{_{Z,C}}\colon \CC(Z,C) \to (\CC/[\tensor[]{\CX}{^{\perp_{0}}}]\tensor[]{)}{_{\SR}}(Z, C)
\]
is surjective. The result then follows from diagrams \eqref{eqn:H-X-comm-diag} and \eqref{eqn:GZ-localisation-diagram}, noting that \(\tensor[]{\wt{H}}{_{\CX}}\) is an equivalence.

\underline{Special Case}:\; 
$Z\in \CX * \sus \CX$, i.e.\ 
there is a triangle 
\begin{equation}\label{eqn:special-case-triangle}
\begin{tikzcd}[column sep=0.5cm]
	\tensor[]{X}{_{1}} \arrow{r}{} & \tensor[]{X}{_{0}} \arrow{r}{\alpha} & Z \arrow{r}{\beta} & \sus \tensor[]{X}{_{1}}
\end{tikzcd}
\end{equation}
in \(\CC\) with \(\tensor[]{X}{_{i}}\in\CX\). 

A morphism in \((\CC/[\tensor[]{\CX}{^{\perp_{0}}}]\tensor[]{)}{_{\SR}}(Z, C)\) is given by a left fraction 
\(\begin{tikzcd}[column sep=0.5cm]
	Z \arrow{r}{\ul{\gamma}}& A & \arrow{l}[swap]{\ul{\delta}}C
\end{tikzcd}\) 
with \(\ul{\delta}\in\SR\). 
We claim that the morphism \(\ul{\gamma}\) factors through \(\ul{\delta}\) (cf.\ \cite[Lem.\ 3.6]{BuanMarsh-BM1}). 
As \(Z\in\CX*\sus\CX\), there is a triangle
\eqref{eqn:special-case-triangle} in \(\CC\) with \(\tensor[]{X}{_{i}}\in\CX\). 
There is also a triangle
\(\begin{tikzcd}[column sep=0.5cm]
	\hspace{-5pt}B \arrow{r}{\eps} & C \arrow{r}{\delta} & A \arrow{r}{\zeta} & \sus B,
\end{tikzcd}\)
where \(\eps, \zeta \in[\tensor[]{\CX}{^{\perp_{0}}}]\) by Lemma~\ref{lem:Beligiannis-Lem-4-1-consequences} as \(\ul{\delta}\in\SR\). 
Since \(\tensor[]{X}{_{0}}\) lies in \(\CX\) and \(\zeta\) lies in the ideal \([\tensor[]{\CX}{^{\perp_{0}}}]\), the composition 
\(\zeta\gamma\alpha\) is zero. Hence, there is a morphism of triangles (given by the solid arrows in the following diagram)
\[
\begin{tikzcd}[column sep=1.3cm]
	\tensor[]{X}{_{1}} \arrow{r}{} \arrow{d}{\eta} & \tensor[]{X}{_{0}} \arrow{r}{\alpha} \arrow{d}& Z \arrow{r}{\beta} \arrow{d}[swap]{\gamma} \arrow[dotted]{dl}[swap]{\tensor[]{\eps}{_{1}}}& \sus \tensor[]{X}{_{1}} \arrow{d}{\sus \eta} \arrow[dotted]{dl}[swap]{\tensor[]{\eps}{_{2}}}\\
	B \arrow{r}{\eps} & C \arrow{r}{\delta} & A \arrow{r}{\zeta} & \sus B,
\end{tikzcd}
\]
using the axioms of a triangulated category (see \cite[(TR\(4\))]{HolmJorgensen-tri-cats-intro}). 
The morphism \(\eps\eta\) also vanishes as \(\tensor[]{X}{_{1}}\in\CX\) and  \(\eps\in[\tensor[]{\CX}{^{\perp_{0}}}]\). 
Since $\tensor[]{X}{_{1}}\in\CX$ and $\eps\in[\tensor[]{\CX}{^{\perp_{0}}}]$, 
we have that $(\sus\eps)(\sus\eta) = 0$ and so there is a morphism $\tensor[]{\eps}{_{2}}\colon \sus \tensor[]{X}{_{1}} \to A$ such that $\zeta \tensor[]{\eps}{_{2}} = \sus\eta$. 
Furthermore, the vanishing of $\zeta(\gamma - \tensor[]{\eps}{_{2}}\beta)$ implies the existence of a morphism $\tensor[]{\eps}{_{1}}\colon Z\to C$ with $\delta\tensor[]{\eps}{_{1}} = \gamma - \tensor[]{\eps}{_{2}}\beta$, or \(\gamma = \delta\tensor[]{\eps}{_{1}} + \tensor[]{\eps}{_{2}}\beta\) (see the diagram just above).
Note that \(\ul{\beta} = 0\) in \(\CC/[\tensor[]{\CX}{^{\perp_{0}}}]\) as \(\CX\) is rigid. 
Hence, we have \(\ul{\gamma} = \ul{\delta\tensor[]{\eps}{_{1}}}\) in \(\CC/[\tensor[]{\CX}{^{\perp_{0}}}]\), as claimed. 
Lastly, we note this implies 
\(
\tensor[]{L}{_{\SR}}(\ul{\tensor[]{\eps}{_{1}}}) 
	= \tensor[]{L}{_{\SR}}(\ul{\delta}\tensor[]{)}{^{-1}}\tensor[]{L}{_{\SR}}(\ul{\gamma})
\), finishing the special case.

\underline{General Case}:\; 
\(Z\in\add(\CX*\sus\CX)\). 

In this case, there is an object \(Z'\in\CC\) and a triangle 
\(\begin{tikzcd}[column sep=0.5cm]
	\tensor[]{X}{_{1}}\arrow{r} & \tensor[]{X}{_{0}}\arrow{r} & Z \oplus Z'\arrow{r}& \sus \tensor[]{X}{_{1}}
\end{tikzcd}\)  
with \(\tensor[]{X}{_{i}}\in\CX\). The canonical inclusion 
\(\iota \deff 
\begin{psmallmatrix}
	\iden{Z} \\
	0
\end{psmallmatrix}
\colon Z\to Z\oplus Z'\) 
induces the following commutative diagram
\[
\begin{tikzcd}[column sep=4cm, row sep=1.3cm]
\CC(Z\oplus Z', C) \arrow{r}{\CC(\iota, C)}\arrow[two heads]{d}[swap]{(\tensor[]{L}{_{\SR}}\circ\ul{(-)})_{Z\oplus Z',C}}& \CC(Z,C) \arrow{d}{(\tensor[]{L}{_{\SR}}\circ\ul{(-)}\tensor[]{)}{_{Z,C}}}\\
(\CC/[\tensor[]{\CX}{^{\perp_{0}}}]\tensor[]{)}{_{\SR}}(Z \oplus Z', C)
	\arrow[two heads]{r}{(\CC/[\tensor[]{\CX}{^{\perp_{0}}}]\tensor[]{)}{_{\SR}}(\tensor[]{L}{_{\SR}}(\ul{\iota}), C)}
	& (\CC/[\tensor[]{\CX}{^{\perp_{0}}}]\tensor[]{)}{_{\SR}}(Z, C).
\end{tikzcd}
\]
The left vertical map is surjective by the Special Case and the lower horizontal map is a split epimorphism, and hence the composition is surjective. It follows from commutativity that the right vertical map is also surjective.
\end{proof}

The next result is a generalisation of \cite[Lem.\ 2.7]{Jorgensen-tropical-friezes-and-the-index-in-higher-homological-algebra}. The proof given there works in our setting with minor modifications: 
use Section~\ref{subsec:approximation}\ref{item:3iii} and Lemma~\ref{lemma10} in place of \cite[Lem.\ 2.4]{Jorgensen-tropical-friezes-and-the-index-in-higher-homological-algebra}; and note 
\(\tensor*[]{H}{_{\CX}}(\sus \CX) = 0\) as \(\CX\) is rigid.

\begin{lem}
\label{lemma11}

Suppose 
\(\begin{tikzcd}[column sep=0.5cm]
	0\arrow{r}& L \arrow{r}{\iota}& M \arrow{r}{\sigma}& N \arrow{r}&0 
\end{tikzcd}\) 
is a short exact sequence in \(\rmod{\CX}\). Then, up to isomorphism, it has the form 
\[
 \begin{tikzcd}[column sep=1.2cm]
		0\arrow{r} & \tensor*[]{H}{_{\CX}}(A) \arrow{r}{\tensor*[]{H}{_{\CX}}(\alpha)}& \tensor*[]{H}{_{\CX}}(B) \arrow{r}{\tensor*[]{H}{_{\CX}}(\beta)}& \tensor*[]{H}{_{\CX}}(C) \arrow{r} & 0
\end{tikzcd}
\]
for a triangle 
\(
\begin{tikzcd}[column sep=0.5cm]
	A \arrow{r}{\alpha}& B \arrow{r}{\beta}& C \arrow{r}{\gamma}& \sus A
\end{tikzcd}
\)
in \(\CC\), with \(A,B,C\in \CX * \sus \CX\). 
\end{lem}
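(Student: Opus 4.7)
My plan is to lift the given short exact sequence \(0 \to L \to M \to N \to 0\) to a distinguished triangle in \(\CC\) using a horseshoe-type argument in \(\rmod{\CX}\) followed by an application of the \(3 \times 3\) lemma for triangulated categories.

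To begin, I apply Section~\ref{subsec:approximation}\ref{item:3iii} to both \(L\) and \(N\), obtaining objects \(A, C \in \CX * \sus \CX\) together with distinguished triangles
\[
X_1^L \xrightarrow{d^L} X_0^L \to A \to \sus X_1^L
\quad\text{and}\quad
X_1^N \xrightarrow{d^N} X_0^N \to C \to \sus X_1^N
\]
in \(\CC\), where \(X_i^L, X_i^N \in \CX\). Applying the cohomological functor \(\tensor*[]{H}{_{\CX}}\) to these triangles and using that \(\tensor*[]{H}{_{\CX}}(\sus\CX) = 0\) by rigidity of \(\CX\), I obtain projective presentations \(\CX(-, X_1^L) \to \CX(-, X_0^L) \to L \to 0\) and \(\CX(-, X_1^N) \to \CX(-, X_0^N) \to N \to 0\) in \(\rmod{\CX}\). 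Because these representable functors are projective in \(\rmod{\CX}\), the classical horseshoe lemma produces a commutative \(3 \times 3\) diagram with exact rows and columns: the top two rows are the split exact sequences with middle terms \(\CX(-, X_1^L \oplus X_1^N)\) and \(\CX(-, X_0^L \oplus X_0^N)\), and the bottom row is the given short exact sequence. By Yoneda's lemma, the middle vertical map in this diagram is induced by a morphism \(d^M = \bigl(\begin{smallmatrix} d^L & \tau \\ 0 & d^N \end{smallmatrix}\bigr) \colon X_1^L \oplus X_1^N \to X_0^L \oplus X_0^N\) in \(\CC\) for some \(\tau \colon X_1^N \to X_0^L\).

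I next complete \(d^M\) to a distinguished triangle in \(\CC\), giving an object \(B \in \CX * \sus \CX\) with \(\tensor*[]{H}{_{\CX}}(B) \iso M\). I now have a commutative ladder between the two split distinguished triangles \(X_1^L \to X_1^L \oplus X_1^N \to X_1^N \to \sus X_1^L\) and \(X_0^L \to X_0^L \oplus X_0^N \to X_0^N \to \sus X_0^L\) via the vertical maps \(d^L, d^M, d^N\). Invoking the \(3 \times 3\) lemma for triangulated categories (obtained by repeated use of the octahedral axiom), I extend this to a full \(3 \times 3\) diagram of distinguished triangles, producing in particular a distinguished triangle \(A \to B \to C \to \sus A\) in \(\CC\) that is compatible with the entire diagram.

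Finally, applying \(\tensor*[]{H}{_{\CX}}\) to this last triangle yields an exact sequence \(\tensor*[]{H}{_{\CX}}(A) \to \tensor*[]{H}{_{\CX}}(B) \to \tensor*[]{H}{_{\CX}}(C) \to \tensor*[]{H}{_{\CX}}(\sus A)\). By naturality of the horseshoe and the \(3 \times 3\) construction, these maps correspond, via the already-chosen isomorphisms \(L \iso \tensor*[]{H}{_{\CX}}(A)\), \(M \iso \tensor*[]{H}{_{\CX}}(B)\), \(N \iso \tensor*[]{H}{_{\CX}}(C)\), to the given inclusion \(L \to M\) and surjection \(M \to N\). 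Since these are respectively mono and epi, the connecting maps \(\tensor*[]{H}{_{\CX}}(\tensor*[]{\sus}{^{-1}}C) \to \tensor*[]{H}{_{\CX}}(A)\) and \(\tensor*[]{H}{_{\CX}}(C) \to \tensor*[]{H}{_{\CX}}(\sus A)\) vanish, giving the desired short exact sequence \(0 \to \tensor*[]{H}{_{\CX}}(A) \to \tensor*[]{H}{_{\CX}}(B) \to \tensor*[]{H}{_{\CX}}(C) \to 0\), which is isomorphic to the given one. The main obstacle will be the careful invocation of the \(3 \times 3\) lemma together with the verification that the induced map \(A \to B\) produces, upon applying \(\tensor*[]{H}{_{\CX}}\), precisely the monomorphism \(L \hookrightarrow M\) (and similarly for \(B \to C\)); this follows from the universal property of the cokernels in the projective presentations together with the commutativity of the \(3 \times 3\) diagram.
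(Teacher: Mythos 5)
Your proof is correct, but it takes a genuinely different route from the paper's. The paper obtains Lemma~\ref{lemma11} by running the proof of \cite[Lem.\ 2.7]{Jorgensen-tropical-friezes-and-the-index-in-higher-homological-algebra}: realise \(M \iso \tensor*[]{H}{_{\CX}}(B)\) and \(N \iso \tensor*[]{H}{_{\CX}}(C)\) via Section~\ref{subsec:approximation}\ref{item:3iii}, lift the epimorphism \(\sigma\) to an actual morphism \(\beta\colon B \to C\) in \(\CC\) using Lemma~\ref{lemma10}, and take \(A\) to be the cocone of \(\beta\); the work then lies in showing that the connecting maps die under \(\tensor*[]{H}{_{\CX}}\), that \(\tensor*[]{H}{_{\CX}}(A)\iso L\) compatibly, and that \(A\in\CX*\sus\CX\). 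Your horseshoe-plus-\(3\times 3\) argument instead builds all three objects from projective presentations, which hands you \(A,B,C\in\CX*\sus\CX\) for free and avoids Lemma~\ref{lemma10} entirely, at the price of invoking the \(3\times 3\) lemma. One point there deserves care: that lemma takes as input a single commutative square rather than a full morphism of triangles, so a priori the third column it produces is a triangle on some map \(w\colon X_1^N\to X_0^N\) which need not be your \(d^N\). This is harmless in your situation, since commutativity of the square relating \(w\) to \(d^M\) through the split epimorphism \(X_1^L\oplus X_1^N\onto X_1^N\) forces \(w=d^N\), so the third column really is the chosen presentation of \(N\); it would be worth saying this explicitly. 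With that settled, your identification of \(\tensor*[]{H}{_{\CX}}(\alpha)\) with \(\iota\) and of \(\tensor*[]{H}{_{\CX}}(\beta)\) with \(\sigma\), via the epimorphisms from the representables onto the cokernels, is exactly right, and the vanishing of the connecting maps follows as you say.
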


The next lemma is an analogue of \cite[Lem.\ 4.3]{Jorgensen-tropical-friezes-and-the-index-in-higher-homological-algebra} and its proof is similar. Thus, we omit the proof and note that applications of 
\cite[Lem.\ 2.7]{Jorgensen-tropical-friezes-and-the-index-in-higher-homological-algebra}, 
\cite[Lem.\ 4.2]{Jorgensen-tropical-friezes-and-the-index-in-higher-homological-algebra}, and 
\cite[Lem.\ 3.7]{Jorgensen-tropical-friezes-and-the-index-in-higher-homological-algebra} 
may be replaced by applications of 
Lemma~\ref{lemma11}, 
Lemma~\ref{lemma9}, and 
Proposition~\ref{lemma7}, respectively.

\begin{lem}
\label{lemma12}

For a short exact sequence 
\begin{equation}\label{eqn:ses-of-lemma12}
\begin{tikzcd}%[column sep=0.7cm]
	0\arrow{r}& \tensor*[]{H}{_{\CX}}(X) \arrow{r}& \tensor*[]{H}{_{\CX}}(Y) \arrow{r}& \tensor*[]{H}{_{\CX}}(Z) \arrow{r}&0 
\end{tikzcd}
\end{equation}
in \(\rmod{\CX}\), we have 
\[
\newindx{\CX}(Y) + \newindx{\CX}(\tensor*[]{\sus}{^{-1}}Y) = \newindx{\CX}(X) + \newindx{\CX}(\tensor*[]{\sus}{^{-1}}X) + \newindx{\CX}(Z) + \newindx{\CX}(\tensor*[]{\sus}{^{-1}}Z).
\]

\end{lem}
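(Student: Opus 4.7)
The plan is to reduce the statement to a triangle in \(\CC\) via Lemma~\ref{lemma11}, and then to apply Proposition~\ref{lemma7} twice---once for that triangle and once for its \(\tensor*[]{\sus}{^{-1}}\)-shift. First, Lemma~\ref{lemma11} supplies a triangle
\(\begin{tikzcd}[column sep=0.6cm]
A \arrow{r}{\alpha} & B \arrow{r}{\beta} & C \arrow{r}{\gamma} & \sus A
\end{tikzcd}\)
in \(\CC\) such that, up to isomorphism, the image of this triangle under \(\tensor*[]{H}{_{\CX}}\) realises \eqref{eqn:ses-of-lemma12}. In particular there are isomorphisms \(\tensor*[]{H}{_{\CX}}(X) \iso \tensor*[]{H}{_{\CX}}(A)\), \(\tensor*[]{H}{_{\CX}}(Y) \iso \tensor*[]{H}{_{\CX}}(B)\) and \(\tensor*[]{H}{_{\CX}}(Z) \iso \tensor*[]{H}{_{\CX}}(C)\), so three applications of Lemma~\ref{lemma9} reduce the desired identity to
\[
\newindx{\CX}(B) + \newindx{\CX}(\tensor*[]{\sus}{^{-1}}B) = \newindx{\CX}(A) + \newindx{\CX}(\tensor*[]{\sus}{^{-1}}A) + \newindx{\CX}(C) + \newindx{\CX}(\tensor*[]{\sus}{^{-1}}C).
\]

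Next, I extract two vanishing conditions from the exactness of \eqref{eqn:ses-of-lemma12}. Surjectivity of \(\tensor*[]{H}{_{\CX}}(\beta)\) yields \(\tensor*[]{H}{_{\CX}}(\gamma) = 0\) directly from Lemma~\ref{lem:Beligiannis-Lem-4-1-consequences}\ref{item:Bel-Lem-4-1-ii-epi}. For the injectivity of \(\tensor*[]{H}{_{\CX}}(\alpha)\), I would apply part~\ref{item:Bel-Lem-4-1-iii-mono} of the same lemma to the rotated triangle \(\tensor*[]{\sus}{^{-1}}C \to A \to B \to C\), which forces \(\ul{\tensor*[]{\sus}{^{-1}}\gamma} = 0\) in \(\CC/[\tensor[]{\CX}{^{\perp_{0}}}]\), and then invoke part~\ref{item:Bel-Lem-4-1-i-zero} to conclude \(\tensor*[]{H}{_{\CX}}(\tensor*[]{\sus}{^{-1}}\gamma) = 0\). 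The first vanishing permits applying Proposition~\ref{lemma7} to the original triangle, giving \(\newindx{\CX}(B) = \newindx{\CX}(A) + \newindx{\CX}(C)\). The second vanishing permits applying the same proposition to the triangle \(\tensor*[]{\sus}{^{-1}}A \to \tensor*[]{\sus}{^{-1}}B \to \tensor*[]{\sus}{^{-1}}C \to A\) obtained by applying \(\tensor*[]{\sus}{^{-1}}\) to the original (whose connecting morphism is \(\tensor*[]{\sus}{^{-1}}\gamma\)), yielding \(\newindx{\CX}(\tensor*[]{\sus}{^{-1}}B) = \newindx{\CX}(\tensor*[]{\sus}{^{-1}}A) + \newindx{\CX}(\tensor*[]{\sus}{^{-1}}C)\). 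Summing the two identities concludes the reduced claim.

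The main obstacle I anticipate is the second vanishing: exactness of \eqref{eqn:ses-of-lemma12} at its left end is naturally packaged as data about the triangle \(A \to B \to C \to \sus A\) itself, and it must be converted into a vanishing statement about the \emph{desuspended} triangle. The conversion passes through Lemma~\ref{lem:Beligiannis-Lem-4-1-consequences} twice---once to enter the ideal quotient \(\CC/[\tensor[]{\CX}{^{\perp_{0}}}]\) from an injectivity hypothesis, and once to leave it again via a vanishing hypothesis. The conceptual upshot is that a single triangle in \(\CC\) gives rise to two distinct extriangles in \((\CC, \tensor[]{\BE}{_{\CX}}, \tensor[]{\fs}{_{\CX}})\): the triangle itself and its \(\tensor*[]{\sus}{^{-1}}\)-shift. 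Once this observation is in hand, the remainder of the argument is a routine bookkeeping exercise with Lemma~\ref{lemma9} and Proposition~\ref{lemma7}.
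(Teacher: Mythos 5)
Your proof is correct and follows essentially the route the paper intends: the paper omits the argument but prescribes exactly the substitutions you make (Lemma~\ref{lemma11} to realise the sequence by a triangle with all terms in \(\CX*\sus\CX\), Lemma~\ref{lemma9} to transfer the index identity along the isomorphisms \(\tensor*[]{H}{_{\CX}}(X)\iso\tensor*[]{H}{_{\CX}}(A)\) etc., and Proposition~\ref{lemma7} applied to the triangle and its desuspension, with the two vanishing conditions extracted from Lemma~\ref{lem:Beligiannis-Lem-4-1-consequences}). The only cosmetic point is that the connecting morphism of the desuspended triangle is \(-\tensor*[]{\sus}{^{-1}}\gamma\) rather than \(\tensor*[]{\sus}{^{-1}}\gamma\) under the usual sign conventions, which is harmless since \(\tensor*[]{H}{_{\CX}}\) is additive.
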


The failure of the index \(\newindx{\CX}\) to be additive on triangles in \(\CC\) can be measured by a homomorphism \(\tensor[]{\theta}{_{\CX}} \), which we now define.

\begin{prop}
\label{prop:definition-of-theta-sub-CX}

There is a well-defined group homomorphism 
\[
\begin{tikzcd}[column sep= 1.3cm, ampersand replacement=\&]
	\begin{aligned}[t]
		\tensor[]{K}{_{0}}(\rmod{\CX})& \\
		[\tensor*[]{H}{_{\CX}}(C)]& 
	\end{aligned} 
\arrow{r}{\tensor[]{\theta}{_{\CX}}} \arrow[maps to, yshift={-0.75cm}]{r}\&
	\begin{aligned}[t]
		&\tensor[]{K}{_{0}}(\CC,\tensor[]{\BE}{_{\CX}},\tensor[]{\fs}{_{\CX}}) \\
		 &\newindx{\CX}(C) + \newindx{\CX}(\tensor*[]{\sus}{^{-1}}C).
	\end{aligned} 
\end{tikzcd}
\]
\end{prop}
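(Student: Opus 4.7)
The plan is to build \(\tensor[]{\theta}{_{\CX}}\) in two stages: first define it on (isomorphism classes of) objects of \(\rmod{\CX}\), then check it descends through the defining relations of \(\tensor[]{K}{_{0}}(\rmod{\CX})\). Recall that \(\tensor[]{K}{_{0}}(\rmod{\CX})\) is the free abelian group on iso-classes of objects of \(\rmod{\CX}\) modulo the subgroup generated by \([L]-[M]+[N]\) coming from short exact sequences \(0\to L\to M\to N\to 0\) in \(\rmod{\CX}\).

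First, I would use Subsection~\ref{subsec:approximation}\ref{item:3iii}: for every \(M\in\rmod{\CX}\) there exists an object \(C\in\CC\) (in fact \(C\in\CX * \sus\CX\)) with \(M\iso \tensor*[]{H}{_{\CX}}(C)\). Thus the rule \([M]\mapsto \newindx{\CX}(C)+\newindx{\CX}(\tensor*[]{\sus}{^{-1}}C)\) is at least defined once a representative \(C\) is chosen. To show independence of this choice, I would apply Lemma~\ref{lemma9}: if \(\tensor*[]{H}{_{\CX}}(C)\iso \tensor*[]{H}{_{\CX}}(D)\), then
\[
\newindx{\CX}(C)+\newindx{\CX}(\tensor*[]{\sus}{^{-1}}C) = \newindx{\CX}(D)+\newindx{\CX}(\tensor*[]{\sus}{^{-1}}D)
\]
in \(\tensor[]{K}{_{0}}(\CC,\tensor[]{\BE}{_{\CX}},\tensor[]{\fs}{_{\CX}})\). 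This gives a well-defined function on iso-classes, and extending \(\BZ\)-linearly yields a homomorphism from the free abelian group on iso-classes of \(\rmod{\CX}\) to \(\tensor[]{K}{_{0}}(\CC,\tensor[]{\BE}{_{\CX}},\tensor[]{\fs}{_{\CX}})\).

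Next, I would verify that this homomorphism kills the relations coming from short exact sequences. Take an arbitrary short exact sequence
\(
\begin{tikzcd}[column sep=0.5cm]
0\arrow{r}&L\arrow{r}&M\arrow{r}&N\arrow{r}&0
\end{tikzcd}
\)
in \(\rmod{\CX}\). By Lemma~\ref{lemma11}, up to isomorphism this sequence equals the image under \(\tensor*[]{H}{_{\CX}}\) of a triangle
\(
\begin{tikzcd}[column sep=0.5cm]
A\arrow{r}&B\arrow{r}&C\arrow{r}&\sus A
\end{tikzcd}
\)
in \(\CC\), yielding the short exact sequence
\(
\begin{tikzcd}[column sep=0.7cm]
0\arrow{r}&\tensor*[]{H}{_{\CX}}(A)\arrow{r}&\tensor*[]{H}{_{\CX}}(B)\arrow{r}&\tensor*[]{H}{_{\CX}}(C)\arrow{r}&0
\end{tikzcd}
\).
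Applying Lemma~\ref{lemma12} to this sequence gives precisely
\[
\big(\newindx{\CX}(B)+\newindx{\CX}(\tensor*[]{\sus}{^{-1}}B)\big) = \big(\newindx{\CX}(A)+\newindx{\CX}(\tensor*[]{\sus}{^{-1}}A)\big) + \big(\newindx{\CX}(C)+\newindx{\CX}(\tensor*[]{\sus}{^{-1}}C)\big),
\]
which says exactly that \([L]-[M]+[N]\) maps to \(0\). Hence the homomorphism factors through \(\tensor[]{K}{_{0}}(\rmod{\CX})\), yielding the desired \(\tensor[]{\theta}{_{\CX}}\).

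The hard work has already been done in Lemmas~\ref{lemma9}, \ref{lemma11}, and \ref{lemma12}; the remaining task is purely formal assembly. The only conceptual point to watch is that the well-definedness of the assignment on iso-classes depends crucially on the hypothesis that \(\CX\) is rigid and contravariantly finite, which is what makes Lemma~\ref{lemma9} (and the localisation machinery feeding into it) available. I do not anticipate any serious obstacle beyond carefully stringing these lemmas together.
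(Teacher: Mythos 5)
Your proposal is correct and follows essentially the same route as the paper: Subsection~\ref{subsec:approximation}\ref{item:3iii} together with Lemma~\ref{lemma9} gives well-definedness on isomorphism classes, and Lemmas~\ref{lemma11} and \ref{lemma12} show the short-exact-sequence relations are killed. Your explicit appeal to Lemma~\ref{lemma11} merely spells out a step the paper's proof leaves implicit when invoking Lemma~\ref{lemma12}.
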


\begin{proof}

Combining Section~\ref{subsec:approximation}\ref{item:3iii} and Lemma~\ref{lemma9} shows that the assignment 
\[
\tensor*[]{H}{_{\CX}}(C)\mapsto \newindx{\CX}(C) + \newindx{\CX}(\tensor*[]{\sus}{^{-1}}C),
\]
where \(C\in\CC\), induces a map 
\(\rmod{\CX} \to \tensor[]{K}{_{0}}(\CC,\tensor[]{\BE}{_{\CX}},\tensor[]{\fs}{_{\CX}})\) which is constant on isomorphism classes. Furthermore, Lemma~\ref{lemma12} implies that it factors through \(\tensor[]{K}{_{0}}(\rmod{\CX})\). 
\end{proof}

The following lemma is a corresponding version of \cite[Lem.\ 2.6]{Jorgensen-tropical-friezes-and-the-index-in-higher-homological-algebra}. A similar proof can be used. Instead of \cite[Lem.\ 2.4]{Jorgensen-tropical-friezes-and-the-index-in-higher-homological-algebra} and \cite[Lem.\ 2.5(i)]{Jorgensen-tropical-friezes-and-the-index-in-higher-homological-algebra}, use 
Section~\ref{subsec:approximation}\ref{item:3iii} and Lemma~\ref{lemma10}, and Lemma~\ref{lemma10}, respectively. In addition, note that a morphism in \(\CC\) vanishes under \(\tensor*[]{H}{_{\CX}}\) if and only if it factors through \(\tensor[]{\CX}{^{\perp_{0}}}\) (see Lemma~\ref{lem:Beligiannis-Lem-4-1-consequences}). 

\begin{lem}\label{lemma14}

Given \(\zeta\colon Z\to \sus A\) in \(\CC\) with \(Z\in\add(\CX*\sus\CX)\), there is a commutative diagram 
\[
\begin{tikzcd}[column sep=0.5cm]
	Z \arrow{rr}{\zeta} \arrow{dr}[swap]{\sigma}& &\sus A\\
	& C \arrow{ur}[swap]{\iota}&
\end{tikzcd}
\] 
in \(\CC\) with $C\in\CX*\sus\CX$, such that 
\(\tensor*[]{H}{_{\CX}}(\sigma)\) is epic 
and \(\tensor*[]{H}{_{\CX}}(\iota)\) is monic. 
Moreover, \(\tensor*[]{H}{_{\CX}}(C)\) is isomorphic to \(\Im \tensor*[]{H}{_{\CX}}(\zeta)\) in \(\rmod{\CX}\).
\end{lem}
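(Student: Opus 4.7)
The strategy is to factor \(\tensor*[]{H}{_{\CX}}(\zeta)\) through its image in the abelian category \(\rmod{\CX}\), realise this image as \(\tensor*[]{H}{_{\CX}}(C)\) for some \(C \in \CX * \sus\CX\), and lift the resulting factorisation back to \(\CC\). This mirrors the proof of \cite[Lem.\ 2.6]{Jorgensen-tropical-friezes-and-the-index-in-higher-homological-algebra} with the substitutions indicated in the excerpt.

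To begin, I would factor \(\tensor*[]{H}{_{\CX}}(\zeta) \colon \tensor*[]{H}{_{\CX}}(Z) \to \tensor*[]{H}{_{\CX}}(\sus A)\) in \(\rmod{\CX}\) as \(\tensor*[]{H}{_{\CX}}(Z) \xrightarrow{p} I \xrightarrow{j} \tensor*[]{H}{_{\CX}}(\sus A)\) with \(p\) epic and \(j\) monic. Section~\ref{subsec:approximation}\ref{item:3iii} supplies \(C \in \CX * \sus\CX\) together with an isomorphism \(\tensor*[]{H}{_{\CX}}(C) \cong I\) in \(\rmod{\CX}\), so the desired \(\tensor*[]{H}{_{\CX}}(C) \cong \Im \tensor*[]{H}{_{\CX}}(\zeta)\) is already built in. Applying Lemma~\ref{lemma10} to \(Z \in \add(\CX * \sus\CX)\) then produces \(\sigma \colon Z \to C\) in \(\CC\) with \(\tensor*[]{H}{_{\CX}}(\sigma) = p\) (hence epic), and a second application of Lemma~\ref{lemma10} to \(C \in \CX * \sus\CX \subseteq \add(\CX * \sus\CX)\) yields \(\tensor[]{\iota}{_{0}} \colon C \to \sus A\) with \(\tensor*[]{H}{_{\CX}}(\tensor[]{\iota}{_{0}}) = j\) (hence monic).

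By construction, \(\tensor*[]{H}{_{\CX}}(\tensor[]{\iota}{_{0}}\sigma) = jp = \tensor*[]{H}{_{\CX}}(\zeta)\), so by Lemma~\ref{lem:Beligiannis-Lem-4-1-consequences}\ref{item:Bel-Lem-4-1-i-zero} the discrepancy \(\tensor[]{\iota}{_{0}}\sigma - \zeta\) factors through an object \(Y \in \tensor[]{\CX}{^{\perp_{0}}}\); say \(\tensor[]{\iota}{_{0}}\sigma - \zeta = h\tilde{g}\) with \(\tilde g \colon Z \to Y\) and \(h \colon Y \to \sus A\). The main obstacle, and the technical heart of the argument, is to upgrade this equation modulo \([\tensor[]{\CX}{^{\perp_{0}}}]\) to a strict equality in \(\CC\). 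The plan is to modify \(\tensor[]{\iota}{_{0}}\) by a compensating morphism \(\delta \colon C \to \sus A\) that satisfies \(\delta \sigma = h\tilde{g}\) and itself factors through \(\tensor[]{\CX}{^{\perp_{0}}}\); setting \(\iota \deff \tensor[]{\iota}{_{0}} - \delta\) then preserves \(\tensor*[]{H}{_{\CX}}(\iota) = j\) (still monic) by Lemma~\ref{lem:Beligiannis-Lem-4-1-consequences}\ref{item:Bel-Lem-4-1-i-zero} while enforcing \(\iota \sigma = \zeta\) strictly in \(\CC\). The production of \(\delta\) exploits precisely the slack that Lemma~\ref{lemma10} leaves in the choice of lifts, namely that such lifts are unique only up to morphisms in the ideal \([\tensor[]{\CX}{^{\perp_{0}}}]\). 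This yields the required commutative triangle in \(\CC\) with \(C \in \CX * \sus\CX\), \(\tensor*[]{H}{_{\CX}}(\sigma)\) epic, \(\tensor*[]{H}{_{\CX}}(\iota)\) monic, and \(\tensor*[]{H}{_{\CX}}(C) \cong \Im \tensor*[]{H}{_{\CX}}(\zeta)\).
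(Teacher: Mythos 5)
Your skeleton (epi--mono factorisation of \(\tensor*[]{H}{_{\CX}}(\zeta)\) in \(\rmod{\CX}\), realisation of the image via Section~\ref{subsec:approximation}\ref{item:3iii}, lifting both halves via Lemma~\ref{lemma10}) is the right one, but the step you yourself flag as ``the technical heart'' is precisely where the argument breaks, and your proposed fix does not go through. You need \(\delta\colon C\to\sus A\) with \(\delta\sigma=\iota_0\sigma-\zeta\). The morphism \(\iota_0\sigma-\zeta\) is only known to lie in \([\tensor[]{\CX}{^{\perp_{0}}}](Z,\sus A)\), whereas the morphisms of the form \(\delta\sigma\) with \(\delta\in[\tensor[]{\CX}{^{\perp_{0}}}](C,\sus A)\) form, in general, a proper subgroup of that group. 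Concretely: complete \(\sigma\) to a triangle \(W\xrightarrow{w}Z\xrightarrow{\sigma}C\to\sus W\); a morphism \(Z\to\sus A\) factors through \(\sigma\) if and only if it kills \(w\), and for \(\iota_0\sigma-\zeta\) this amounts to \(\zeta w=0\). All you can deduce is \(\tensor*[]{H}{_{\CX}}(\zeta w)=0\), i.e.\ \(\zeta w\in[\tensor[]{\CX}{^{\perp_{0}}}]\), which is strictly weaker. So ``the slack in the choice of lifts'' is not enough: varying \(\iota_0\) over all lifts of \(j\) only changes \(\iota_0\sigma\) within that proper subgroup, and varying \(\sigma\) instead runs into the symmetric obstruction \(e\zeta\) for \(e\) the cone map of \(\iota_0\).

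The genuine argument avoids arbitrary lifts. Reduce to \(Z\in\CX*\sus\CX\) with presentation triangle \(X_1\xrightarrow{\xi}X_0\xrightarrow{a}Z\xrightarrow{b}\sus X_1\), and build \(C\) from a triangle \(X_1'\xrightarrow{\xi'}X_0\xrightarrow{a'}C\to\sus X_1'\) over the \emph{same} \(X_0\), chosen so that \(\CX(-,X_1')\to\CX(-,X_0)\to\Im\tensor*[]{H}{_{\CX}}(\zeta)\to 0\) is exact. The two vanishings one then needs, \(\zeta a\xi'=0\) and \(a'\xi=0\), concern morphisms with source in \(\CX\), and such a morphism vanishing under \(\tensor*[]{H}{_{\CX}}\) is genuinely zero (it factors through \(\tensor[]{\CX}{^{\perp_{0}}}\) and \(\CC(\CX,\tensor[]{\CX}{^{\perp_{0}}})=0\)). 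This yields \(\iota\) and \(\sigma\) with \(\iota a'=\zeta a\) and \(\sigma a=a'\), whence \((\iota\sigma-\zeta)a=0\) and the discrepancy factors through \(b\), i.e.\ through \(\sus X_1\in\sus\CX\) --- not merely through \(\tensor[]{\CX}{^{\perp_{0}}}\). That is what saves the proof: replacing \(C\) by \(C\oplus\sus X_1\) (still in \(\CX*\sus\CX\), with \(\tensor*[]{H}{_{\CX}}\) unchanged since \(\CX\) is rigid) absorbs the discrepancy and gives \(\iota\sigma=\zeta\) on the nose. Your object \(Y\in\tensor[]{\CX}{^{\perp_{0}}}\) cannot be absorbed in this way, since it need not lie in \(\CX*\sus\CX\).
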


We now present the main result of this subsection: an additivity formula with error term for the index \(\newindx{\CX}\) with respect to the rigid subcategory \(\CX\) (cf.\ \cite[Thm.\ 4.4]{Jorgensen-tropical-friezes-and-the-index-in-higher-homological-algebra}).

\begin{thm}
\label{thm:X-index-additive-on-triangles-with-error-term}

If 
\(\begin{tikzcd}[column sep=0.5cm]
	A \arrow{r}{}& B \arrow{r}{}& C \arrow{r}{\gamma}& \sus A
\end{tikzcd}\) 
is a triangle in \(\CC\), then 
\[
\newindx{\CX}(A) - \newindx{\CX}(B) + \newindx{\CX}(C) 
	=  \tensor[]{\theta}{_{\CX}}\big([\Im \tensor*[]{H}{_{\CX}}(\gamma)]\big).
\]

\end{thm}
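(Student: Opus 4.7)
The plan is to express the defect $\newindx{\CX}(A) - \newindx{\CX}(B) + \newindx{\CX}(C)$ as a sum of relations in $\tensor[]{K}{_{0}}(\CC, \tensor[]{\BE}{_{\CX}}, \tensor[]{\fs}{_{\CX}})$ arising from extriangles in the relative structure $(\CC, \tensor[]{\BE}{_{\CX}}, \tensor[]{\fs}{_{\CX}})$. The index is additive on such extriangles by Proposition~\ref{lemma7}, and $\tensor[]{\theta}{_{\CX}}([\Im \tensor*[]{H}{_{\CX}}(\gamma)])$ equals $\newindx{\CX}(D) + \newindx{\CX}(\tensor*[]{\sus}{^{-1}} D)$ for any $D$ with $\tensor*[]{H}{_{\CX}}(D) \iso \Im \tensor*[]{H}{_{\CX}}(\gamma)$ by Proposition~\ref{prop:definition-of-theta-sub-CX}; such a $D$ will be produced by Lemma~\ref{lemma14}. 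Throughout, to verify that a given triangle is an extriangle in the relative structure it suffices, by Lemma~\ref{lem:Beligiannis-Lem-4-1-consequences}, to show that $\tensor*[]{H}{_{\CX}}$ sends its third morphism to zero.

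First I would reduce to the case $C \in \add(\CX * \sus \CX)$. By Section~\ref{subsec:approximation}\ref{item:3ii} there is a triangle $Y \to Z \xrightarrow{\tensor[]{\zeta}{_{0}}} C \to \sus Y$ with $Y \in \tensor[]{\CX}{^{\perp_{0}}}$ and $Z \in \add(\CX * \sus \CX)$. Since $\tensor[]{\zeta}{_{0}}$ is a right $\CX$-approximation and $\tensor*[]{H}{_{\CX}}(Y) = 0$, Lemma~\ref{lem:Beligiannis-Lem-4-1-consequences} yields that $\tensor*[]{H}{_{\CX}}(\tensor[]{\zeta}{_{0}})$ is an isomorphism and that this triangle is an extriangle in the relative structure. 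Completing $\gamma \tensor[]{\zeta}{_{0}} \colon Z \to \sus A$ to a triangle $A \to \tensor[]{B}{_{Z}} \to Z \xrightarrow{\gamma \tensor[]{\zeta}{_{0}}} \sus A$ and invoking axiom (TR3) produces a morphism of triangles to the original, whose associated $3\times 3$ diagram yields a further triangle $Y \to \tensor[]{B}{_{Z}} \to B \to \sus Y$. The latter also lies in the relative structure, because its third morphism factors through $C \to \sus Y$. Summing the two $K_0$-relations reduces the theorem to its statement for the triangle $A \to \tensor[]{B}{_{Z}} \to Z \xrightarrow{\gamma \tensor[]{\zeta}{_{0}}} \sus A$, and $\Im \tensor*[]{H}{_{\CX}}(\gamma \tensor[]{\zeta}{_{0}}) \iso \Im \tensor*[]{H}{_{\CX}}(\gamma)$ since $\tensor*[]{H}{_{\CX}}(\tensor[]{\zeta}{_{0}})$ is an isomorphism.

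Now assume $C \in \add(\CX * \sus \CX)$. By Lemma~\ref{lemma14}, $\gamma$ factors as $C \xrightarrow{\sigma} D \xrightarrow{\iota} \sus A$ with $D \in \CX * \sus \CX$, $\tensor*[]{H}{_{\CX}}(\sigma)$ epic, $\tensor*[]{H}{_{\CX}}(\iota)$ monic, and $\tensor*[]{H}{_{\CX}}(D) \iso \Im \tensor*[]{H}{_{\CX}}(\gamma)$. Complete $\sigma$ and $\iota$ to triangles $W \to C \xrightarrow{\sigma} D \to \sus W$ and $A \to B'' \to D \xrightarrow{\iota} \sus A$, and use (TR3) to construct a morphism of triangles from $A \to B \to C \xrightarrow{\gamma} \sus A$ to the latter that is the identity on $A$ and $\sus A$ with third component $\sigma$; the resulting $3\times 3$ diagram produces a cone triangle $W \to B \to B'' \to \sus W$. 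Three of these triangles lie in the relative structure: (i) $W \to C \to D \to \sus W$, since $\tensor*[]{H}{_{\CX}}(\sigma)$ is epic; (ii) $W \to B \to B'' \to \sus W$, since commutativity of the $3\times 3$ diagram factors its third morphism through $D \to \sus W$; and (iii) the left rotation $\tensor*[]{\sus}{^{-1}} D \to A \to B'' \to D$ of the triangle for $\iota$, since monicity of $\tensor*[]{H}{_{\CX}}(\iota)$ together with exactness forces $\tensor*[]{H}{_{\CX}}(B'' \to D) = 0$. Adding the resulting identities $\newindx{\CX}(C) = \newindx{\CX}(W) + \newindx{\CX}(D)$, $\newindx{\CX}(B'') = \newindx{\CX}(B) - \newindx{\CX}(W)$, and $\newindx{\CX}(\tensor*[]{\sus}{^{-1}} D) = \newindx{\CX}(A) - \newindx{\CX}(B'')$ gives $\newindx{\CX}(A) - \newindx{\CX}(B) + \newindx{\CX}(C) = \newindx{\CX}(D) + \newindx{\CX}(\tensor*[]{\sus}{^{-1}} D) = \tensor[]{\theta}{_{\CX}}([\Im \tensor*[]{H}{_{\CX}}(\gamma)])$. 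The main technical obstacle is the verification of (ii): this requires carefully tracking the commutativity data of the induced $3\times 3$ diagram to see that the third morphism $B'' \to \sus W$ factors through $D \to \sus W$, which in turn has vanishing $\tensor*[]{H}{_{\CX}}$-image by (i).
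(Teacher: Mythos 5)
Your proposal is correct and follows essentially the same route as the paper: the general case is reduced to \(C\in\add(\CX*\sus\CX)\) via the approximation triangle of Subsection~\ref{subsec:approximation}\ref{item:3ii} and an octahedron, and the special case factors \(\gamma\) through its image using Lemma~\ref{lemma14} and sums three relative-extriangle relations via Proposition~\ref{lemma7} and Proposition~\ref{prop:definition-of-theta-sub-CX}. The only difference is that you spell out the special case which the paper defers to the proof of \cite[Thm.\ 4.4]{Jorgensen-tropical-friezes-and-the-index-in-higher-homological-algebra}, and your verification there (in particular that the connecting morphism \(B''\to\sus W\) equals \(\omega\pi\) with \(\tensor*[]{H}{_{\CX}}(\omega)=0\)) is sound.
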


\begin{proof}

\underline{Special Case}:\; 
\(C\in\add(\CX*\sus\CX)\). 

One can argue as in the proof of the ``Special case'' of \cite[Thm.\ 4.4]{Jorgensen-tropical-friezes-and-the-index-in-higher-homological-algebra}, using 
Lemma~\ref{lemma14}, 
Proposition~\ref{lemma7} and 
Proposition~\ref{prop:definition-of-theta-sub-CX} in place of 
\cite[Lem.\ 2.6]{Jorgensen-tropical-friezes-and-the-index-in-higher-homological-algebra}, 
\cite[Lem.\ 3.7]{Jorgensen-tropical-friezes-and-the-index-in-higher-homological-algebra} and 
\cite[Lem.\ 4.1]{Jorgensen-tropical-friezes-and-the-index-in-higher-homological-algebra}, 
respectively.

\underline{General Case}:\; 
\(C\in\CC\) arbitrary. 

By Section~\ref{subsec:approximation}\ref{item:3ii}, there is a triangle 
\(\begin{tikzcd}[column sep=0.5cm]
	D \arrow{r}& Z\arrow{r}{\zeta} & C\arrow{r}{\delta} & \sus D,
\end{tikzcd}\) 
where \(\zeta\) is a right \(\add(\CX*\sus\CX)\)-approximation of \(C\). 
Applying the octahedral axiom (see \cite[(TR5'')]{HolmJorgensen-tri-cats-intro}) on the composition \(\gamma\zeta\) produces a commutative diagram
\[
\begin{tikzcd}
	0 \arrow{r}\arrow{d}& D \arrow[equals]{r}\arrow{d}& D \arrow{r}\arrow{d}& 0\arrow{d}\\
	A \arrow{r}\arrow[equals]{d}& Y \arrow{r}\arrow{d}{\nu}& Z \arrow{r}{\eta}\arrow{d}{\zeta}& \sus A\arrow[equals]{d}\\
	A \arrow{r}\arrow{d}& B \arrow{r}{\beta}\arrow{d}{\eps}& C\arrow{r}{\gamma}\arrow{d}{\delta}& \sus A\arrow{d}\\
	0 \arrow{r}& \sus D \arrow[equals]{r}& \sus D \arrow{r}& 0 
\end{tikzcd}
\]
in which each row and column is a triangle. 
Since \(\zeta\) is a right \(\add(\CX*\sus\CX)\)-approximation, we have that \(\tensor*[]{H}{_{\CX}}(\zeta)\) is an epimorphism. Hence, \(\tensor*[]{H}{_{\CX}}(\delta)\tensor*[]{H}{_{\CX}}(\zeta) = \tensor*[]{H}{_{\CX}}(\delta\zeta) = 0\) implies \(\tensor*[]{H}{_{\CX}}(\delta)\) is the zero map. In addition, this also gives \(\tensor*[]{H}{_{\CX}}(\eps) = \tensor*[]{H}{_{\CX}}(\delta)\tensor*[]{H}{_{\CX}}(\beta) = 0\). Thus, by Proposition~\ref{lemma7}, we have 
\begin{align}
\newindx{\CX}(D) - \newindx{\CX}(Z) + \newindx{\CX}(C)&=0,\label{eqn:1-from-thm:X-index-additive-on-triangles-with-error-term}\\
\newindx{\CX}(D) - \newindx{\CX}(Y) + \newindx{\CX}(B)&=0.\label{eqn:2-from-thm:X-index-additive-on-triangles-with-error-term}
\end{align}
As \(Z\in\add(\CX*\sus\CX)\), the Special Case gives 
\begin{equation}\label{eqn:3-from-thm:X-index-additive-on-triangles-with-error-term}
\newindx{\CX}(A) - \newindx{\CX}(Y) + \newindx{\CX}(Z) 
	= \tensor[]{\theta}{_{\CX}}\big([\Im \tensor*[]{H}{_{\CX}}(\eta)]\big).
\end{equation}
And the sum 
\(
\eqref{eqn:1-from-thm:X-index-additive-on-triangles-with-error-term} 
	- \eqref{eqn:2-from-thm:X-index-additive-on-triangles-with-error-term}
	+ \eqref{eqn:3-from-thm:X-index-additive-on-triangles-with-error-term} 
\) 
is
\(
\newindx{\CX}(A) - \newindx{\CX}(B) + \newindx{\CX}(C) 
	= \tensor[]{\theta}{_{\CX}}\big([\Im \tensor*[]{H}{_{\CX}}(\eta)]\big)
	\). 
Lastly, we observe that \(\tensor*[]{H}{_{\CX}}(\zeta)\) being epic implies that 
\(\Im \tensor*[]{H}{_{\CX}}(\gamma) \iso \Im \tensor*[]{H}{_{\CX}}(\eta)\) 
as \(\gamma\zeta = \eta\), so 
\(
\newindx{\CX}(A) - \newindx{\CX}(B) + \newindx{\CX}(C) 
	= \tensor[]{\theta}{_{\CX}}\big([\Im \tensor*[]{H}{_{\CX}}(\eta)]\big) 
	= \tensor[]{\theta}{_{\CX}}\big([\Im \tensor*[]{H}{_{\CX}}(\gamma)]\big)
\).
\end{proof}

%%%%%%%%%%%%%%%%%%%%%%%%%%%%%%%%%%%%%%%%%%%%%%%%
%%%%%%%%%%%%%%%%%%%%%%%%%%%%%%%%%%%%%%%%%%%%%%%%

\section{Understanding \texorpdfstring{\(\tensor[]{K}{_{0}}(\CC,\tensor[]{\BE}{_{\CX}},\tensor[]{\fs}{_{\CX}})\)}{K0CXEXsX}}
\label{sec:descriptions-of-K0CXEXsX}

In this section, we study the 
abelian group 
\(\tensor[]{K}{_{0}}(\CC,\tensor[]{\BE}{_{\CX}},\tensor[]{\fs}{_{\CX}})\). 
We work under the following setup, but specialise further in Subsection~\ref{subsec:relation-to-AR-angles} in which we prove Theorem~\ref{thmx:analogue-of-PPPP-Prop-4-11}.

\begin{setup}
\label{setup:2}
Let \(\field\) be a field. Suppose \(\CC\) is a skeletally small, \(\field\)-linear, \(\Hom\)-finite, idempotent complete, triangulated category. 
Suppose \(\CX\sse\CT\) are additive (see Subsection~\ref{subsec:conventions-notation}\ref{item:def-additive-category}), contravariantly finite subcategories, such that \(\CT\) (and hence also \(\CX\)) is rigid. 
Furthermore, we define an additive, rigid subcategory \(\CY\sse\CC\), which is closed under direct summands, by \(\ind\CT = \ind\CX \dot\cup \ind\CY\). 
\end{setup}

Note that this is a special case of Setup~\ref{setup:1}, and the assumptions imply \(\CC\) is Krull-Schmidt.

%%%%%%%%%%%%%%%%%%%%%%%%%%%%%%%%%%%%%%%%%%%%%%%%%
%%%%%%%%%%%%%%%%%%%%%%%%%%%%%%%%%%%%%%%%%%%%%%%%%
\subsection{The canonical surjection \texorpdfstring{\(Q\)}{Q} between relative Grothendieck groups}
\label{subsec:map-between-relative-grothendieck-groups}

In this subsection, we study the kernel of the canonical homomorphism 
\(Q \colon \tensor[]{K}{_{0}}(\CC,\tensor[]{\BE}{_{\CT}},\tensor[]{\fs}{_{\CT}}) \onto \tensor[]{K}{_{0}}(\CC,\tensor[]{\BE}{_{\CX}},\tensor[]{\fs}{_{\CX}})\)
induced by the containment \(\CX \sse \CT\) of rigid subcategories in a triangulated category \(\CC\). 
An abstract description of \(\Ker Q\) is given in \eqref{eqn:kernel-of-Q}, but we give another  in Proposition~\ref{prop:KerQ-general-description} 
using the homomorphism \(\tensor[]{\theta}{_{\CT}}\) from Proposition~\ref{prop:definition-of-theta-sub-CX}.
One can compute \(\Ker Q\) more explicitly in a special case, as we do in Theorem~\ref{thm:KerQ-T-locally-bounded-has-right-almost-split-maps-description}.

Recall that since \(\CC\) is triangulated, it has the structure of an extriangulated category, which we denote by \((\CC,\BE,\fs)\), where \(\BE(C,A)=\CC(C,\sus A)\) for objects \(A,C\in\CC\) (see Example~\ref{example:triangulated-category-is-extriangulated}). 
In addition, the subcategories \(\CX\) and \(\CT\) induce relative extriangulated structures on \(\CC\), namely there are extriangulated categories 
\((\CC,\tensor[]{\BE}{_{\CX}},\tensor[]{\fs}{_{\CX}})\) and \((\CC,\tensor[]{\BE}{_{\CT}},\tensor[]{\fs}{_{\CT}})\) (see Subsection~\ref{subsec:extriangulated-categories}). 
In the notation of Remark~\ref{rem:grothendieck-group-of-relative-structure}, we obtain 
the following diagram of subgroups in \(\tensor*[]{K}{_{0}^{\sp}}(\CC)\) using that \(0\sse\CX\sse\CT\sse\CC\). Note that \(\CI = \tensor[]{\CI}{_{0}}\).
\[
\begin{tikzcd}[row sep=0.4cm]
\tensor*[]{K}{_{0}^{\sp}}(\CC) \arrow[dash]{d}\\
\tensor[]{\CI}{_{0}}
	=  \left.\big\lan\,[A\tensor*[]{]}{_{\CC}^{\sp}}-[B\tensor*[]{]}{_{\CC}^{\sp}}+[C\tensor*[]{]}{_{\CC}^{\sp}} 
			\,\middle|\, [A \to B \to C] = \fs(\delta) \text{ for some } \delta\in\BE(C,A) \,\big\ran\right.
		\arrow[dash]{d}\\
\tensor[]{\CI}{_{\CX}} 
	= \left.\big\lan\, [A\tensor*[]{]}{_{\CC}^{\sp}}-[B\tensor*[]{]}{_{\CC}^{\sp}}+[C\tensor*[]{]}{_{\CC}^{\sp}} \,\middle|\ [A \to B \to C] = \fs(\delta) \text{ for some } \delta\in\tensor[]{\BE}{_{\CX}}(C,A) \,\big\ran\right.
	\arrow[dash]{d}\\
\tensor[]{\CI}{_{\CT}}	
	= \left.\big\lan\, [A\tensor*[]{]}{_{\CC}^{\sp}}-[B\tensor*[]{]}{_{\CC}^{\sp}}+[C\tensor*[]{]}{_{\CC}^{\sp}} \,\middle|\ [A \to B \to C] = \fs(\delta) \text{ for some } \delta\in\tensor[]{\BE}{_{\CT}}(C,A) \,\big\ran\right.
	\arrow[dash]{d}\\
0
\end{tikzcd}
\]
See also \cite[p.\ 16]{JorgensenShah-grothendieck-groups-of-d-exangulated-categories-and-a-modified-CC-map}.

In particular, there is a canonical surjection 
\begin{equation}
\label{eqn:homomorphism-Q}
\begin{tikzcd}[ampersand replacement=\&, column sep=2cm]
\begin{aligned}[t]
\tensor[]{K}{_{0}}(\CC,\tensor[]{\BE}{_{\CT}},\tensor[]{\fs}{_{\CT}}) \\
[C\tensor[]{]}{_{\CT}}
\end{aligned} 
	\arrow[two heads]{r}{Q \deff \tensor*[]{Q}{_{\CX}^{\CT}}}  \arrow[maps to, yshift={-0.75cm}]{r}\&
\begin{aligned}[t]
&\tensor[]{K}{_{0}}(\CC,\tensor[]{\BE}{_{\CX}},\tensor[]{\fs}{_{\CX}}) \\
&[C\tensor[]{]}{_{\CX}},
\end{aligned}
\end{tikzcd}
\end{equation}
where 
\begin{align}
\Ker Q 
	&= \left.\big\lan\, [A\tensor[]{]}{_{\CT}}-[B\tensor[]{]}{_{\CT}}+[C\tensor[]{]}{_{\CT}}  \,\middle|\, [A \to B \to C] = \fs(\delta) \text{ for some } \delta\in\tensor[]{\BE}{_{\CX}}(C,A) \,\big\ran\right.\nonumber\\
	&= \Braket{ [A\tensor[]{]}{_{\CT}}-[B\tensor[]{]}{_{\CT}}+[C\tensor[]{]}{_{\CT}} | 
\begin{aligned}
	&\text{there is a triangle } 
	\begin{tikzcd}[column sep=0.5cm, ampersand replacement=\&]
		A \arrow{r}\& B\arrow{r}\& C\arrow{r}{\delta} \&\sus A
	\end{tikzcd} \\[-7pt]
	&\text{in }\CC \text{ with } \tensor*[]{H}{_{\CX}}(\delta) = 0
\end{aligned}
}.\label{eqn:kernel-of-Q}
\end{align}

\begin{rem}
\label{remark19}

There is a commutative diagram
\[
\begin{tikzcd}
{}
&\tensor*[]{K}{_{0}^{\sp}}(\CC) 
	\arrow{dl}[swap]{\newindx{\CT}}
	\arrow{dr}{\newindx{\CX}}
&\\
\tensor[]{K}{_{0}}(\CC,\tensor[]{\BE}{_{\CT}},\tensor[]{\fs}{_{\CT}})
	\arrow[two heads]{rr}[swap]{Q = \tensor*[]{Q}{_{\CX}^{\CT}}}
&& \tensor[]{K}{_{0}}(\CC,\tensor[]{\BE}{_{\CX}},\tensor[]{\fs}{_{\CX}}),
\end{tikzcd}
\]
because 
\(Q\circ \newindx{\CT}(C) 
	= Q([C\tensor[]{]}{_{\CT}}) 
	= [C\tensor[]{]}{_{\CX}}
	= \newindx{\CX}(C)\).

\end{rem}

\begin{rem}
\label{remark20}

In the following proposition, which gives a description of \(\Ker Q\) in terms of \(\tensor[]{\theta}{_{\CT}}\), note that \(L\in\rmod{\CT}\) is a functor \(\tensor[]{\CT}{^{\op}}\to \Ab\) and that \(\CX\sse\CT\), whence the restriction \(\restr{L}{\CX}\) makes sense. 
Note also that 
\[
\restr{\tensor*[]{H}{_{\CT}}(C)}{\CX} 
	= \restr{\left(\restr{\CC(-,C)}{\CT} \right)}{\CX}
	= \restr{\CC(-,C)}{\CX}
	= \tensor*[]{H}{_{\CX}}(C).
\]

\end{rem}

\begin{prop}
\label{prop:KerQ-general-description}

We have 
\(\Ker Q = \left.\big\lan\, \tensor[]{\theta}{_{\CT}}([L]) \,\middle|\, L\in\rmod{\CT} \text{ \emph{with} } \restr{L}{\CX} = 0 \,\big\ran\right.\). 

\end{prop}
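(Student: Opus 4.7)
The plan is to prove the two inclusions separately, the main tool being Theorem~\ref{thm:X-index-additive-on-triangles-with-error-term} applied with $\CT$ in place of $\CX$, together with the compatibility of $Q$ with the two indices from Remark~\ref{remark19} and the explicit description of $\Ker Q$ in \eqref{eqn:kernel-of-Q}.

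For the inclusion $\supseteq$, I would take $L\in\rmod{\CT}$ with $\restr{L}{\CX} = 0$. By Subsection~\ref{subsec:approximation}\ref{item:3iii} applied to $\CT$, pick $Z\in\CC$ with $L \iso \tensor*[]{H}{_{\CT}}(Z)$. Unfolding $\tensor[]{\theta}{_{\CT}}$ via Proposition~\ref{prop:definition-of-theta-sub-CX}, applying Remark~\ref{remark19} to pass $Q$ through $\newindx{\CT}$, and then folding back into $\tensor[]{\theta}{_{\CX}}$, I compute
\begin{align*}
Q(\tensor[]{\theta}{_{\CT}}([L]))
	&= Q\bigl(\newindx{\CT}(Z) + \newindx{\CT}(\tensor*[]{\sus}{^{-1}}Z)\bigr) \\
	&= \newindx{\CX}(Z) + \newindx{\CX}(\tensor*[]{\sus}{^{-1}}Z)
	 = \tensor[]{\theta}{_{\CX}}\bigl([\tensor*[]{H}{_{\CX}}(Z)]\bigr).
\end{align*}
By Remark~\ref{remark20}, $\tensor*[]{H}{_{\CX}}(Z) = \restr{\tensor*[]{H}{_{\CT}}(Z)}{\CX} \iso \restr{L}{\CX} = 0$, so the right-hand side vanishes, giving $\tensor[]{\theta}{_{\CT}}([L])\in\Ker Q$.

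For $\subseteq$, by \eqref{eqn:kernel-of-Q} I take a generator $[A\tensor[]{]}{_{\CT}} - [B\tensor[]{]}{_{\CT}} + [C\tensor[]{]}{_{\CT}}$ arising from a triangle $A\to B\to C\xrightarrow{\delta}\sus A$ in $\CC$ with $\tensor*[]{H}{_{\CX}}(\delta) = 0$. Applying Theorem~\ref{thm:X-index-additive-on-triangles-with-error-term} with $\CT$ in place of $\CX$ yields
\[
[A\tensor[]{]}{_{\CT}} - [B\tensor[]{]}{_{\CT}} + [C\tensor[]{]}{_{\CT}} = \tensor[]{\theta}{_{\CT}}\bigl([\Im \tensor*[]{H}{_{\CT}}(\delta)]\bigr),
\]
and it remains to verify that $L \deff \Im \tensor*[]{H}{_{\CT}}(\delta)$ is a valid object of $\rmod{\CT}$ with $\restr{L}{\CX} = 0$. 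The former is ensured by abelianness of $\rmod{\CT}$ under the standing hypotheses (Subsection~\ref{subsec:conventions-notation}\ref{item:functor-categories}). For the latter, I evaluate pointwise: for $X\in\CX$, $L(X) = \Im\bigl(\CC(X,\delta)\colon \CC(X,C)\to\CC(X,\sus A)\bigr) = 0$, because $\tensor*[]{H}{_{\CX}}(\delta) = 0$ by hypothesis. This places the generator in the claimed subgroup and completes the inclusion.

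Overall the argument is a bookkeeping exercise built around Theorem~\ref{thm:X-index-additive-on-triangles-with-error-term} and Remark~\ref{remark19}, so I do not anticipate a genuine obstacle; the only mild subtlety is making sure $\Im\tensor*[]{H}{_{\CT}}(\delta)$ is actually an object of $\rmod{\CT}$ (not merely a finitely generated subfunctor of $\tensor*[]{H}{_{\CT}}(\sus A)$), which is guaranteed by the fact that $\rmod{\CT}$ is abelian in our setting.
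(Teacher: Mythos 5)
Your proposal is correct, and the inclusion \((\sse)\) is essentially the paper's own argument: apply Theorem~\ref{thm:X-index-additive-on-triangles-with-error-term} with \(\CT\) in place of \(\CX\) to a generating triangle from \eqref{eqn:kernel-of-Q}, then check that \(\restr{\Im \tensor*[]{H}{_{\CT}}(\delta)}{\CX}=0\) (the paper invokes exactness of the restriction functor \(\rmod{\CT}\to\rmod{\CX}\), you evaluate the image pointwise on objects of \(\CX\) --- these amount to the same thing, since images in the functor category are computed objectwise). For \((\spse)\) you take a genuinely different route. The paper picks a presentation triangle \(\tensor*[]{\sus}{^{-1}}A \to \tensor[]{T}{_{1}} \to \tensor[]{T}{_{0}} \xrightarrow{\tensor*[]{\tau}{_{0}}} A\) with \(L\iso\tensor*[]{H}{_{\CT}}(A)\), observes that \(\tensor*[]{H}{_{\CX}}(\tensor*[]{\tau}{_{0}})=0\), and uses Theorem~\ref{thm:X-index-additive-on-triangles-with-error-term} to identify \(\tensor[]{\theta}{_{\CT}}([L])\) with one of the explicit generators of \(\Ker Q\) listed in \eqref{eqn:kernel-of-Q}. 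You instead verify directly that \(Q(\tensor[]{\theta}{_{\CT}}([L]))=\tensor[]{\theta}{_{\CX}}([\restr{L}{\CX}])=0\), by unfolding Proposition~\ref{prop:definition-of-theta-sub-CX} for \(\CT\), pushing \(Q\) through via Remark~\ref{remark19}, refolding into \(\tensor[]{\theta}{_{\CX}}\), and using Remark~\ref{remark20}. Both arguments are sound; the paper's version has the small advantage of exhibiting \(\tensor[]{\theta}{_{\CT}}([L])\) as a specific generator from \eqref{eqn:kernel-of-Q}, whereas yours is a cleaner functorial computation that, as a by-product, establishes the compatibility \(Q\circ\tensor[]{\theta}{_{\CT}} = \tensor[]{\theta}{_{\CX}}\circ[\restr{(-)}{\CX}]\) on all of \(\tensor[]{K}{_{0}}(\rmod{\CT})\) (since every object of \(\rmod{\CT}\) is of the form \(\tensor*[]{H}{_{\CT}}(Z)\) up to isomorphism), a fact of independent interest.
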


\begin{proof}

\((\spse)\)\;\; 
Let \(L\in\rmod{\CT}\) satisfy \(\restr{L}{\CX} = 0\). 
By Section~\ref{subsec:approximation}\ref{item:3iii}, there is a triangle 
\begin{equation}\label{eqn:triangle-from-prop:KerQ-general-description}
\begin{tikzcd}
	\tensor*[]{\sus}{^{-1}}A \arrow{r}& \tensor[]{T}{_{1}} \arrow{r}& \tensor[]{T}{_{0}} \arrow{r}{\tensor*[]{\tau}{_{0}}}& A
\end{tikzcd}
\end{equation}
in \(\CC\) with \(\tensor[]{T}{_{i}}\in\CT\) such that \(L \iso \tensor*[]{H}{_{\CT}}(A)\). 
Note that \(\tensor*[]{H}{_{\CT}}(\tensor*[]{\tau}{_{0}})\) is epic as \(\tensor*[]{H}{_{\CT}}(\sus \tensor[]{T}{_{1}}) = 0\), so 
\begin{equation}\label{eqn:Image-of-H-tau-is-L}
\Im \tensor*[]{H}{_{\CT}}(\tensor*[]{\tau}{_{0}}) \iso \tensor*[]{H}{_{\CT}}(A) \iso L.
\end{equation}

We have \(\tensor*[]{H}{_{\CX}}(A) = \restr{\tensor*[]{H}{_{\CT}}(A)}{\CX} \iso \restr{L}{\CX} = 0\) by Remark~\ref{remark20} and our assumption on \(L\). 
In particular, \(\tensor*[]{H}{_{\CX}}(\tensor*[]{\tau}{_{0}}) = 0\), so \eqref{eqn:triangle-from-prop:KerQ-general-description} is one of the triangles in \eqref{eqn:kernel-of-Q}. 
Hence,
\begin{align*}
\tensor[]{\theta}{_{\CT}}([L]) 
	&=\tensor[]{\theta}{_{\CT}}\big([\Im \tensor*[]{H}{_{\CT}}(\tensor*[]{\tau}{_{0}})]\big)
		&& \text{ using } \eqref{eqn:Image-of-H-tau-is-L}\\
	&= [\tensor*[]{\sus}{^{-1}}A\tensor[]{]}{_{\CT}} - [\tensor[]{T}{_{1}}\tensor[]{]}{_{\CT}} + [\tensor[]{T}{_{0}}\tensor[]{]}{_{\CT}} 
		&& \text{ by Theorem}~ \ref{thm:X-index-additive-on-triangles-with-error-term}.
\end{align*}
Hence, \(\tensor[]{\theta}{_{\CT}}([L])\in\Ker Q\). 

\((\sse)\)\;\; 
Conversely, consider a generator of \(\Ker Q\), that is, an element 
\([A\tensor[]{]}{_{\CT}} - [B\tensor[]{]}{_{\CT}} + [C\tensor[]{]}{_{\CT}}\) 
of \(\tensor[]{K}{_{0}}(\CC,\tensor[]{\BE}{_{\CT}},\tensor[]{\fs}{_{\CT}})\) for which there is a triangle 
\(\begin{tikzcd}[column sep=0.5cm]
		A \arrow{r}& B\arrow{r}& C\arrow{r}{\gamma} &\sus A
\end{tikzcd}\) 
in \(\CC\) such that \(\tensor*[]{H}{_{\CX}}(\gamma) = 0\); see \eqref{eqn:kernel-of-Q}. 
Consider \(\Im \tensor*[]{H}{_{\CT}}(\gamma)\in\rmod{\CT}\). 
Note \(\tensor[]{\theta}{_{\CT}}\big([\Im \tensor*[]{H}{_{\CT}}(\gamma)]\big) = [A\tensor[]{]}{_{\CT}} - [B\tensor[]{]}{_{\CT}} + [C\tensor[]{]}{_{\CT}}\) by Theorem~\ref{thm:X-index-additive-on-triangles-with-error-term}. 
Thus, it suffices to show that \(\restr{\Im \tensor*[]{H}{_{\CT}}(\gamma)}{\CX}\) is trivial. 
Observe that 
\begin{align*}
\restr{(\Im \tensor*[]{H}{_{\CT}}(\gamma))}{\CX}&=\Im ( \restr{(\tensor*[]{H}{_{\CT}}(\gamma))}{\CX} )&& 
\text{ as } \restr{(-)}{\CX}\colon \rmod{\CT} \to \rmod{\CX} \text{ is an exact functor}\\
			&= \Im \tensor*[]{H}{_{\CX}}(\gamma) && \text{ by Remark}~\ref{remark20}\\
			&=0.&&\qedhere
\end{align*}
\end{proof}

Recall that a \(\field\)-linear category \(\CA\) is called \emph{locally bounded} if \(\CA\) is \(\Hom\)-finite and, for each indecomposable object \(A\in\CA\), there are only finitely many objects 
\(B\in\ind\CA\) such that \(\CA(A,B)\neq 0\) or \(\CA(B,A)\neq 0\); see \cite[p.\ 160]{LenzingReiten-hereditary-noetherian-categories-of-positive-euler-characteristic}. 
The category \(\CA\) \emph{has right almost split morphisms} if each indecomposable object \(A\in\CA\) admits a right almost split morphism \(C \to A\) in the sense of \cite[p.\ 454]{AuslanderReiten-Rep-theory-of-Artin-algebras-IV}; see \cite[p.\ 285]{Auslander-Rep-theory-of-Artin-algebras-II}, where right almost split morphisms were called `almost splittable'.

\begin{thm}
\label{thm:KerQ-T-locally-bounded-has-right-almost-split-maps-description}

If \(\CT\) is locally bounded and has right almost split morphisms, then
\[
\Ker Q = \left.\big\lan\, \tensor[]{\theta}{_{\CT}}([\tensor[]{S}{_{Y}}]) \,\middle|\,  Y\in\ind\CY \,\big\ran\right.,
\]
where \(\tensor[]{S}{_{Y}}\in\rmod{\CT}\) is the simple top of \(\CT(-,Y)\). 

\end{thm}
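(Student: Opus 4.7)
The plan is to combine Proposition~\ref{prop:KerQ-general-description} with a composition-series argument in \(\rmod{\CT}\). The hypotheses on \(\CT\) (locally bounded and with right almost split morphisms) should make \(\rmod{\CT}\) into a length category whose simple objects are exactly \(\{S_T \mid T \in \ind\CT\}\), where \(S_T \deff \CT(-,T)/\rad\CT(-,T)\) for each \(T \in \ind\CT\); granting this, both containments fall out by standard arguments.

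For ``\(\spse\)'', fix \(Y \in \ind\CY\). For every \(X \in \ind\CX\) we have \(X \niso Y\) because \(\ind\CT = \ind\CX \dot\cup \ind\CY\), so \(\rad\CT(X,Y) = \CT(X,Y)\) and hence \(S_Y(X) = 0\). Thus \(\restr{S_Y}{\CX} = 0\), and Proposition~\ref{prop:KerQ-general-description} gives \(\tensor[]{\theta}{_{\CT}}([S_Y]) \in \Ker Q\).

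For ``\(\sse\)'', Proposition~\ref{prop:KerQ-general-description} reduces the problem to showing that \(\tensor[]{\theta}{_{\CT}}([L])\) lies in the subgroup generated by the \(\tensor[]{\theta}{_{\CT}}([S_Y])\) with \(Y \in \ind\CY\) for every \(L \in \rmod{\CT}\) satisfying \(\restr{L}{\CX} = 0\). Since \(\tensor[]{\theta}{_{\CT}}\) is a group homomorphism, I would reduce this further to proving the identity \([L] = \sum_{Y \in \ind\CY} m_Y [S_Y]\) in \(\tensor[]{K}{_{0}}(\rmod{\CT})\) for some integers \(m_Y\), almost all zero. Local boundedness implies that each representable \(\CT(-,T)\) has finite total \(\field\)-dimension---its support inside \(\ind\CT\) is finite and each of its values is finite-dimensional---so \(\CT(-,T)\) has finite length in \(\rmod{\CT}\). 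Any \(L \in \rmod{\CT}\), being a quotient of such a representable, then also has finite length. Jordan--H\"older yields \([L] = \sum_{T \in \ind\CT} m_T [S_T]\) in \(\tensor[]{K}{_{0}}(\rmod{\CT})\). Since evaluation of functors at any object of \(\CT\) is exact, the vanishing \(L(X) = 0\) for every \(X \in \ind\CX\) forces \(m_X \cdot \dim_\field S_X(X) = 0\), and hence \(m_X = 0\) because \(S_X(X) \neq 0\). Only summands with \(Y \in \ind\CY\) then survive, as required.

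The main technical step will be verifying that \(\rmod{\CT}\) is a length category whose simples are indexed by \(\ind\CT\). The finite-length property comes from local boundedness, whereas the identification of the simples---in particular the fact that each \(S_T\) is itself finitely presented---uses the right almost split morphisms, which ensure that \(\rad\CT(-,T)\) is a finitely generated subfunctor of \(\CT(-,T)\). I expect these assertions to be available in the Auslander--Reiten literature for locally bounded Krull--Schmidt additive categories, rather than needing proof from scratch.
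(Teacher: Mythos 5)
Your proposal is correct and follows essentially the same route as the paper: both reduce to Proposition~\ref{prop:KerQ-general-description}, use local boundedness and \(\Hom\)-finiteness to get finite length in \(\rmod{\CT}\), invoke the existence of right almost split morphisms (via Auslander) to place the simple tops \(\tensor[]{S}{_{T}}\) in \(\rmod{\CT}\), and observe that \(\restr{L}{\CX}=0\) exactly when all composition factors of \(L\) are \(\tensor[]{S}{_{Y}}\) with \(Y\in\ind\CY\). Your evaluation-at-\(X\) multiplicity count is just a slightly more explicit version of that last step.
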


\begin{proof}

Local boundedness and \(\Hom\)-finiteness imply each \(L\in\rmod{\CX}\) has finite length in \(\rMod{\CX}\) (the category of all additive functors \(\tensor[]{\CX}{^{\op}} \to \Ab\)), hence a finite composition series. 
The simple quotients in the series are simple tops \(\tensor[]{S}{_{T}}\) of objects \(\CT(-,T)\) for \(T\in\ind \CT\); see \cite[Prop.\ 2.3(b)]{Auslander-Rep-theory-of-Artin-algebras-II}. 
Since \(\CT\) has right almost split morphisms, the \(\tensor[]{S}{_{T}}\) are in \(\rmod{\CT}\) by \cite[Cor.\ 2.6]{Auslander-Rep-theory-of-Artin-algebras-II}. 
Finally, note that \(\restr{L}{\CX}=0\) if and only if each composition factor \(\tensor[]{S}{_{T}}\) satisfies \(\restr{\tensor[]{S}{_{T}}}{\CX} = 0\), that is, if and only if \(T\in\ind\CY\).
Hence, 
\begin{align*}
\left.\big\lan\, \tensor[]{\theta}{_{\CT}}([\tensor[]{S}{_{Y}}]) \,\middle|\,  Y\in\ind\CY \,\big\ran\right.
	&= \left.\big\lan\, \tensor[]{\theta}{_{\CT}}([L]) \,\middle|\,  L\in\rmod{\CT} \text{ with } \restr{L}{\CX} = 0 \,\big\ran\right.&&\\
	&= \Ker Q,&& 
\end{align*}
where the last equality follows from \cref{prop:KerQ-general-description}. 
\end{proof}

%%%%%%%%%%%%%%%%%%%%%%%%%%%%%%%%%%%%%%%%%%%%%%%%
%%%%%%%%%%%%%%%%%%%%%%%%%%%%%%%%%%%%%%%%%%%%%%%%

\subsection{Connection to Auslander-Reiten angles and modified Caldero-Chapoton maps}
\label{subsec:relation-to-AR-angles}

We specialise further in this subsection in order to produce in Theorem~\ref{thm:Ker-Q-in-terms-of-AR-n-plus-2-angles} a presentation of \(\Ker Q\) in terms of Auslander-Reiten angles. In particular, we assume \(\CT\) is a locally bounded, \(n\)-cluster tilting subcategory; see the setup below.

\begin{setup}
\label{setup:4}

Let \(\field\) be a field and let \(n\geq 2\) be an integer. Suppose \(\CC\) is a skeletally small, \(\field\)-linear, \(\Hom\)-finite, idempotent complete, triangulated category with a Serre functor \(\BS\). 
Suppose \(\CX\sse\CT\) are additive (see Subsection~\ref{subsec:conventions-notation}\ref{item:def-additive-category}), contravariantly finite subcategories, 
In addition, suppose \(\CT\) is a locally bounded, \(n\)-cluster tilting subcategory of \(\CC\) (see Definition~\ref{def:n-cluster-tilting} below). 
We still define an additive rigid subcategory \(\CY\sse\CC\), which is closed under direct summands, by \(\ind\CT = \ind\CX \dot\cup \ind\CY\). 

\end{setup}

This is a special case of Setup~\ref{setup:2}. 
Let us recall the definition of \(n\)-cluster tilting.

\begin{defn}
\label{def:n-cluster-tilting}
\cite{Iyama-maximal-orthogonal-subcategories-of-triangulated-categories-satisfying-serre-duality}, 
\cite[Sec.\ 3]{IyamaYoshino-mutation-in-tri-cats-rigid-CM-mods} 
Suppose \(\CC\) is an idempotent complete, triangulated category with an additive subcategory \(\CT\). 
For an integer \(n\geq 2\), we call \(\CT\) an \emph{\(n\)-cluster tilting subcategory of \(\CC\)} if: 
\begin{enumerate}[label = (\roman*)]
	\item \(\CT\) is functorially finite; and 	
	\item 
		\(\begin{aligned}[t]
			\CT 	&= \Set{X\in \CC | \tensor*[]{\Ext}{_{\CC}^{i}}(\CT,X) = 0 \text{ for all } 1 \leq i \leq n-1} \\
		 			&= \Set{X\in \CC | \tensor*[]{\Ext}{_{\CC}^{i}}(X,\CT) = 0 \text{ for all } 1 \leq i \leq n-1}.
		\end{aligned}\)
\end{enumerate}
\end{defn}

Our first aim in this subsection is to prove Theorem~\ref{thmx:analogue-of-PPPP-Prop-4-11} from Section~\ref{sec:introduction}. For this, we recall the following preliminary definition and then 
the triangulated index of \cite{Jorgensen-tropical-friezes-and-the-index-in-higher-homological-algebra}.

\begin{defn}
\label{def:tower-of-triangles}

\cite[Def.\ 3.1]{Jorgensen-tropical-friezes-and-the-index-in-higher-homological-algebra} 
By a \emph{tower of triangles in \(\CC\)}, we mean a diagram 
\[
\begin{tikzcd}[column sep=0.4cm]
{}
&\tensor*[]{A}{_{m-1}}
	\arrow{dr}{}
	\arrow{rr}
&&\tensor*[]{A}{_{m-2}}
	\arrow[end anchor={[xshift=-8pt, yshift=3pt]}]{dr}{}
	\arrow{rr}
&&\cdots
	\arrow{rr}
&&\tensor*[]{A}{_{2}}
	\arrow{dr}
	\arrow{rr}
&&\tensor*[]{A}{_{1}}
	\arrow{dr}{}
& \\
\tensor*[]{A}{_{m}}
	\arrow{ur}{}
&&\tensor*[]{V}{_{m-1.5}}
	\arrow{ur}{}
	\arrow[rightsquigarrow]{ll}{}
&&\tensor*[]{V}{_{m-2.5}}
	\arrow[rightsquigarrow]{ll}{}
&\cdots
&\tensor*[]{V}{_{2.5}}
	\arrow{ur}{}
&&\tensor*[]{V}{_{1.5}}
	\arrow{ur}{}
	\arrow[rightsquigarrow]{ll}{}
&&\tensor*[]{A}{_{0}}, 
	\arrow[rightsquigarrow]{ll}{}
\end{tikzcd}
\]
in \(\CC\), where  
\(m\geq 2\) is an integer, 
an arrow 
	\(\begin{tikzcd}[column sep=0.6cm]
	B \arrow[rightsquigarrow]{r}& C
	\end{tikzcd}\)
represents a morphism
	\(\begin{tikzcd}[column sep=0.6cm]
	B \arrow{r}& \sus C,
	\end{tikzcd}\) 
each oriented triangle is a triangle in \(\CC\), 
and each non-oriented triangle commutes. 

\end{defn}

\begin{defn}
\label{def:triangulated-index-wrt-n-cluster-tilting-subcategory}

\cite[Lem.\ 3.2, Def.\ 3.3]{Jorgensen-tropical-friezes-and-the-index-in-higher-homological-algebra} 
Let \(C\in\CC\) be arbitrary. There is a tower of triangles 
\begin{equation}\label{eqn:tower-of-triangles-of-T-covers-for-C}
\begin{tikzcd}[column sep=0.5cm]
	&\tensor[]{T}{_{n-2}}\arrow{dr}{\tensor*[]{\tau}{_{n-2}}}\arrow{rr}&&\tensor[]{T}{_{n-3}}\arrow[end anchor={[xshift=-8pt, yshift=3pt]}]{dr}{\tensor*[]{\tau}{_{n-3}}}\arrow{r}&\cdots\arrow{r}&\tensor[]{T}{_{1}}\arrow{dr}{\tensor*[]{\tau}{_{1}}}\arrow{rr}&&\tensor[]{T}{_{0}}\arrow{dr}{\tensor*[]{\tau}{_{0}}}& \\
	\tensor[]{T}{_{n-1}}\arrow{ur}{}&&V_{n-2.5}\arrow{ur}{}\arrow[rightsquigarrow]{ll}{\tensor*[]{\nu}{_{n-2.5}}}&&\hspace{0.5cm}\cdots\hspace{0.5cm}\arrow[start anchor={[xshift=8pt, yshift=3pt]}]{ur}{}\arrow[rightsquigarrow]{ll}{\tensor*[]{\nu}{_{n-3.5}}}&&V_{0.5}\arrow{ur}{}\arrow[rightsquigarrow]{ll}{\tensor*[]{\nu}{_{0.5}}}&&C, \arrow[rightsquigarrow]{ll}{\tensor*[]{\nu}{_{-0.5}}}
\end{tikzcd}
\end{equation}
where each \(\tensor[]{\tau}{_{i}}\) is a minimal right \(\CT\)-approximation. 
The \emph{triangulated index of \(C\) with respect to \(\CT\)} is 
\[
\indxx{\CT}(C) \deff \sum_{i=0}^{n-1}(-1\tensor[]{)}{^{i}}[\tensor[]{T}{_{i}}\tensor*[]{]}{_{\CT}^{\sp}},
\]
viewed as an element of the split Grothendieck group \(\tensor*[]{K}{_{0}^{\sp}}(\CT)\). 
This assignment induces a surjective homomorphism 
\(\indxx{\CT} \colon \tensor*[]{K}{_{0}^{\sp}}(\CC) \onto \tensor*[]{K}{_{0}^{\sp}}(\CT)\); see \cite[Rem.\ 2.14]{Fedele-grothendieck-groups-of-triangulated-categories-via-cluster-tilting-subcategories}.
\end{defn}

We can now prove Theorem~\ref{thmx:analogue-of-PPPP-Prop-4-11} from Section~\ref{sec:introduction}, which can be viewed as a higher cluster tilting analogue of \cite[Prop.\ 4.11]{PadrolPaluPilaudPlamondon-associahedra-for-finite-type-cluster-algebras-and-minimal-relations-between-g-vectors}. Note that this result requires neither the existence of a Serre functor nor that \(\CT\) is locally bounded.

\begin{thm}
\label{thm:analogue-of-PPPP-Prop-4-11}
There is an isomorphism
\[
\begin{tikzcd}[ampersand replacement=\&]
	\begin{aligned}[t]
		\tensor[]{K}{_{0}}(\CC,\tensor[]{\BE}{_{\CT}},\tensor[]{\fs}{_{\CT}}) &\\
		\newindx{\CT}(C) = [C\tensor[]{]}{_{\CT}}&
	\end{aligned}
	\arrow{r}{\tensor[]{\Phi}{_{\CT}}}	\arrow[maps to, yshift={-0.75cm}]{r}\&
	\begin{aligned}[t]
		&\tensor*[]{K}{_{0}^{\sp}}(\CT) \\
		&\indxx{\CT}(C) 
	\end{aligned}
\end{tikzcd}
\]
of abelian groups, making the following diagram commute. 
\[
\begin{tikzcd}[row sep=1cm, column sep=2cm]
\tensor*[]{K}{_{0}^{\sp}}(\CC) 
	\arrow{d}[swap]{\newindx{\CT}(-) = [-\tensor[]{]}{_{\CT}}} 
	\arrow{dr}{\indxx{\CT}(-)}& \\
\tensor[]{K}{_{0}}(\CC,\tensor[]{\BE}{_{\CT}},\tensor[]{\fs}{_{\CT}}) \arrow{r}{\iso}[swap]{\tensor[]{\Phi}{_{\CT}}}  & \tensor*[]{K}{_{0}^{\sp}}(\CT)
\end{tikzcd}
\]

\end{thm}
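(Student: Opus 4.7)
The plan is to exhibit an explicit inverse to \(\tensor[]{\Phi}{_{\CT}}\). First, I would check that \([C\tensor[]{]}{_{\CT}} \mapsto \indxx{\CT}(C)\) actually descends from \(\tensor*[]{K}{_{0}^{\sp}}(\CC)\) to the quotient \(\tensor*[]{K}{_{0}^{\sp}}(\CC)/\tensor[]{\CI}{_{\CT}}\). Since \(\indxx{\CT}\) already defines a homomorphism \(\tensor*[]{K}{_{0}^{\sp}}(\CC) \to \tensor*[]{K}{_{0}^{\sp}}(\CT)\) by \cite[Rem.\ 2.14]{Fedele-grothendieck-groups-of-triangulated-categories-via-cluster-tilting-subcategories}, only the vanishing on the generators of \(\tensor[]{\CI}{_{\CT}}\) needs checking. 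By Remark~\ref{rem:grothendieck-group-of-relative-structure}, these come from triangles \(A \to B \to C \xrightarrow{\gamma} \sus A\) with \(\tensor*[]{H}{_{\CT}}(\gamma) = 0\), and the higher additivity-with-error-term formula for \(\indxx{\CT}\) (\cite[Thm.\ 4.4]{Jorgensen-tropical-friezes-and-the-index-in-higher-homological-algebra}, the triangulated analogue of Theorem~\ref{thm:X-index-additive-on-triangles-with-error-term}) expresses \(\indxx{\CT}(A) - \indxx{\CT}(B) + \indxx{\CT}(C)\) as an error term supported on \([\Im \tensor*[]{H}{_{\CT}}(\gamma)]\), which vanishes precisely under this hypothesis.

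Next, I would define a candidate inverse \(\Psi \colon \tensor*[]{K}{_{0}^{\sp}}(\CT) \to \tensor[]{K}{_{0}}(\CC,\tensor[]{\BE}{_{\CT}},\tensor[]{\fs}{_{\CT}})\) by \([T\tensor*[]{]}{_{\CT}^{\sp}} \mapsto [T\tensor[]{]}{_{\CT}}\). Well-definedness on the split Grothendieck group is automatic: the split triangle \(T \to T \oplus T' \to T'\) has zero connecting morphism, so it is an extriangle in any relative structure, forcing \([T \oplus T'\tensor[]{]}{_{\CT}} = [T\tensor[]{]}{_{\CT}} + [T'\tensor[]{]}{_{\CT}}\).

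The composition \(\tensor[]{\Phi}{_{\CT}} \circ \Psi\) is the identity on \(\tensor*[]{K}{_{0}^{\sp}}(\CT)\): for \(T \in \CT\), the minimal right \(\CT\)-approximation of \(T\) is \(\iden{T}\), so the tower \eqref{eqn:tower-of-triangles-of-T-covers-for-C} may be chosen with \(\tensor[]{T}{_{0}} = T\) and \(\tensor[]{T}{_{i}} = 0\) for \(i \geq 1\), giving \(\indxx{\CT}(T) = [T\tensor*[]{]}{_{\CT}^{\sp}}\). The other composition requires a telescoping argument along \eqref{eqn:tower-of-triangles-of-T-covers-for-C} for arbitrary \(C\in\CC\): each oriented triangle \(\tensor[]{V}{_{i+0.5}} \to \tensor[]{T}{_{i}} \xrightarrow{\tensor*[]{\tau}{_{i}}} \tensor[]{V}{_{i-0.5}} \to \sus \tensor[]{V}{_{i+0.5}}\) (with conventions \(\tensor[]{V}{_{-0.5}} = C\) and \(\tensor[]{V}{_{n-1.5}} = \tensor[]{T}{_{n-1}}\)) is an extriangle in \((\CC,\tensor[]{\BE}{_{\CT}},\tensor[]{\fs}{_{\CT}})\), because \(\tensor*[]{\tau}{_{i}}\) being a right \(\CT\)-approximation makes \(\tensor*[]{H}{_{\CT}}(\tensor*[]{\tau}{_{i}})\) epic, and hence the connecting map has zero image under \(\tensor*[]{H}{_{\CT}}\) by the long exact sequence. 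Telescoping the resulting \(n-1\) relations \([\tensor[]{V}{_{i-0.5}}\tensor[]{]}{_{\CT}} = [\tensor[]{T}{_{i}}\tensor[]{]}{_{\CT}} - [\tensor[]{V}{_{i+0.5}}\tensor[]{]}{_{\CT}}\) yields \([C\tensor[]{]}{_{\CT}} = \sum_{i=0}^{n-1}(-1\tensor[]{)}{^{i}}[\tensor[]{T}{_{i}}\tensor[]{]}{_{\CT}} = \Psi(\indxx{\CT}(C))\) in \(\tensor[]{K}{_{0}}(\CC,\tensor[]{\BE}{_{\CT}},\tensor[]{\fs}{_{\CT}})\). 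I expect the main technical input to be the first step — importing or re-deriving the higher additivity-with-error-term formula for \(\indxx{\CT}\); the well-definedness of \(\Psi\), the identity check on \(\CT\), and the telescoping are essentially formal consequences of the tower construction together with the definition of \(\tensor[]{\CI}{_{\CT}}\).
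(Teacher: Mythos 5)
Your proposal is correct and follows essentially the same route as the paper: well-definedness of \(\tensor[]{\Phi}{_{\CT}}\) via additivity of \(\indxx{\CT}\) on triangles with \(\tensor*[]{H}{_{\CT}}(\gamma)=0\) (the paper cites \cite[Lem.\ 3.7]{Jorgensen-tropical-friezes-and-the-index-in-higher-homological-algebra} directly rather than the error-term formula, but this is the same content), the inverse \(\Psi\) given by \([T\tensor*[]{]}{_{\CT}^{\sp}}\mapsto[T\tensor[]{]}{_{\CT}}\), and the telescoping of the \(n-1\) relations coming from the tower \eqref{eqn:tower-of-triangles-of-T-covers-for-C}. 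You in fact supply a couple of details the paper leaves implicit, such as the justification that \(\Phi\circ\Psi\) is the identity.
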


\begin{proof}
The assignment
\(\tensor[]{\Phi}{_{\CT}}\) gives a well-defined homomorphism, because it sends relations in 
\(
\tensor[]{K}{_{0}}(\CC,\tensor[]{\BE}{_{\CT}},\tensor[]{\fs}{_{\CT}}) = \tensor*[]{K}{_{0}^{\sp}}(\CC)/\tensor[]{\CI}{_{\CT}}
\)
to 0. 
Namely, a relation arises from a triangle 
\(\begin{tikzcd}[column sep=0.5cm]
	\hspace{-5pt}A \arrow{r}& B \arrow{r}& C \arrow{r}{\gamma}& \sus A
\end{tikzcd}\)
with \(\tensor*[]{H}{_{\CT}}(\gamma) = 0\), which satisfies 
\(
\hspace{3pt}
\indxx{\CT}(A) - \indxx{\CT}(B) + \indxx{\CT}(C) = 0
\) 
by \cite[Lem.\ 3.7]{Jorgensen-tropical-friezes-and-the-index-in-higher-homological-algebra}.

We claim that 
\[
\begin{tikzcd}[ampersand replacement=\&]
	\begin{aligned}[t]
		\tensor*[]{K}{_{0}^{\sp}}(\CT)& 
		\\
		[T\tensor*[]{]}{_{\CT}^{\sp}}& 
	\end{aligned}
	\arrow{r}{\tensor[]{\Psi}{_{\CT}}}	
	\arrow[maps to, yshift={-0.75cm}]{r}
	\&
	\begin{aligned}[t]
		&\tensor[]{K}{_{0}}(\CC,\tensor[]{\BE}{_{\CT}},\tensor[]{\fs}{_{\CT}})
		 \\
		&[T\tensor[]{]}{_{\CT}}.
	\end{aligned}
\end{tikzcd}
\]
defines an inverse homomorphism to \(\tensor[]{\Phi}{_{\CT}}\). 
This formula clearly defines a homomorphism of groups.

To simplify notation, put 
\(\Phi\deff \tensor[]{\Phi}{_{\CT}}\)
and
\(\Psi\deff \tensor[]{\Psi}{_{\CT}}\). 
First, note that \(\Phi \circ \Psi = \iden{\tensor*[]{K}{_{0}^{\sp}}(\CT)}\). 
Conversely, let \(C\in\CC\) be arbitrary. Then there is a tower of triangles \eqref{eqn:tower-of-triangles-of-T-covers-for-C},
and we have
\begin{align*}
\Psi\Phi([C\tensor[]{]}{_{\CT}}) 
			&= \Psi(\indxx{\CT}(C))&& \text{ by definition of } \Phi\\
			&= \Psi\left( \sum_{i=0}^{n-1}(-1\tensor[]{)}{^{i}}[\tensor[]{T}{_{i}}\tensor*[]{]}{_{\CT}^{\sp}} \right) && \text{ by definition of } \indxx{\CT}(C)\\
			&=  \sum_{i=0}^{n-1}(-1\tensor[]{)}{^{i}}[\tensor[]{T}{_{i}}\tensor[]{]}{_{\CT}} && \text{ by definition of } \Psi.\\
\end{align*}
Thus, it suffices to show 
\([C\tensor[]{]}{_{\CT}} = \sum_{i=0}^{n-1}(-1\tensor[]{)}{^{i}}[\tensor[]{T}{_{i}}\tensor[]{]}{_{\CT}}\). 
Note that, for \(0 \leq i \leq n-2\), each \(\tensor[]{\tau}{_{i}}\) in \eqref{eqn:tower-of-triangles-of-T-covers-for-C} is a minimal right \(\CT\)-approximation,  so \(\tensor*[]{H}{_{\CT}}(\tensor[]{\tau}{_{i}})\) is surjective. 
Hence, \(\tensor*[]{H}{_{\CT}}(\tensor[]{\nu}{_{i-0.5}}) = 0\) for each \(0 \leq i \leq n-2\). 
Set 
\(\tensor[]{V}{_{-0.5}}\deff C\) 
and 
\(\tensor[]{V}{_{n-1.5}}\deff \tensor[]{T}{_{n-1}}\). 
Then applying Theorem~\ref{thm:X-index-additive-on-triangles-with-error-term} yields 
\(
[\tensor[]{V}{_{i+0.5}}\tensor[]{]}{_{\CT}} - [\tensor[]{T}{_{i}}\tensor[]{]}{_{\CT}} + [\tensor[]{V}{_{i-0.5}}\tensor[]{]}{_{\CT}} = 0 
\)
for each \(0 \leq i \leq n-2\). 
Taking their alternating sum yields 
\([C\tensor[]{]}{_{\CT}} - \sum_{i=0}^{n-1}(-1\tensor[]{)}{^{i}}[\tensor[]{T}{_{i}}\tensor[]{]}{_{\CT}} = 0\), 
and hence we see that 
\(\Psi\Phi([C\tensor[]{]}{_{\CT}}) = [C\tensor[]{]}{_{\CT}}\), i.e.\ \(\Psi\Phi\) is the identity on \(\tensor[]{K}{_{0}}(\CC,\tensor[]{\BE}{_{\CT}},\tensor[]{\fs}{_{\CT}})\).
\end{proof}

Our last result of this subsection gives an interpretation of \(\Ker Q\) in terms of Auslander-Reiten \((n+2)\)-angles in \(\CT\). 
Let us recall what this means. 
An \emph{Auslander-Reiten \((n+2)\)-angle} 
\(
\begin{tikzcd}[column sep=0.5cm]
	\tensor[]{T}{_{n+1}} \arrow{r} & \tensor[]{T}{_{n}} \arrow{r} & \cdots \arrow{r} & \tensor[]{T}{_{1}} \arrow{r} & \tensor[]{T}{_{0}}
\end{tikzcd}
\)
in \(\CT\) amounts to a tower of triangles
\begin{equation}
\label{eqn:tower-of-triangles-for-AR-angle}
\begin{tikzcd}[column sep=0.7cm]
	&\tensor[]{T}{_{n}}\arrow{dr}{\tensor[]{\beta}{_{n}}}\arrow{rr}
	&&\tensor[]{T}{_{n-1}}\arrow[end anchor={[xshift=-8pt, yshift=3pt]}]{dr}{\tensor[]{\beta}{_{n-1}}}\arrow{r}
	&\cdots\arrow{r}
	&\tensor[]{T}{_{2}}\arrow{dr}{\tensor[]{\beta}{_{2}}}\arrow{rr}
	&&\tensor[]{T}{_{1}}\arrow{dr}{\tensor[]{\beta}{_{1}}}
	& \\
	\tensor[]{T}{_{n+1}}\arrow{ur}{\tensor[]{\beta}{_{n+1}}}
	&&\tensor*[]{X}{_{n-0.5}}\arrow{ur}{\tensor[]{\alpha}{_{n-0.5}}}\arrow[rightsquigarrow]{ll}{\tensor[]{\gamma}{_{n-0.5}}}
	&&\hspace{0.5cm}\cdots\hspace{0.5cm}\arrow[start anchor={[xshift=8pt, yshift=3pt]}]{ur}{\tensor[]{\alpha}{_{2.5}}}\arrow[rightsquigarrow]{ll}{\tensor[]{\gamma}{_{n-1.5}}}
	&&\tensor*[]{X}{_{1.5}}\arrow{ur}{\tensor[]{\alpha}{_{1.5}}}\arrow[rightsquigarrow]{ll}{\tensor[]{\gamma}{_{1.5}}}
	&&\tensor[]{T}{_{0}} \arrow[rightsquigarrow]{ll}{\tensor[]{\gamma}{_{0.5}}}
\end{tikzcd}
\end{equation}
in \(\CC\), where:
\begin{itemize}

	\item \(\tensor[]{\beta}{_{1}}\) is minimal right almost split in \(\CT\);
	
	\item \(\tensor[]{\beta}{_{n+1}}\) is minimal left almost split in \(\CT\);
	
	\item \(\tensor[]{\beta}{_{2}},\ldots,\tensor[]{\beta}{_{n}}\) are minimal right \(\CT\)-approximations; and 
	
	\item \(\tensor[]{\alpha}{_{1.5}},\ldots,\tensor[]{\alpha}{_{n-0.5}}\) are minimal left \(\CT\)-approximations;

\end{itemize}
see \cite[Def.\ 3.8]{IyamaYoshino-mutation-in-tri-cats-rigid-CM-mods}. 
By \cite[Thm.\ 3.10]{IyamaYoshino-mutation-in-tri-cats-rigid-CM-mods}, each \(\tensor[]{T}{_{0}}\in\ind\CT\) permits such an Auslander-Reiten \((n+2)\)-angle in \(\CT\) with \(\tensor[]{T}{_{n+1}} =  \BS \tensor*[]{\sus}{^{-n}}\tensor[]{T}{_{0}}\).

\begin{thm}
\label{thm:Ker-Q-in-terms-of-AR-n-plus-2-angles}

The canonical surjection \(Q = \tensor*[]{Q}{_{\CX}^{\CT}}\colon \tensor[]{K}{_{0}}(\CC,\tensor[]{\BE}{_{\CT}},\tensor[]{\fs}{_{\CT}})\to \tensor[]{K}{_{0}}(\CC,\tensor[]{\BE}{_{\CX}},\tensor[]{\fs}{_{\CX}})\) from \eqref{eqn:homomorphism-Q} satisfies
\[
\Ker Q 
	= \Braket{ \sum_{i=0}^{n+1}(-1\tensor[]{)}{^{i}}[\tensor[]{T}{_{i}}\tensor[]{]}{_{\CT}} | 
		\begin{aligned}
		&\tensor[]{T}{_{0}} = Y\in\ind\CY \text{ \emph{and} } 
			\begin{tikzcd}[column sep=0.5cm,ampersand replacement=\&]
			\tensor[]{T}{_{n+1}} \arrow{r}\& \tensor[]{T}{_{n}} \arrow{r}\& \cdots \arrow{r}\& \tensor[]{T}{_{1}} \arrow{r}\& \tensor[]{T}{_{0}}
			\end{tikzcd}\\[-7pt]
		&\text{\emph{is an Auslander-Reiten} } (n+2)\text{\emph{-angle in} }\CT
		\end{aligned}
		}.
\]

\end{thm}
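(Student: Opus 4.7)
The plan is to chain two reductions. First, Theorem~\ref{thm:KerQ-T-locally-bounded-has-right-almost-split-maps-description} applies in the current setup because the Serre functor together with \cite[Thm.~3.10]{IyamaYoshino-mutation-in-tri-cats-rigid-CM-mods} gives every indecomposable of \(\CT\) an Auslander-Reiten \((n+2)\)-angle, and in particular \(\CT\) has right almost split morphisms. This reduces the theorem to showing that for each \(Y\in\ind\CY\),
\[
\tensor[]{\theta}{_{\CT}}([\tensor[]{S}{_{Y}}]) \;=\; \sum_{i=0}^{n+1}(-1)^{i}[\tensor[]{T}{_{i}}\tensor[]{]}{_{\CT}},
\]
where \(\tensor[]{T}{_{n+1}}\to\tensor[]{T}{_{n}}\to\cdots\to \tensor[]{T}{_{1}}\to \tensor[]{T}{_{0}}=Y\) is the AR \((n+2)\)-angle of \(Y\) in \(\CT\).

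Second, I would unravel the tower of triangles~\eqref{eqn:tower-of-triangles-for-AR-angle} and apply Theorem~\ref{thm:X-index-additive-on-triangles-with-error-term} to each of its \(n\) constituent triangles. The crucial computation is the identification of \(\Im \tensor*[]{H}{_{\CT}}(\tensor[]{\gamma}{_{i-0.5}})\) in each case. For \(2\leq i\leq n\), the morphism \(\tensor[]{\beta}{_{i}}\) is a minimal right \(\CT\)-approximation, so \(\tensor*[]{H}{_{\CT}}(\tensor[]{\beta}{_{i}})\) is epic in \(\rmod{\CT}\) and exactness of \(\tensor*[]{H}{_{\CT}}\) forces \(\Im \tensor*[]{H}{_{\CT}}(\tensor[]{\gamma}{_{i-0.5}})=0\). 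For \(i=1\), the morphism \(\tensor[]{\beta}{_{1}}\) is minimal right almost split in \(\CT\), so the standard characterisation of right almost split morphisms gives \(\Cok \tensor*[]{H}{_{\CT}}(\tensor[]{\beta}{_{1}})\iso \tensor[]{S}{_{Y}}\), and hence \(\Im \tensor*[]{H}{_{\CT}}(\tensor[]{\gamma}{_{0.5}})\iso \tensor[]{S}{_{Y}}\) by exactness.

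With the convention \(\tensor[]{X}{_{n+0.5}} \deff \tensor[]{T}{_{n+1}}\), the \(n\) instances of Theorem~\ref{thm:X-index-additive-on-triangles-with-error-term} are the single relation \([\tensor[]{X}{_{1.5}}\tensor[]{]}{_{\CT}}-[\tensor[]{T}{_{1}}\tensor[]{]}{_{\CT}}+[\tensor[]{T}{_{0}}\tensor[]{]}{_{\CT}}=\tensor[]{\theta}{_{\CT}}([\tensor[]{S}{_{Y}}])\) from the bottom triangle, and the zero relations \([\tensor[]{X}{_{i+0.5}}\tensor[]{]}{_{\CT}}-[\tensor[]{T}{_{i}}\tensor[]{]}{_{\CT}}+[\tensor[]{X}{_{i-0.5}}\tensor[]{]}{_{\CT}}=0\) for \(2\leq i\leq n\). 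Forming the alternating sum \(\sum_{i=1}^{n}(-1)^{i+1}(\text{relation } i)\) telescopes the intermediate classes \([\tensor[]{X}{_{i\pm 0.5}}\tensor[]{]}{_{\CT}}\) away, and only the bottom relation contributes on the right-hand side, yielding the target identity. This single identity simultaneously establishes both inclusions of the claimed description: its left-hand side is the generator of \(\Ker Q\) associated to the AR \((n+2)\)-angle, while its right-hand side is the corresponding generator from Theorem~\ref{thm:KerQ-T-locally-bounded-has-right-almost-split-maps-description}.

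I expect the main obstacle to be purely bookkeeping: keeping track of signs and indices in the telescoping alternating sum, together with pinning down precisely that \(\Cok \tensor*[]{H}{_{\CT}}(\tensor[]{\beta}{_{1}})\iso \tensor[]{S}{_{Y}}\) (rather than some larger quotient). Neither step requires a new idea, but both are easy to slip on at the first attempt.
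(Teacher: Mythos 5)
Your proposal is correct and follows essentially the same route as the paper's proof: reduce via Theorem~\ref{thm:KerQ-T-locally-bounded-has-right-almost-split-maps-description} to the identity \(\tensor[]{\theta}{_{\CT}}([\tensor[]{S}{_{\tensor[]{T}{_{0}}}}]) = \sum_{i=0}^{n+1}(-1)^{i}[\tensor[]{T}{_{i}}\tensor[]{]}{_{\CT}}\), then apply Theorem~\ref{thm:X-index-additive-on-triangles-with-error-term} to each triangle in the tower \eqref{eqn:tower-of-triangles-for-AR-angle}, identifying \(\Im \tensor*[]{H}{_{\CT}}(\tensor[]{\gamma}{_{0.5}}) = \Cok \tensor*[]{H}{_{\CT}}(\tensor[]{\beta}{_{1}}) = \tensor[]{S}{_{\tensor[]{T}{_{0}}}}\) and \(\Im \tensor*[]{H}{_{\CT}}(\tensor[]{\gamma}{_{i-0.5}}) = 0\) for \(2\leq i\leq n\), and taking the telescoping alternating sum. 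The paper's argument is exactly this, with the same citations in substance.
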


\begin{proof}

The subcategory \(\CT\) has right almost split morphisms because Auslander-Reiten \((n+2)\)-angles exist, or by \cite[Props.\ 2.10, 2.11]{IyamaYoshino-mutation-in-tri-cats-rigid-CM-mods}. 
In addition, \(\CT\) is locally bounded by assumption (see Setup~\ref{setup:4}). Hence, Theorem~\ref{thm:KerQ-T-locally-bounded-has-right-almost-split-maps-description} applies. 

Thus, it suffices to show  
\begin{equation}
\label{eqn:theta-of-simple-formula}
 \sum_{i=0}^{n+1}(-1\tensor[]{)}{^{i}}[\tensor[]{T}{_{i}}\tensor[]{]}{_{\CT}} 
 	= \tensor[]{\theta}{_{\CT}}([\tensor[]{S}{_{\tensor[]{T}{_{0}}}}]).
\end{equation}
for \(\tensor[]{T}{_{0}}\in\ind\CT\), where 
\(\tensor[]{S}{_{\tensor[]{T}{_{0}}}}\in\rmod{\CT}\) is the simple top of \(\CT(-,\tensor[]{T}{_{0}})\) and 
\[
\begin{tikzcd}[ampersand replacement=\&]
	\tensor[]{T}{_{n+1}} \arrow{r}\& \tensor[]{T}{_{n}} \arrow{r}\& \cdots \arrow{r}\& \tensor[]{T}{_{1}} \arrow{r}\& \tensor[]{T}{_{0}}
\end{tikzcd}
\] 
is an Auslander-Reiten \((n+2)\)-angle with associated tower of triangles 
\eqref{eqn:tower-of-triangles-for-AR-angle}. 
The triangle 
\[
\begin{tikzcd}[column sep=1cm]
	\tensor*[]{X}{_{1.5}} \arrow{r}{\tensor[]{\alpha}{_{1.5}}}& \tensor[]{T}{_{1}}\arrow{r}{\tensor[]{\beta}{_{1}}}& \tensor[]{T}{_{0}} \arrow{r}{\tensor[]{\gamma}{_{0.5}}}& \sus \tensor*[]{X}{_{1.5}}
\end{tikzcd} 
\]
satisfies  
\(\Im \tensor*[]{H}{_{\CT}}(\tensor[]{\gamma}{_{0.5}}) 
	= \Cok \tensor*[]{H}{_{\CT}}(\tensor[]{\beta}{_{1}}) 
	= \tensor[]{S}{_{\tensor[]{T}{_{0}}}}\),
because \(\tensor[]{\beta}{_{1}}\) is right almost split in \(\CT\); 
see \cite[Cor.\ 2.6]{Auslander-Rep-theory-of-Artin-algebras-II}. 
Thus, by Theorem~\ref{thm:X-index-additive-on-triangles-with-error-term}, we have that 
\begin{equation}\label{eqn:star-from-proof-of-thm:Ker-Q-in-terms-of-AR-n-plus-2-angles}
	\newindx{\CT}(\tensor*[]{X}{_{1.5}}) - \newindx{\CT}(\tensor[]{T}{_{1}}) + \newindx{\CT}(\tensor[]{T}{_{0}})
	= \tensor[]{\theta}{_{\CT}}([\tensor[]{S}{_{\tensor[]{T}{_{0}}}}]).
\end{equation}
Set \(\tensor*[]{X}{_{n+0.5}} \deff \tensor[]{T}{_{n+1}}\). 
For \(2\leq i \leq n\), the triangle 
\[
\begin{tikzcd}[column sep=1.3cm]
	\tensor[]{X}{_{i+0.5}} \arrow{r}{\tensor[]{\alpha}{_{i+0.5}}}& \tensor[]{T}{_{i}}\arrow{r}{\tensor[]{\beta}{_{i}}}& \tensor[]{X}{_{i-0.5}}\arrow{r}{\tensor[]{\gamma}{_{i-0.5}}}& \sus \tensor[]{X}{_{i+0.5}}
\end{tikzcd}
\]
satisfies \(\Im \tensor*[]{H}{_{\CT}}(\tensor[]{\gamma}{_{i-0.5}}) = 0\), because \(\tensor[]{\beta}{_{i}}\) is a right \(\CT\)-approximation. 
So, by Theorem~\ref{thm:X-index-additive-on-triangles-with-error-term}, we also have that 
\stepcounter{equation}
\begin{equation}
\label{eqn:starstar-from-proof-of-thm:Ker-Q-in-terms-of-AR-n-plus-2-angles}
\tag*{\((\theequation\tensor[]{)}{_{i}}\)}
\newindx{\CT}(\tensor[]{X}{_{i+0.5}}) - \newindx{\CT}(\tensor[]{T}{_{i}}) + \newindx{\CT}(\tensor[]{X}{_{i-0.5}}) 
	= 0
\end{equation}
for each \(2\leq i \leq n\). 
Hence, the sum 
\[
\mbox{\eqref{eqn:star-from-proof-of-thm:Ker-Q-in-terms-of-AR-n-plus-2-angles}}
	+ \displaystyle\sum_{i=2}^{n}(-1\tensor[]{)}{^{i-1}} 	
\mbox{\ref{eqn:starstar-from-proof-of-thm:Ker-Q-in-terms-of-AR-n-plus-2-angles}}
\]
is precisely 
\eqref{eqn:theta-of-simple-formula}, which was to be shown. 
\end{proof}

\begin{rem}
\label{rem:K0CX-is-quotient-of-split-G-group-of-T}

It follows from Theorems~\ref{thm:analogue-of-PPPP-Prop-4-11} and \ref{thm:Ker-Q-in-terms-of-AR-n-plus-2-angles} that the group 
\(
\tensor[]{K}{_{0}}(\CC,\tensor[]{\BE}{_{\CX}},\tensor[]{\fs}{_{\CX}}) 
	\iso \tensor[]{K}{_{0}}(\CC,\tensor[]{\BE}{_{\CT}},\tensor[]{\fs}{_{\CT}}) / \Ker Q 
\)
is isomorphic to 
\[
\left.\tensor*[]{K}{_{0}^{\sp}}(\CT) \middle/ 
		\Braket{ \sum_{i=0}^{n+1}(-1\tensor[]{)}{^{i}}[\tensor[]{T}{_{i}}\tensor*[]{]}{_{\CT}^{\sp}} | 
		\begin{aligned}
		&\tensor[]{T}{_{0}}\in\ind\CY \text{ \emph{and} } 
			\begin{tikzcd}[column sep=0.5cm,ampersand replacement=\&]
			\tensor[]{T}{_{n+1}} \arrow{r}\& \tensor[]{T}{_{n}} \arrow{r}\& \cdots \arrow{r}\& \tensor[]{T}{_{1}} \arrow{r}\& \tensor[]{T}{_{0}}
			\end{tikzcd}\\[-7pt]
		&\text{\emph{is an Auslander-Reiten} } (n+2)\text{\emph{-angle in} }\CT
		\end{aligned}
		}.\right.
\]
\end{rem}

We finish this section with a remark on how our considerations here relate to the modified Caldero-Chapoton map of Holm--J{\o{}}rgensen \cite{HolmJorgensen-generalized-friezes-and-a-modified-caldero-chapoton-map-depending-on-a-rigid-object-1}, \cite{HolmJorgensen-generalized-friezes-and-a-modified-caldero-chapoton-map-depending-on-a-rigid-object-2} and the \(\CX\)-Caldero-Chapoton map from \cite{JorgensenShah-grothendieck-groups-of-d-exangulated-categories-and-a-modified-CC-map}.

\begin{rem}
\label{remark26}

In addition to assuming the conditions of Setup~\ref{setup:4}, in this remark we also suppose the following:
\begin{enumerate}[(\roman*)]
	\item \(\field\) is algebraically closed;
	\item \(n=2\);
	\item \(\CC\) is 2-Calabi-Yau;
	\item \(\CX\) is functorially finite; and
	\item \(\CT\) is part of a cluster structure in the sense of \cite[p.\ 1039]{BuanIyamaReitenScott-cluster-structures-for-2-calabi-yau-categories-and-unipotent-groups}. 
\end{enumerate}
Thus, for \(Y\in\ind\CY\), there is a pair of exchange triangles 
\[
\begin{tikzcd}%[column sep=0.5cm]
	Y \arrow{r}& A' \arrow{r}& \tensor*[]{Y}{^{*}} \arrow{r}& \sus Y 
\end{tikzcd}
\hspace{0.5cm}\text{and}\hspace{0.5cm}
\begin{tikzcd}%[column sep=0.5cm]
	\tensor*[]{Y}{^{*}}\arrow{r} & A \arrow{r}& Y \arrow{r}& \sus \tensor*[]{Y}{^{*}}
\end{tikzcd}
\]
that can be glued to obtain an Auslander-Reiten 4-angle 
\[
\begin{tikzcd}[column sep=0.7cm]
&A' \arrow{dr}\arrow{rr}&& A\arrow{dr} & \\
Y\arrow{ur} &&\tensor*[]{Y}{^{*}}\arrow[rightsquigarrow]{ll}\arrow{ur}&&Y\arrow[rightsquigarrow]{ll}
\end{tikzcd}
\]
in \(\CT\). 
It follows from Theorem~\ref{thm:Ker-Q-in-terms-of-AR-n-plus-2-angles} that 
\[
\Ker Q 
	= \Braket{  [A'\tensor[]{]}{_{\CT}} -[A\tensor[]{]}{_{\CT}}  | 
	\begin{aligned}
		&\text{there are exchange triangles }
		\begin{tikzcd}[column sep=0.5cm,ampersand replacement=\&]
			Y\arrow{r} \& A' \arrow{r} \& \tensor*[]{Y}{^{*}} \arrow{r}\& \sus Y
		\end{tikzcd} \\[-7pt]
		&\text{and }
		\begin{tikzcd}[column sep=0.5cm,ampersand replacement=\&]
			\tensor*[]{Y}{^{*}}\arrow{r} \& A \arrow{r} \& Y \arrow{r}\& \sus \tensor*[]{Y}{^{*}}
		\end{tikzcd} 
		\text{for } Y\in\ind\CY
	\end{aligned}
	}.
\]
In particular, this is isomorphic to the subgroup
\[
N
	= \Braket{  [A'\tensor*[]{]}{_{\CT}^{\sp}} -[A\tensor*[]{]}{_{\CT}^{\sp}}  | 
	\begin{aligned}
		&\text{there are exchange triangles }
		\begin{tikzcd}[column sep=0.5cm,ampersand replacement=\&]
			Y\arrow{r} \& A' \arrow{r} \& \tensor*[]{Y}{^{*}} \arrow{r}\& \sus Y
		\end{tikzcd} \\[-7pt]
		&\text{and }
		\begin{tikzcd}[column sep=0.5cm,ampersand replacement=\&]
			\tensor*[]{Y}{^{*}}\arrow{r} \& A \arrow{r} \& Y \arrow{r}\& \sus \tensor*[]{Y}{^{*}}
		\end{tikzcd} 
		\text{for } Y\in\ind\CY
	\end{aligned}
	}
\]
of \(\tensor*[]{K}{_{0}^{\sp}}(\CT)\), 
using the isomorphism 
\(\tensor*[]{K}{_{0}^{\sp}}(\CT) \to \tensor[]{K}{_{0}}(\CC,\tensor[]{\BE}{_{\CT}},\tensor[]{\fs}{_{\CT}})\) sending \([T\tensor*[]{]}{_{\CT}^{\sp}}\) to \([T\tensor[]{]}{_{\CT}}\) (see \eqref{eqn:isomorphism-of-grothendieck-groups-for-cluster-tilting-T} from Section~\ref{sec:introduction} or Theorem~\ref{thm:analogue-of-PPPP-Prop-4-11}, or \cite[Prop.\ 4.11]{PadrolPaluPilaudPlamondon-associahedra-for-finite-type-cluster-algebras-and-minimal-relations-between-g-vectors}).
Therefore, we have isomorphisms
\begin{equation}
\label{eqn:chain-of-isomorphisms-for-CC-map}
\begin{tikzcd}
\tensor*[]{K}{_{0}^{\sp}}(\CT)/N \arrow{r}& \tensor[]{K}{_{0}}(\CC,\tensor[]{\BE}{_{\CT}},\tensor[]{\fs}{_{\CT}}) / \Ker Q \arrow{r}{\ol{\tensor*[]{Q}{_{\CX}^{\CT}}}}& \tensor[]{K}{_{0}}(\CC,\tensor[]{\BE}{_{\CX}},\tensor[]{\fs}{_{\CX}}). 
\end{tikzcd}
\end{equation}
The subgroup \(N\) was defined in \cite[Def.\ 2.4]{HolmJorgensen-generalized-friezes-and-a-modified-caldero-chapoton-map-depending-on-a-rigid-object-2}. 
Although in general the subgroup \(N\) depends on the choice of complement \(\CY\) to \(\CX\), the isomorphisms of \eqref{eqn:chain-of-isomorphisms-for-CC-map} show that the quotient \(\tensor*[]{K}{_{0}^{\sp}}(T) / N\) is independent of this choice. 
This was previously only known in the cases when \(\CC\) is the cluster category associated to a Dynkin-type \(\BA\) quiver \cite[Prop.~3.2.3]{Pescod-phd-thesis} or a Dynkin-type \(\BD\) quiver \cite[Thm.~5.1.7]{Pescod-phd-thesis}.
The authors thank Yann Palu for asking about the dependency of \(N\) on \(\CY\).

In \cite{HolmJorgensen-generalized-friezes-and-a-modified-caldero-chapoton-map-depending-on-a-rigid-object-1} and \cite{HolmJorgensen-generalized-friezes-and-a-modified-caldero-chapoton-map-depending-on-a-rigid-object-2}, 
Holm--J{\o{}}rgensen generalised the map of Caldero--Chapoton \cite{CalderoChapoton-cluster-algebras-as-hall-algebras-of-quiver-representations} in two ways, by defining their \emph{modified Caldero-Chapoton map}. 
This generalisation depends only on a rigid object and can take values in a general commutative ring \(A\). 
The modified Caldero-Chapoton map depends on several functions. 
In particular, it utilises a map \(\alpha \colon \obj(\CC) \to A\) given by \(\alpha(X) = \eps q ([\tensor*[]{T}{_{0}^{X}}\tensor*[]{]}{_{\CT}^{\sp}}-[\tensor*[]{T}{_{1}^{X}}\tensor*[]{]}{_{\CT}^{\sp}})\), where 
\(\eps \colon \tensor*[]{K}{_{0}^{\sp}}(\CT)/N \to A\) is a map 
satisfying some properties, \(q\colon \tensor*[]{K}{_{0}^{\sp}}(\CT) \to \tensor*[]{K}{_{0}^{\sp}}(\CT)/N\) is the canonical quotient, and \([\tensor*[]{T}{_{0}^{X}}\tensor*[]{]}{_{\CT}^{\sp}}-[\tensor*[]{T}{_{1}^{X}}\tensor*[]{]}{_{\CT}^{\sp}}\) is the classical Palu index of \(X\) with respect to \(\CT\).

When \(\CX = \CT\) is cluster tilting, we see that \(N = 0\) and \(q\) is the identity. Thus, in this special case, we input into \(\eps\) the genuine classical index of \(X\) with respect to \(\CT\). 
In the general case, although \(\tensor*[]{K}{_{0}^{\sp}}(\CT)/N\) works as the domain for \(\eps\), it does not explain \emph{why} it is needed. 
This is now explained by the isomorphism 
\(\tensor*[]{K}{_{0}^{\sp}}(\CT)/N \to \tensor[]{K}{_{0}}(\CC,\tensor[]{\BE}{_{\CX}},\tensor[]{\fs}{_{\CX}})\) given in \eqref{eqn:chain-of-isomorphisms-for-CC-map} and that the Grothendieck group \(\tensor[]{K}{_{0}}(\CC,\tensor[]{\BE}{_{\CX}},\tensor[]{\fs}{_{\CX}})\) is capturing the indices of objects in \(\CC\) with respect to \(\CX\). These observations led to the \emph{\(\CX\)-Caldero-Chapoton map} defined in \cite[Sec.\ 5]{JorgensenShah-grothendieck-groups-of-d-exangulated-categories-and-a-modified-CC-map}.

\end{rem}

{\setstretch{1}\begin{acknowledgements}
This work was supported by a DNRF Chair from the Danish National Research Foundation (grant DNRF156), by a Research Project 2 from the Independent Research Fund Denmark (grant 1026-00050B), by the Aarhus University Research Foundation (grant AUFF-F-2020-7-16), and by the Engineering and Physical Sciences Research Council (grant EP/P016014/1). 
The second author is also grateful to the London Mathematical Society for funding through an Early Career Fellowship with support from Heilbronn Institute for Mathematical Research (grant ECF-1920-57). 
\end{acknowledgements}}

%%%%%%%%%%%%%%%%%%%%%%%%%%%%%%%%%%%%%%%%%%%%%%%%
%%%%%%%%%%%%%%%%%%%%%%%%%%%%%%%%%%%%%%%%%%%%%%%%

\bibliographystyle{mybst}
\bibliography{references}
\end{document}